\newcommand\be{\begin{equation}}
\newcommand\ee{\end{equation}}
\newcommand\bea{\begin{eqnarray}}
\newcommand\eea{\end{eqnarray}}
\newcommand\bi{\begin{itemize}}
\newcommand\ei{\end{itemize}}
\newcommand\ben{\begin{enumerate}}
\newcommand\een{\end{enumerate}}
\newtheorem{thm}{Theorem}[section]
\newtheorem{conj}[thm]{Conjecture}
\newtheorem{cor}[thm]{Corollary}
\newtheorem{lem}[thm]{Lemma}
\newtheorem{prop}[thm]{Proposition}
\newtheorem{rek}[thm]{Remark}
\newcommand{\twocase}[5]{#1 \begin{cases} #2 & \text{#3}\\ #4
&\text{#5} \end{cases}   }
\newcommand{\N}{\mathbb{N}}
\newcommand{\mattwo}[4]
{\left(\begin{array}{cc}
                        #1  & #2   \\
                        #3 &  #4
                          \end{array}\right) }
\newcommand{\vectwo}[2]
{\left(\begin{array}{c}
                        #1    \\
                        #2
                          \end{array}\right) }
\newcommand{\vectthree}[3]
{\left(\begin{array}{c}
                        #1  \\
                        #2  \\
                        #3
                          \end{array}\right) }
\numberwithin{equation}{section}
\newcounter{NLCTR}
\newenvironment{newlist}
{\begin{list}{{\bf \arabic{NLCTR}.}}
{
\usecounter{NLCTR}
\setlength{\parsep}{0.05in} \setlength{\itemsep}{\parskip}
\setlength{\leftmargin}{0.0in} \setlength{\rightmargin}{0.08in}
\setlength{\listparindent}{0in}
\setlength{\labelwidth}{-0.045in}
\setlength{\labelsep}{0.05in} \setlength{\itemindent}{0in}}}
{\end{list}}
\begin{document}

\title{Virus Dynamics on Starlike Graphs}

\author[Becker]{Thealexa Becker}
\email{tbecker@smith.edu}
\address{Department of Mathematics, Smith College, Northampton, MA 01063}

\author[Greaves-Tunnell]{Alexander Greaves-Tunnell}
\email{ahg1@williams.edu}
\address{Department of Mathematics and Statistics, Williams College, Williamstown, MA 01267}

\author[Kontorovich]{Aryeh Kontorovich}\email{lkontor@gmail.com}
\address{Department of Computer Science, Ben Gurion University of the Negev, Israel}

\author[Miller]{Steven J. Miller}
\email{sjm1@williams.edu, Steven.Miller.MC.96@aya.yale.edu} \address{Department of Mathematics and Statistics, Williams College, Williamstown, MA 01267}

\author[Ravikumar]{Pradeep Ravikumar}\email{Steven.J.Miller@williams.edu}
\address{Department of Computer Science, University of Texas Austin, Austin, TX 78701}

\author[Shen]{Karen Shen}\email{shenk@stanford.edu}
\address{Department of Mathematics, Stanford University,
Stanford, CA 94305}

\subjclass[2010]{94C15  (primary),  (secondary) 82B26, 92E10}

\keywords{virus propagation, star networks, SIS model}

\date{\today}

\thanks{The first, second and sixth named authors were partially supported by Williams College and NSF grant DMS0850577, and the fourth named author was partly supported by NSF grant DMS0970067. It is a pleasure to thank Andres Douglas Castroviejo, Amitabha Roy, and our colleagues from the Williams College 2011 SMALL REU program for many helpful conversations.}

\begin{abstract} The field of epidemiology has presented fascinating and relevant questions for mathematicians, primarily concerning the spread of viruses in a community. The importance of this research has greatly increased over time as its applications have expanded to also include studies of electronic and social networks and the spread of information and ideas. We study virus propagation on a non-linear hub and spoke graph (which models well many airline networks). We determine the long-term behavior as a function of the cure and infection rates, as well as the number of spokes $n$. For each $n$ we prove the existence of a critical threshold relating the two rates. Below this threshold, the virus always dies out; above this threshold, all non-trivial initial conditions iterate to a unique non-trivial steady state. We end with some generalizations to other networks.
\end{abstract}

\maketitle

\tableofcontents

%%%%%%%%%%%%%%%%%%%%%%%%%%%%%%%%%%%%%%%%%%%%%%%%%%%%%%%%%%%%%%%%%%%%%%%%%%%%%%%%%%%%%%%%%%%%%%%%%%%%%%%%%%%%%%%%%%%%%%%%%%%%%%%%%%%%%%%%
%%%%%%%%%%%%%%%%%%%%%%%%%%%%%%%%%%%%%%%%%%%%%%%%%%%%%%%%%%%%%%%%%%%%%%%%%%%%%%%%%%%%%%%%%%%%%%%%%%%%%%%%%%%%%%%%%%%%%%%%%%%%%%%%%%%%%%%%
%%%%%%%%%%%%%%%%%%%%%%%%%%%%%%%%%%%%%%%%%%%%%%%%%%%%%%%%%%%%%%%%%%%%%%%%%%%%%%%%%%%%%%%%%%%%%%%%%%%%%%%%%%%%%%%%%%%%%%%%%%%%%%%%%%%%%%%%

\section{Introduction}

\subsection{Previous Work}

The general problem of studying the propagation of a node-state within a large interconnected network of nodes has a wide range of applications across domains, such as studying computer virus propagation in computer science, studying the penetration of a meme or product in marketing and sociology, and studying the propagation of an infection in epidemiology. Many of the earliest investigations \cite{Ba,KeWh,McK} assume a homogenous network, where each node has identical connections to all other nodes: for such networks, the rate of virus propagation was then shown to be determined by the density of infected nodes. While mathematically tractable, the results in \cite{FFF,RiDo, RiFoIa} also suggested that such homogenous models fail to represent many real networks. There has thus also been work on alternatives to this strict homogeneous model. For instance, \cite{P-SV1,P-SV2,P-SV3,P-SV4,MP-SV} study power law networks, where the probability of a node having $k$ neighbors is proportional to $k^{-\gamma}$ for some exponent $\gamma > 0$. Although more realistic, \cite{WKE} shows that even this model is not well-suited for many real networks. Moreover, an issue with these results is that their models, describing the propagation of node-states, themselves are dependent on the network topology. In contrast to these, \cite{WDWF} proposes a more natural topology-agnostic model that relies on local node interactions. Specifically, their proposed SIS (Susceptible Infected Susceptible) model is a discrete-time model where each node is either Susceptible (S) or Infected (I). A susceptible node is currently healthy, but at any time step can be infected by its infected neighbors. At any time step moreover, an infected node can be cured and go back to being susceptible. The model parameters are $\beta$, the probability at any time step that an infected node infects its neighbors, and $\delta$, the probability at any time step that an infected node is cured. A central set of questions given this model for propagation of a node-state through the network are:\\

\begin{enumerate}
\item Given a set of model parameters and a particular initial state, does the system then reach a steady state?\\

\item If the system does reach a steady state, what are the characteristics of that state?\\

\item What is the dynamical behavior (rate of convergence) of the system?\\
\end{enumerate}

For the SIS model, Wang et al. \cite{WDWF} gave a heuristic argument for a sufficient criterion for the node infection probabilities to converge to a trivial solution, so that the infection dies out. Using a reasonable approximation to eliminate lower order terms, they conjecture a sufficient condition for the virus to die out. For star graphs, this condition is $b \le (1-a)/\sqrt{n}$, where $a = 1 - \delta$ and $b = \beta$. One of the main contributions of this paper making this argument rigorous. Indeed, given the nonlinear coupled dynamics of the SIS model, it is typically intractable to argue rigorously about asymptotic state characteristics. But for star graphs, we are able to show that the SIS model exhibits phase transition behavior, and moreover that this threshold is both necessary and sufficient. Thus, below this threshold the virus dies out, and above the system converges to a non-trivial steady state {\em independent} of the initial state (provided only that the initial state is non-trivial). One consequence of this is that even if a single spoke node is infected initially, so long as the model parameters lie beyond the phase transition point, the infection will not die out (i.e., the node infection probabilities will not converge to the trivial point).  We prove our results through a novel two-step argument, by first reducing the problem to one with a smaller graph size, and then applying the intermediate value theorem to the dynamics over the reduced graph.

% Furthermore, the method of eigenvalues employed in \cite{WDWF} --- and also here --- yields an epidemic threshold that is independent of the underlying topology of the network and tends to zero for infinite power-law graphs. Lastly, \cite{WDWF} showed that below the model's epidemic threshold, the infected nodes in the network decay exponentially. One interesting question posed in that work concerns the steady state of a network, and it is the existence and the convergence of the model to this steady state that we study extensively in our paper in star graphs.

\subsection{Problem Setup}

Y. Wang, C. Deepayan, C. Wang and  C. Faloutsos \cite{WDWF} proposed the following propagation model. Denote by $\beta$, the probability at any time step that an infected node infects its neighbors, and by $\delta$, the probability at any time step that an infected node is cured.

If $p_{i, t}$ is the probability a node $i$ is infected at time $t$, the SIS model is governed by the following equation:
\be\label{eq:origmodel}
1-p_{i, t} \ = \ \left(1-p_{i, t-1}\right)\zeta_{i, t}+\delta p_{i, t}\zeta_{i, t},
\ee
where $\zeta_{i, t}$ is the probability that a node $i$ is not infected by its neighbors at time $t$. We can express $\zeta_{i, t}$ as follows:
\be\label{eq:zeta}
\zeta_{i, t} \ = \ \prod_{j \sim i}p_{j, t-1}\left(1-\beta\right)+\left(1-p_{j, t-1}\right) \ = \ \prod_{j \sim i}(1-\beta p_{j, t-1})
\ee
(where $j \sim i$ means $i$ and $j$ are neighbors --- i.e., are connected by an edge of the graph).
Given the non-linear coupled form of this system, a closed form expression for $p_{i, t}$ for the general topology case seems infeasible.

We therefore consider a specific graph topology, that of a star graph (see Figure \ref{fig:stargraph}).
\begin{figure}
\includegraphics[height=45mm]{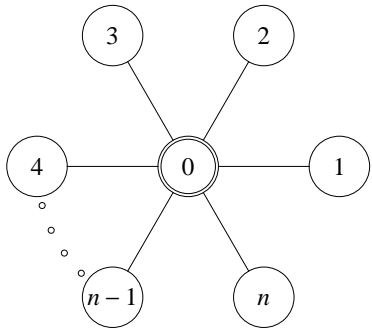}
\caption{\label{fig:stargraph} Star graph with 1 central hub and $n$ spokes.}
\end{figure}
This is a graph in which there is a single ``hub'' node which is connected to all the other nodes, the ``spokes.'' Suppose the graph has $n+1$ nodes: the hub is numbered $0$ and the spokes are numbered $1$ through $n$.

\begin{prop}
For any initial configuration, as time evolves all the spokes converge to a common behavior.
\end{prop}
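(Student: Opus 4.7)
The plan is to exploit the structural symmetry of the star graph: every spoke $i \in \{1, \ldots, n\}$ has exactly one neighbor, namely the hub $0$. Consequently the product in~\eqref{eq:zeta} collapses to a single factor and
\[
\zeta_{i,t} \;=\; 1 - \beta\, p_{0,t-1} \;=:\; \zeta_t
\]
depends only on $t$ (through the hub probability) and not on the spoke label. Substituting this common value into~\eqref{eq:origmodel} and solving for $p_{i,t}$ produces a one-step recursion $p_{i,t} = \Phi_t(p_{i,t-1})$ in which the map $\Phi_t$ depends only on $\zeta_t$ together with the fixed parameters $\beta$ and $\delta$, and in particular is \emph{the same function} for every spoke.

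Next, I would show that $\Phi_t$ is a contraction, uniformly in $t$. A short algebraic manipulation of~\eqref{eq:origmodel} will yield, for any pair of spokes $i, j$,
\[
p_{i,t} - p_{j,t} \;=\; c_t \,\bigl(p_{i,t-1} - p_{j,t-1}\bigr),
\]
where $c_t$ is a scalar depending only on $\zeta_t$ and $\delta$ (and in particular not on $i, j$ or on the values $p_{i,t-1}, p_{j,t-1}$). Since $0 \le \zeta_t \le 1$ and $0 < \delta \le 1$, the explicit form of $c_t$ will satisfy $c_t \le 1 - \delta < 1$, and iterating the identity gives
\[
|p_{i,t} - p_{j,t}| \;\le\; (1-\delta)^{t}\,|p_{i,0} - p_{j,0}| \;\longrightarrow\; 0
\]
geometrically as $t \to \infty$, independent of the initial configuration. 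Thus pairwise discrepancies between spokes are forgotten exponentially fast, which is the sense in which the spokes converge to a common behavior.

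The only obstacle of any substance is carrying out the algebraic solve producing $c_t$ from the implicit form~\eqref{eq:origmodel}, but since that equation is affine in $p_{i,t}$ and in $p_{i,t-1}$ the manipulation is routine. Note that the proposition does \emph{not} assert existence of a common limit, only that any initial asymmetry among the spokes dies out; identifying the shared long-run behavior as a trivial or non-trivial fixed point will be the content of the subsequent theorems.
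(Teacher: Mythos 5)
Your approach is essentially the paper's: because each spoke's only neighbor is the hub, $\zeta_{i,t}=1-\beta p_{0,t-1}$ is spoke-independent, the update rule \eqref{eq:origmodel} is affine in $p_{i,t}$ and $p_{i,t-1}$, and subtracting the equations for two spokes $i,j$ gives $p_{i,t}-p_{j,t}=c_t\,(p_{i,t-1}-p_{j,t-1})$ with $c_t=\frac{1-\beta p_{0,t-1}}{1+\delta\,(1-\beta p_{0,t-1})}$, which is exactly the factor appearing in \eqref{eq:pijt}; iterating and bounding $c_t$ uniformly below $1$ is also what the paper does.

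One correction: your claimed bound $c_t\le 1-\delta$ is false in general. Writing $u=\zeta_t\in[0,1]$, the function $u\mapsto u/(1+\delta u)$ is increasing, so $c_t\le 1/(1+\delta)$, and this is the sharp uniform bound; for instance $u=1$, $\delta=\tfrac12$ gives $c_t=\tfrac23>1-\delta=\tfrac12$. This does not damage the argument, since any uniform bound strictly less than $1$ suffices: iterating yields $|p_{i,t}-p_{j,t}|\le (1+\delta)^{-t}\,|p_{i,0}-p_{j,0}|\to 0$. (Note also that the iteration really produces a product $\prod_{s\le t}c_s$ of time-dependent factors rather than a single factor raised to the $t$th power --- a point the paper's display \eqref{eq:pijt} likewise glosses over --- but since each factor is at most $1/(1+\delta)<1$ for $\delta\in(0,1)$, the geometric decay, uniform in the initial configuration, stands.)
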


\begin{proof}
\eqref{eq:origmodel} becomes
\bea
p_{0, t} & \ = \ & 1 - \left(1-p_{0, t-1}\right)\prod_{j = 1}^{n}\left(1-\beta_{j, t-1}\right)-\delta p_{0, t} \prod_{j = 1}^{n}\left(1-\beta p_{j, t-1}\right)\nonumber\\
p_{i, t} & \ = \ & 1 - \left(1-p_{i, t-1}\right)\left(1-\beta p_{0, t-1}\right) - \delta p_{i, t} \left(1-\beta p_{0, t-1}\right),
\quad 1 \ \le \  n \ \le \  n+1.
\eea

We can immediately observe that all the spokes assume identical values quite rapidly. We prove this below by showing that for $i, j \neq 0$, $\left|p_{i, t}-p_{j, t}\right| \rightarrow 0$ as $t\to\infty$. We have
\bea
p_{i, t} - p_{j, t} &\ = \ & \left(p_{i, t-1}-p_{j, t-1}\right)\left(1-\beta p_{0, t-1}\right)- \delta \left(p_{i, t}-p_{j, t}\right) \left(1-\beta p_{0, t-1}\right) \nonumber\\
& \ = \ & \left(\frac{1-\beta p_{0, t-1}}{1+\delta \left(1-\beta p_{0, t-1}\right)}\right) p_{i, t-1} - p_{j, t-1}.
\eea
Thus we have
\be
\label{eq:pijt}
\left|p_{i, t}-p_{j, t}\right|\ =\ \left(\frac{1-\beta p_{0, t-1}}{1+\delta \left(1-\beta p_{0, t-1}\right)}\right)^{t}\left|p_{i, 0}-p_{j, 0}\right|.
\ee
Since the quantity to the $t$\textsuperscript{th} power cannot stabilize at 1 as the denominator is at least $1+\delta$ and the numerator is at most 1, the right-hand side in (\ref{eq:pijt}) decays to $0$ as $t\to\infty$.
\end{proof}

An important consequence of this observation is that it allows us to simplify our model to a model in terms of $x_{t}$, the probability that the hub is infected, and $y_{t}$, the probability that a spoke is infected. These then evolve according to
\be
\vectwo{x_{t+1}}{y_{t+1}} \ = \ F\vectwo{x_{t}}{y_{t}},
\ee
where
\bea\label{eq:defnofmapF}
F(x, y) & \ = \ & \vectwo{f_{1}\left(x, y\right)}{f_{2}\left(x, y\right)} \ = \ \vectwo{1-\left(1-x\right)\left(1-\beta y\right)^{n}-\delta x \left(1-\beta y\right)^{n}}{1-\left(1-y\right)\left(1-\beta x\right) - \delta y \left(1-\beta x\right)} \nonumber\\ & \ = \ & \vectwo{1-\left(1-ax\right)\left(1-by\right)^{n}}{1-\left(1-ay\right)\left(1-bx\right)};
\eea
recall that we have defined $a := 1 - \delta$ and $b := \beta$ to simplify the algebra.

\subsection{Main Results and Consequences}

Our main result is the following.

\begin{thm}\label{thm:mainresult} Let $a,b \in (0,1)$ and $F$ as in \eqref{eq:defnofmapF} describes the limiting behavior of the spoke and star network.

\begin{newlist}

\item[I.] If $b \le (1-a)/\sqrt{n}$, then
	\begin{enumerate}
		\item[{\it (a)}] the unique fixed point of $F$ is  $(0,0)$, and
		\item[{\it (b)}] the system converges to this fixed point, that is, the virus dies out.
	\end{enumerate}
\item[II.] If $b > (1-a)/\sqrt{n}$ then, so long as the initial configuration is not the trivial point $(0,0)$,
 	\begin{enumerate}
		\item[{\it (a)}] $F$ has a unique, non-trivial fixed point $(x_f, y_f)$, where $x_f$ and $y_f$ are functions of $a,b$ and $n$, and
		\item[{\it (b)}] the system evolves to this non-trivial fixed point.
    \end{enumerate}
\end{newlist}
\end{thm}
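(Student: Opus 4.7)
The strategy is to reduce the fixed-point problem to a single-variable equation and then control the dynamics by monotone sandwiching. Writing a fixed point as $(x,y)$, the second coordinate equation $y = 1 - (1-ay)(1-bx)$ is linear in $x$ and yields $x = \phi_2(y) := y(1-a)/[b(1-ay)]$, while the first equation yields $x = \phi_1(y) := [1-(1-by)^n]/[1-a(1-by)^n]$. Thus fixed points in $[0,1]^2$ correspond to roots $y \in [0, y_*]$ of $G := \phi_1 - \phi_2$, where $y_* := b/\bigl((1-a)+ab\bigr)$ is the unique $y$ with $\phi_2(y_*) = 1$. Since $G(0)=0$, the question becomes whether there is a second root.

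Direct differentiation gives
\[
\phi_1'(y) \;=\; \frac{nb(1-a)(1-by)^{n-1}}{[1-a(1-by)^n]^2}, \qquad \phi_2'(y) \;=\; \frac{1-a}{b(1-ay)^2},
\]
so $\phi_1'$ is strictly decreasing and $\phi_2'$ is strictly increasing on $[0,y_*]$, making $G$ strictly concave there. A short calculation gives
\[
G'(0) \;=\; \frac{nb}{1-a} - \frac{1-a}{b} \;=\; \frac{nb^2 - (1-a)^2}{b(1-a)},
\]
whose sign flips precisely at $b = (1-a)/\sqrt{n}$. When $b \le (1-a)/\sqrt{n}$, $G'(0) \le 0$, and strict concavity forces $G(y) < 0$ on $(0, y_*]$; this gives Part I(a). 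When $b > (1-a)/\sqrt{n}$, $G'(0) > 0$ makes $G$ positive just above $0$, while $G(y_*) = \phi_1(y_*) - 1 < 0$, so the intermediate value theorem produces a root in $(0,y_*)$, and strict concavity of $G$ precludes any second root, giving Part II(a).

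For the dynamics I would use that $F$ is componentwise monotone on the invariant square $[0,1]^2$ (all four partials of $f_1,f_2$ are nonnegative there). Since $F(1,1) < (1,1)$ componentwise, the iterates $F^k(1,1)$ are monotone decreasing and converge to a fixed point. In Part I the only fixed point is $(0,0)$, and any trajectory in $[0,1]^2$ is sandwiched between $(0,0)$ and $F^k(1,1)$, yielding Part I(b).

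Part II(b) requires a nontrivial lower barrier. The Jacobian of $F$ at the origin is $\bigl(\begin{smallmatrix} a & nb \\ b & a \end{smallmatrix}\bigr)$, with leading eigenvalue $a + b\sqrt{n} > 1$ and positive eigenvector $(\sqrt{n},1)$. A Taylor expansion of $F$ at the origin shows that for all sufficiently small $\epsilon > 0$ the point $(x_-,y_-) := \epsilon(\sqrt{n},1)$ satisfies $F(x_-,y_-) \ge (x_-,y_-)$ componentwise. Hence $F^k(x_-,y_-)$ is componentwise increasing in $[0,1]^2$ and converges to a nontrivial fixed point, which by Part II(a) must be $(x_f,y_f)$. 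For an arbitrary non-trivial initial state $(x_0,y_0)$, one step makes both coordinates strictly positive (checked from the explicit formulas), so for $\epsilon$ small enough $(x_-,y_-) \le (x_1,y_1) \le (1,1)$, and the monotone sandwich
\[
F^k(x_-, y_-) \;\le\; F^k(x_1, y_1) \;\le\; F^k(1, 1)
\]
pins the middle sequence between two sequences whose limits are $(x_f,y_f)$ (the upper limit being forced to equal $(x_f,y_f)$ by uniqueness, since it dominates the lower limit). I expect the main obstacle to be the subsolution construction: translating the unstable linearization $a+b\sqrt{n} > 1$ into a nonlinear estimate $F(x_-,y_-) \ge (x_-,y_-)$ that holds uniformly on an explicit neighborhood of the origin, since this is where the phase-transition threshold manifests itself at the dynamical level.
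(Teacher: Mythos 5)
Your proposal is correct, and while your treatment of the fixed points (parts I(a), II(a)) is essentially the paper's own argument --- the same curves $\phi_1,\phi_2$, the same slope comparison $\phi_1'(0)=nb/(1-a)$ versus $\phi_2'(0)=(1-a)/b$, and the same concavity-plus-IVT count of intersections, merely repackaged as a single strictly concave function $G=\phi_1-\phi_2$ --- your treatment of the dynamics is genuinely different. The paper proves I(b) by a Mean Value Theorem linearization along the segment to the origin together with an eigenvalue bound (Lemma 3.1) yielding a contraction, and proves II(b) by first establishing a four-region decomposition induced by $\phi_1,\phi_2$ (invariance and monotone motion in Regions I and III) and then an inductive rectangle-squeezing argument. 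Your route replaces all of this with the single observation that $F$ is order-preserving on $[0,1]^2$, plus the supersolution $(1,1)$ and the subsolution $\epsilon(\sqrt{n},1)$; the paper's rectangle induction is in fact secretly the same order-preservation computation, but you bypass the region lemmas entirely. The step you flag as the ``main obstacle'' is not one: expanding gives
\begin{equation*}
F\bigl(\epsilon\sqrt{n},\,\epsilon\bigr)-\bigl(\epsilon\sqrt{n},\,\epsilon\bigr)\;=\;\epsilon\,(a+b\sqrt{n}-1)\vectwo{\sqrt{n}}{1}+O(\epsilon^2)
\end{equation*}
componentwise, with strictly positive leading coefficient since $a+b\sqrt{n}>1$; equivalently, for small $\epsilon$ the point $\epsilon(\sqrt{n},1)$ lies in the paper's Region I (where $x<f_1(x,y)$ and $y<f_2(x,y)$ by Lemma 4.1), so $F(x_-,y_-)\ge(x_-,y_-)$ holds outright. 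Two dividends of your approach worth noting: it needs no eigenvalue estimates at all, and your I(b) argument covers the boundary case $b=(1-a)/\sqrt{n}$ uniformly (only uniqueness of the fixed point is used), whereas the paper's contraction lemma is stated and used only for the strict inequality $b<(1-a)/\sqrt{n}$.
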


\begin{rek} In the notation of \cite{WDWF}, the critical threshold for the epidemic is $\beta/\delta < 1/\lambda_{1,A}$, where $\lambda_{1,A}$ is the largest eigenvalue of the adjacency matrix $A$ of the network. For a star graph with $n$ spokes connected to the central hub, $\lambda_{1,A} = \sqrt{n}$. Recalling our $a=1-\delta$ and $b=\beta$, their condition is equivalent to $b = (1-a)/\sqrt{n}$, exactly the condition we have.
\end{rek}

While previous work suggested the veracity of the above claim, it was through heuristic arguments and numerical simulations. We opted for a theoretical investigation, so as to lend additional plausibility to the general conjecture and to develop some techniques potentially useful for eventually resolving it.

The proof of this theorem is distributed over the next few sections. In \S\ref{sec:detfixedpointsF}, we prove parts I(a)
and II(a) by determining the fixed points of $F$ . Using convexity arguments, we show that the trivial fixed point is the only fixed point if $b \le (1-a)/\sqrt{n}$, but there is a unique, additional fixed point for larger $b$. We prove I(b) in \S\ref{sec:convbsmall}, namely that for $b \le (1-a)/\sqrt{n}$ (so $b$ is at or below the critical threshold) all initial configurations evolve to the trivial fixed point. The proof involves linearly approximating the map $F$ near the trivial fixed point and controlling the resulting eigenvalues. Finally, we show II(b) in \S\ref{sec:convblarge}, where we prove that all non-trivial initial configurations converge to the unique non-trivial fixed point when $b > (1-a)/\sqrt{n}$.  This last case is handled by noting that there is a natural partition of the domain $[0,1]^2$ of $F$ into four regions (see Figure \ref{fig:fourregions}), where the partitions are induced from functions related to determining the location of $F$'s fixed points. The analysis of $F$ on all of $[0,1]^2$ is complicated, but the restrictions of each region lead to $F$ having simple behavior in each region. We end with a discussion of the rate of convergence and the restriction of $F$ to these regions in \S\ref{sec:behavior}, and discuss some generalizations to other graph topologies.

%%%%%%%%%%%%%%%%%%%%%%%%%%%%%%%%%%%%%%%%%%%%%%%%%%%%%%%%%%%%%%%%%%%%%%%%%%%%%%%%%%%%%%%%%%%%%%%%%%%%%%%%%%%%%%%%%%%%%%%%%%%%%%%%%%%%%%%%
%%%%%%%%%%%%%%%%%%%%%%%%%%%%%%%%%%%%%%%%%%%%%%%%%%%%%%%%%%%%%%%%%%%%%%%%%%%%%%%%%%%%%%%%%%%%%%%%%%%%%%%%%%%%%%%%%%%%%%%%%%%%%%%%%%%%%%%%
%%%%%%%%%%%%%%%%%%%%%%%%%%%%%%%%%%%%%%%%%%%%%%%%%%%%%%%%%%%%%%%%%%%%%%%%%%%%%%%%%%%%%%%%%%%%%%%%%%%%%%%%%%%%%%%%%%%%%%%%%%%%%%%%%%%%%%%%

\section{Determination of Fixed Points of $F$}\label{sec:detfixedpointsF}

In this section we determine the behavior of the fixed points of the system as a function of the parameters $a, b$ and $n$, proving Theorem \ref{thm:mainresult}, I(a) and II(a). The proof relies on some auxiliary lemmas, which we first show. Specifically, the proofs look for partial fixed points, namely points where either the $x$ or $y$-coordinate is unchanged. We prove that the set of partial fixed points can be defined by continuous functions $\phi_{1}$ and $\phi_{2}$, whose intersections are the fixed points of the system (see Figure \ref{fig:phicurves}).

%\begin{prop}\label{prop:locfixedpoints} Consider the map $F$ from \eqref{eq:defnofmapF}.
%\begin{enumerate}
%\item If $b \le (1-a)/\sqrt{n}$ then the trivial fixed point is the unique fixed point in $[0,1]^2$.
%\item If $b > (1-a)/\sqrt{n}$ then there exists a unique non-trivial fixed point in $[0,1]^2$.
%\end{enumerate}
%\end{prop}

%\begin{prop}\label{prop:uniquenontrivial} For the map $F$ from \eqref{eq:defnofmapF},
%when $b > (1-a)/\sqrt{n}$ there exists a unique non-trivial fixed point in $[0,1]^2$.
%\end{prop}

%\subsection{Preliminary Results}

\begin{figure}
\includegraphics[height=45mm]{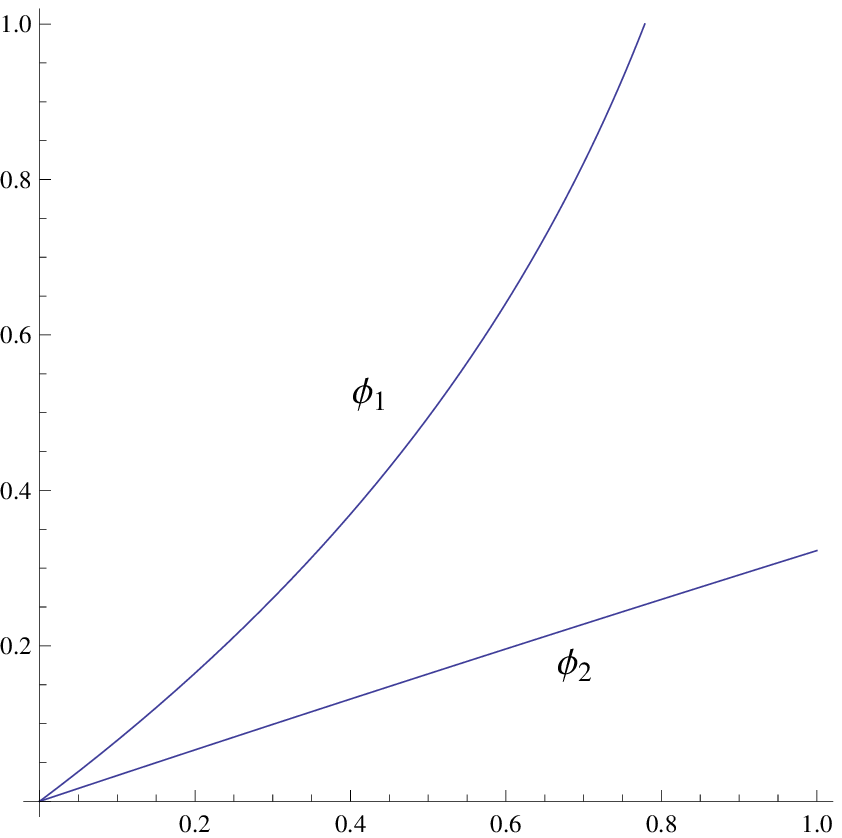}
\includegraphics[height=45mm]{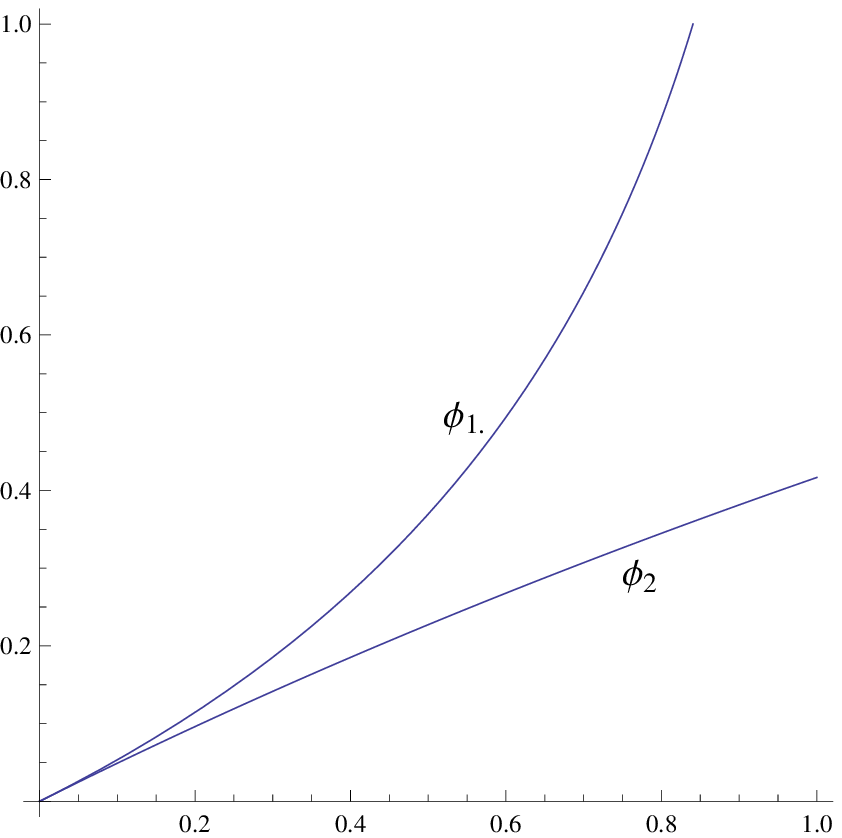}
\includegraphics[height=45mm]{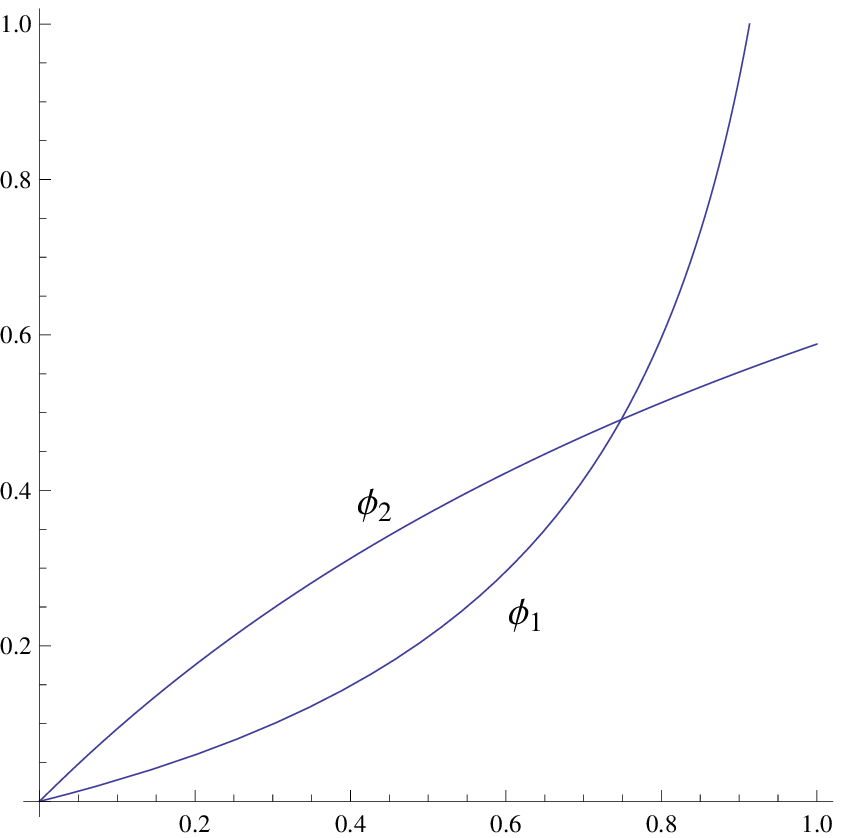}
\caption{\label{fig:phicurves} Partial fixed points from $\phi_1$ and $\phi_2$ when (from left to right) $b < (1-a)/\sqrt{n}$, $b = (1-a)/\sqrt{n}$, $b > (1-a)/\sqrt{n}$ ($b = 3, n = 4, a = .1, .4, .7$).}
\end{figure}

We begin with the following lemma characterizing these curves.

\begin{lem}\label{lem:phicurvechar} Consider the map $F$ given by \eqref{eq:defnofmapF}.

\begin{enumerate}

\item There exists a continuous, twice differentiable convex function $\phi_1: [0,1] \to [0,1]$ such that, for each $y \in [0,1]$, there is a $y' \in [0,1]$ with $F(\phi_1(y),y) = (\phi_1(y),y')$.

\item There exists a continuous, twice differentiable concave function $\phi_2: [0,1] \to [0,1]$ such that, for each $x \in [0,1]$, there is an $x' \in [0,1]$ with $F(x,\phi_2(x)) = (x',\phi_2(x))$.

\end{enumerate}
\end{lem}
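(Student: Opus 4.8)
The plan is to construct $\phi_1$ and $\phi_2$ directly from the two scalar equations defining partial fixed points, and then extract convexity/concavity from the sign of the second derivative obtained by implicit differentiation. I will carry out the argument for $\phi_1$ in detail; $\phi_2$ is entirely analogous (and in fact simpler, since the second component $f_2$ is affine in each variable separately).

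First I would write down the defining equation. A point $(x,y)$ has its $x$-coordinate fixed exactly when $x = f_1(x,y) = 1-(1-ax)(1-by)^n$, i.e.
\[
(1-ax)(1-by)^n \ = \ 1-x.
\]
For fixed $y \in [0,1]$ this is one equation in the single unknown $x$. I would solve it for $x$: rearranging gives $x(1 - a(1-by)^n) = 1 - (1-by)^n$, so
\[
\phi_1(y) \ := \ \frac{1-(1-by)^n}{1 - a(1-by)^n}.
\]
Here the denominator is bounded below by $1-a > 0$ for all $y\in[0,1]$, so $\phi_1$ is a well-defined, manifestly $C^\infty$ function of $y$ on $[0,1]$; one checks $\phi_1(0)=0$ and that $0 \le \phi_1(y) \le 1$ on $[0,1]$ (the numerator and denominator are both nonnegative, and the numerator is no larger since $a<1$). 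By construction $F(\phi_1(y),y)$ has first coordinate $\phi_1(y)$, with $y' := f_2(\phi_1(y),y) \in [0,1]$ the (automatically in-range) second coordinate. This proves everything in part (1) except convexity.

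For convexity I would substitute $u = u(y) := 1-(1-by)^n$, so that $u$ is an increasing function of $y$ with $u \in [0,1]$ and $\phi_1 = u/(1-a(1-u)) = u/(1-a+au)$. As a function of $u$, $g(u) := u/(1-a+au)$ has $g'(u) = (1-a)/(1-a+au)^2 > 0$ and $g''(u) = -2a(1-a)/(1-a+au)^3 < 0$, so $g$ is increasing and concave in $u$. Meanwhile $u(y) = 1-(1-by)^n$ is increasing; its concavity is the point to check: $u''(y) = -n(n-1)b^2(1-by)^{n-2}$, which is $\le 0$ for $n\ge 1$, so $u$ is concave in $y$ as well. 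Then $\phi_1(y) = g(u(y))$ is a composition of an increasing concave map with a concave map — and one must be slightly careful here, since "concave $\circ$ concave" is not automatically concave. The clean way is just to compute $\phi_1''$ directly via the chain rule: $\phi_1'' = g''(u)\,(u')^2 + g'(u)\,u''$. With $g'' < 0$, $g' > 0$, $u'' \le 0$ this gives $\phi_1'' \le 0$, i.e. $\phi_1$ is \emph{concave}.

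This sign is the opposite of what the lemma asserts, which tells me the intended parametrization is the transposed one: $\phi_1$ should be read as a curve in the $(x,y)$-plane expressing $y$ as a function of $x$ (equivalently, it is the graph $\{(\phi_1(y),y)\}$ viewed with $x$ horizontal), and concavity of $y \mapsto \phi_1(y)$ is exactly convexity of the graph as a curve $x \mapsto \phi_1^{-1}(x)$ — since $\phi_1$ is strictly increasing (as $g' u' > 0$ shows $\phi_1' > 0$), it is invertible, and the inverse of an increasing concave function is increasing and convex. So the final step is: invert $\phi_1$ on its range, observe the inverse is $C^2$ (as $\phi_1' > 0$ everywhere), increasing, and convex, and rename it $\phi_1$; this is the function the lemma wants. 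For $\phi_2$, the equation $y = f_2(x,y)$ is $(1-ay)(1-bx) = 1-y$, linear in $y$, giving $\phi_2(x) = (1-(1-bx))/(1-a(1-bx)) = bx/(1-a+abx)$, which by the same $g$-computation (now with the genuinely linear inner function $v(x)=bx$) is increasing and concave in $x$ directly — no inversion needed — so part (2) follows immediately.

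The main obstacle is the bookkeeping in the previous paragraph: getting the roles of the two coordinates straight so that the sign of the second derivative matches the words "convex" and "concave" in the statement, and confirming that the implicitly-defined solution is genuinely a function (single-valued) on all of $[0,1]$ rather than only locally — both of which are handled cleanly here because each partial-fixed-point equation can be solved explicitly and the relevant denominators are bounded away from $0$ by $1-a>0$. Everything else is a routine differentiation.
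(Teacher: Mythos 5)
Your construction and computation are essentially the paper's: you solve the partial-fixed-point equations explicitly, obtain the same closed forms $\phi_1(y) = \frac{1-(1-by)^n}{1-a(1-by)^n}$ and $\phi_2(x) = \frac{bx}{1-a+abx}$, check range and smoothness from the fact that the denominators are bounded below by $1-a$, and then differentiate. Your factorization $\phi_1 = g\circ u$ with $g(u)=u/(1-a+au)$ and $u(y)=1-(1-by)^n$ is a tidier way to organize the second-derivative computation than the paper's monolithic formula for $\phi_1''$ (which, as a side note, contains a small algebra slip: the bracketed factor should read $(n-1)+a(n+1)(1-by)^n$). You also took the more natural route for part (2) by differentiating $\phi_2$ directly as a function of $x$, where the paper instead re-parametrizes $\phi_2$ as $x$ in terms of $y$; both give the correct sign.

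The sign tension you flag is real and you diagnosed it correctly. The paper itself computes $\phi_1''(y)<0$ and then declares ``$\phi_1$ is convex,'' and computes $\phi_2''(y)>0$ (in the $y$-parametrization) and declares ``$\phi_2$ is concave'' --- the opposite of the standard convention --- and this inverted usage is carried through the proof of Lemma~\ref{lem:concaveconvexderivs}. Your reading, that ``convex'' is meant to describe the curve $x=\phi_1(y)$ viewed as $y$ against $x$ (equivalently $\phi_1^{-1}$, which is indeed increasing and convex), is a consistent way to reconcile the words with the computation. One caveat on your proposed fix: literally renaming $\phi_1^{-1}$ to $\phi_1$ would break the displayed identity $F(\phi_1(y),y)=(\phi_1(y),y')$ in the lemma statement, so the cleaner repair is simply to correct the adjective (or accept the non-standard usage) rather than to re-parametrize. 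Either way, what the later arguments actually consume is that $\phi_1'$ is decreasing and $\phi_2'$ (in the common $y$-variable) is increasing, which your calculations establish correctly; the substance is sound, and only the labels are off.
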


\begin{proof}
We define
\be\label{eq:defng1xyfixedpoint}
g_1(x,y) \ = \ \left(1- (1-ax)(1-by)^n\right) - x
\ee
and
\be\label{eq:defng2xyfixedpoint}
g_2(x,y) \ = \ \left(1 - (1-ay)(1-bx)\right) - y.
\ee

We first analyze the set of pairs $(x,y) \in [0,1]^2$ where $g_{1}\left(x, y\right) = 0$. We immediately see that $g_1(0,0) =
0$, $g_1(0,y) > 0$ for $y \in (0,1]$, and $g_1(1,y) < 0$ for $y \in
[0,1]$. Thus by the Intermediate Value Theorem, for each $y \in
(0,1]$ there is a number (which we denote by $\phi_1(y)$) such that $g_1(\phi_1(y),y) = 0$ and
$\phi_1(y) \in [0,1]$. It is easy to see that $\phi_1(y)$ is a
continuous and differentiable function of $y$; in fact,
\bea\label{eq:defnphi1ofyexplicit} \phi_1(y) & \ = \ & \frac{1 -
(1-by)^n}{1 - a(1-by)^n} \nonumber\\ \phi_1'(y)& \ = \ &
\frac{nb(1-a)(1-by)^{n-1}}{(1 - a(1-by)^n)^2}.
\eea
Note $\phi_1(y)
\in [0,1]$: it is clearly positive, and $\frac{1-c}{1-ac} > 1$ for
$c>0$ only when $a>1$. As $a, b \in (0,1)$, $\phi_1'(y) > 0$. Thus
$\phi_1(y)$ is strictly increasing.

We analyze $g_2(x,y) = 0$ similarly. We find
\be\label{eq:defng2xyfixedpointagain} g_2(x,y) \ = \ \left(1 -
(1-ay)(1-bx)\right) - y \ = \ 0. \ee Note $g_2(0,0) = 0$, $g_2(x,0)
> 0$ for $x\in (0,1]$, and $g_2(x,1) < 0$ for $x \in [0,1]$. Solving
yields \be y \ = \ \phi_2(x) \ = \ \frac{bx}{1-a+abx}. \ee We can rewrite this as a function of $y$ as follows:
\be
x \ = \ \phi_{2}\left(y\right)\ =\ \frac{\left(1-a\right)y}{b\left(1-ay\right)}.
\ee

This is clearly continuously differentiable, and
\be \phi_2'(y) \ = \ \frac{1-a}{b(1-ay)^{2}} \ > \ 0. \ee
Thus $\phi_2(y)$ is an increasing function of $y$.

We now prove that $\phi_1\left(y\right)$ is convex and $\phi_{2}\left(y\right)$ is concave.
Straightforward differentiation and some algebra gives
\bea
\phi_1''(y)& \ = \ & -\frac{b^2n(1-a)(1-by)^{n-2}\cdot\left(n-1+a(1-by)^n +
a(n+1)(1-by)^n\right)}{(1 - a(1-by)^n)^3} \ < \ 0\nonumber\\
\phi_2''(y) & \ = \ &
\frac{2a\left(1-a\right)}{b\left(1-ay\right)^{3}} \ > \ 0.
\eea
Thus $\phi_1(y)$ is convex while $\phi_2\left(y\right)$ is concave. Direct inspection shows each function is twice continuously differentiable.
\end{proof}

The next lemma is useful in determining the number and location of fixed points of our map $F$.

\begin{lem}\label{lem:concaveconvexderivs}
Let $h_{1}, h_{2}$ be twice continuously differentiable functions such that $h_{1}\left(x\right)$ is convex and $h_{2}\left(x\right)$ is concave. If there exists some $p$ such that $h_{1}'\left(p\right) \leq h_{2}'\left(p\right)$ and $h_{1}\left(p\right) = h_{2}\left(p\right)$, then $h_{1}\left(x\right) \neq h_{2}\left(x\right)$ for all $x > p$.
\end{lem}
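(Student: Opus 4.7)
The natural approach is to study the difference $\Delta(x) := h_1(x) - h_2(x)$. The equality $h_1(p) = h_2(p)$ gives $\Delta(p) = 0$, and the hypothesis $h_1'(p) \le h_2'(p)$ gives $\Delta'(p) \le 0$. The plan is to use the convexity of $h_1$ and the concavity of $h_2$ to force $\Delta$ to move away from zero and stay there for all $x > p$.

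The key observation is that $\Delta''(x) = h_1''(x) - h_2''(x)$ has a definite sign on the common domain: in the sense used in the preceding lemma, where the explicit formulas yield $\phi_1''<0$ and $\phi_2''>0$ on $(0,1)$, convexity of $h_1$ and concavity of $h_2$ make $\Delta'$ monotone in the direction compatible with $\Delta'(p) \le 0$. Consequently $\Delta'(x) \le \Delta'(p) \le 0$ for every $x \ge p$, so $\Delta$ is non-increasing on $[p,\infty)$, and integrating (or applying the mean value theorem) gives $\Delta(x) \le \Delta(p) = 0$ for all $x > p$. This already establishes that $h_1$ cannot cross above $h_2$ beyond $p$.

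The main obstacle is upgrading the non-strict bound $\Delta(x) \le 0$ to the strict inequality $\Delta(x) \ne 0$ demanded by the conclusion. A recurring zero $\Delta(q) = 0$ with $q > p$ would, together with $\Delta(p) = 0$ and the monotonicity $\Delta \le 0$ on $[p,q]$, force $\Delta \equiv 0$ on $[p,q]$ and hence $\Delta'' \equiv 0$ there. In the intended application to $\phi_1$ and $\phi_2$ this is precluded by the strict second-derivative estimates of Lemma \ref{lem:phicurvechar}, which show $\phi_1''$ and $\phi_2''$ to be nowhere zero on $(0,1)$; for the abstract statement this is effectively an implicit strict-curvature hypothesis, and it is exactly the point where the argument is most delicate. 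Once this degeneracy is excluded, $\Delta$ is in fact strictly decreasing on $[p,\infty)$ and the lemma follows.
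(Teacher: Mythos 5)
Your argument is correct and is essentially the paper's own proof recast in terms of the difference $\Delta = h_1 - h_2$: both rest on the observation that, in the paper's convention (where ``convex'' is used in the sense $\phi_1''<0$ and ``concave'' in the sense $\phi_2''>0$), the derivative gap $h_1'-h_2'$ is monotone non-increasing, so once $h_1'(p)\le h_2'(p)$ and $h_1(p)=h_2(p)$ the difference cannot return to zero for $x>p$. You are in fact more careful than the paper, whose proof jumps from the non-strict hypotheses to the strict inequality $h_1'(x)<h_2'(x)$ for $x>p$; your remark that the strict conclusion really requires strict curvature somewhere --- supplied in the application by the strict second-derivative signs established in Lemma \ref{lem:phicurvechar} --- identifies exactly the point the paper glosses over, and with that observation your proof is complete.
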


\begin{proof}
As $h_{1}\left(x\right)$ is convex and $h_{2}\left(x\right)$ is concave, $h_{1}'\left(x\right)$ is decreasing and $h_{2}'\left(x\right)$ is increasing. Thus, since $h_{1}'\left(p\right) \leq h_{2}'\left(p\right)$, $h_{1}'\left(x\right) < h_{2}'\left(x\right)$ for all $x > p$. As $h_{1}\left(p\right)= h_{2}\left(p\right)$, this implies that $h_{1}\left(x\right) < h_{2}\left(x\right)$ for all $x > p$.
\end{proof}

We now determine the location of the fixed points.

%\subsection{Proof of Theorem \ref{thm:mainresult} I(a), II(a).}

\begin{proof}[Proof of Theorem \ref{thm:mainresult}, I(a)]
Note that
\be\label{eq:derivsatoriginphis}
\phi_{1}'\left(0\right)\ =\ \frac{bn}{1-a}, \ \ \ \ \ \phi_{2}'\left(0\right)\ =\ \frac{1-a}{b}.
\ee
From these equations, we can see that $\phi_{2}'\left(0\right) \geq \phi_{1}'\left(0\right)$ when $b \leq (1-a)/\sqrt{n}$. Thus by Lemma \ref{lem:concaveconvexderivs}, when $b \leq (1-a)/\sqrt{n}$, there is no $y > 0$ such that $\phi_{1}\left(y\right) = \phi_{2}\left(y\right)$. The trivial fixed point is thus the unique fixed point in $[0,1]^2$.
\end{proof}

We next prove that for $b > (1-a)/\sqrt{n}$, there exists a unique non-trivial fixed point. The key ingredient is the following lemma.

\begin{lem}\label{lem:concaveconvexmax} Let $h_1, h_2 : [0,1] \to
[0,1]$ be twice continuously differentiable functions such that
$h_1(x)$ is convex, $h_2(x)$ is concave, $h_1(0) = h_2(0)=0$
and $h_1(x) \neq h_2(x)$ for $x>0$ sufficiently small. Then there exists at most one other $x>0$ for which $h_1(x) = h_2(x)$.
\end{lem}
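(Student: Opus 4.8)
The plan is to reduce the problem to a statement about the sign changes of the difference $d(x) := h_1(x) - h_2(x)$, and to exploit the fact that $d$ is a difference of a convex and a concave function, hence itself convex. First I would observe that since $h_1$ is convex and $h_2$ is concave, $-h_2$ is convex, so $d = h_1 + (-h_2)$ is convex on $[0,1]$ and twice continuously differentiable. We are given $d(0) = 0$ and $d(x) \neq 0$ for all sufficiently small $x > 0$; since $d$ is continuous, it has a constant sign on some interval $(0,\varepsilon)$. The goal is to show $d$ has at most one zero in $(0,1]$.

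The key step is the following: a convex function has a derivative $d'$ that is nondecreasing, so the zero set of $d$ — being the set where a convex function attains the value $0$ — is either empty, a single point, or a closed interval $[\alpha,\beta]$ with $\alpha < \beta$. (This is the standard fact that $\{x : d(x) \le 0\}$ is an interval for convex $d$, combined with $\{x : d(x) \ge 0\}$ being... well, one uses that $d$ convex and $d \equiv 0$ on a nondegenerate interval forces $d' \equiv 0$ there, then monotonicity of $d'$ pins things down.) I would argue as follows: suppose, for contradiction, that $d$ has two distinct zeros $x_1 < x_2$ in $(0,1]$, in addition to the zero at $0$. Then $d$ vanishes at the three points $0 < x_1 < x_2$ — wait, more carefully, at $0, x_1, x_2$. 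By convexity, on $[0, x_2]$ the graph of $d$ lies on or below the chord joining $(0, d(0)) = (0,0)$ and $(x_2, d(x_2)) = (x_2, 0)$, i.e., $d(x) \le 0$ for all $x \in [0, x_2]$. But also, applying convexity on $[x_1, x_2]$... hmm, that only gives $d \le 0$ again. Instead I would use: on $[0, x_1]$, convexity gives $d(x) \le 0$ (chord from $0$ to $x_1$ at heights $0$ and $0$), so $d \le 0$ near $0$; and for $x$ slightly bigger than $x_1$ but $< x_2$, convexity on $[x_1, x_2]$ gives $d(x) \le 0$ there too. So $d \le 0$ on all of $[0, x_2]$, with equality at $0, x_1, x_2$. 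A convex function attaining an interior minimum value (here $0$, attained at the interior point $x_1$) at three points must be constant equal to that value on the whole interval spanned — concretely, $d(x_1) = 0$ is the minimum on $[0,x_2]$, attained at the interior point $x_1$, which forces $d \equiv 0$ on $[0, x_2]$ by the strict-versus-nonstrict behavior of convex functions (if $d$ dipped strictly below $0$ or rose strictly above between two zeros it would contradict convexity or the minimality). In particular $d \equiv 0$ on $(0, \varepsilon)$ for small $\varepsilon$, contradicting the hypothesis that $d(x) \neq 0$ for small $x > 0$.

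I expect the main obstacle to be stating the convex-function zero-set fact cleanly and rigorously rather than hand-waving it: one must be careful that "$d$ convex, $d(0)=0$, $d$ has two more zeros" genuinely forces $d \equiv 0$ on an initial segment. The clean way is: let $x_1 < x_2$ be two positive zeros; for any $x \in (0, x_2)$, write $x$ as a convex combination of $0$ and $x_2$ (if $x \le x_1$, also of $0$ and $x_1$), apply Jensen to get $d(x) \le 0$; then since $x_1$ is an interior point of $[0,x_2]$ with $d(x_1) = 0 = \max_{[0,x_2]} d$, and $d$ is convex, the maximum of a convex function on an interval is attained at an endpoint unless the function is constant — so $d$ is constant $= 0$ on $[0, x_2]$, giving the contradiction. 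A second, equivalent route that sidesteps even this: use Lemma \ref{lem:concaveconvexderivs}. If $x_1 < x_2$ are two positive zeros of $d$, then $d(x_1) = d(x_2) = 0$ with $d$ convex forces, by the mean value theorem, some point where $d' \le 0$ at or before $x_1$ — more precisely $d'(x_1^-) \le 0$ in the sense of one-sided derivatives, i.e. $h_1'(x_1) \le h_2'(x_1)$ along a suitable version — and then Lemma \ref{lem:concaveconvexderivs} applied at $p = x_1$ yields $h_1(x) \neq h_2(x)$ for all $x > x_1$, contradicting $d(x_2) = 0$. Either packaging works; I would present the second, since it reuses machinery already established in the paper and keeps the argument short.
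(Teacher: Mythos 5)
Your proof is correct, and your first route is genuinely different from the paper's. The paper keeps $h_1$ and $h_2$ separate: it takes $p$ to be the first positive point of agreement, applies the Mean Value Theorem on $[0,p]$ to each of $h_1$ and $h_2$ individually to produce $c_1, c_2 \in (0,p)$ with $h_1'(c_1) = h_1(p)/p = h_2(p)/p = h_2'(c_2)$, and then uses monotonicity of $h_1'$ and $h_2'$ to conclude $h_1' > h_2'$ past $p$. Your first route instead works with the single convex function $d = h_1 - h_2$ and needs no derivative argument at all: the chord from $(0,0)$ to $(x_2,0)$ forces $d \le 0$ on $[0,x_2]$, so $d$ attains its maximum $0$ at the interior point $x_1$, and a convex function attaining an interior maximum is constant, contradicting $d \ne 0$ near $0$. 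That is more elementary and arguably cleaner, since it avoids the derivative bookkeeping entirely and uses only the definition of convexity.

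Your second route is closer in spirit to the paper — it reuses Lemma \ref{lem:concaveconvexderivs} — but the application is tighter. You apply the MVT to $d$ on $[x_1,x_2]$ to get $d'(c)=0$, hence $d'(x_1) \le 0$ by monotonicity of $d'$, which is exactly the hypothesis $h_1'(x_1) \le h_2'(x_1)$ that Lemma \ref{lem:concaveconvexderivs} wants. The paper, by contrast, ends up establishing the reverse inequality $h_1'(x) > h_2'(x)$ for $x \ge p$ and then cites Lemma \ref{lem:concaveconvexderivs}, whose stated hypothesis is $h_1'(p) \le h_2'(p)$ — the conclusion still follows (integrating $h_1' > h_2'$ from $p$ gives $h_1 > h_2$ for $x > p$ directly), but the citation is not literally applicable as written. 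Your version invokes the lemma with the hypothesis in the correct direction, so if you present the derivative route, prefer your packaging.
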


\begin{proof} The claim is trivial if there is only one point of intersection,
so assume there are at least two. Without loss of generality we may
assume $p>0$ is the first point above zero where $h_1$ and $h_2$
agree. Such a smallest point exists by continuity, as we have
assumed $h_1(x) \neq h_2(x)$ for $x>0$ sufficiently small; if there
are infinitely many points $x_n$ where they are equal, let $p =
\liminf_n x_n > 0$.

Because $h_1(x)$ is convex, $h_1'(x)$ is increasing. By the Mean
Value Theorem there is a point $c_1 \in (0,p)$ such that
\be
h_1'(c_1)\ =\ \frac{h_1(p) - h_1(0)}{p-0}\ =\ \frac{h_{1}\left(p\right)}{p}.
\ee
As $h_1'$ is increasing, we have $h_1'(p) > h_1(c_1)$; further, $h_1'(x) > h_1(c_1)$ for all $x
\ge p$. As $h_2(x)$ is concave, $h_2'(x)$ is decreasing. Again by
the Mean Value Theorem there is a point $c_2 \in (0,p)$ such that
\be
h_2'(c_2)\ =\ \frac{h_2(p)-h_2(0)}{p-0}\ =\ \frac{h_2(p)}{p},
\ee
$h_2'(p) < h_2'(c_2)$, and $h_2'(x) < h_2'(c_2)$ for all $x \ge p$. But since $h_{1}\left(p\right) = h_{2}\left(p\right)$, $h_1'(c_1) = h_2'(c_2)$, so $h_1'(x) > h_2'(x)$ for all $x \ge p$. Thus we know from Lemma \ref{lem:concaveconvexderivs} that
there cannot be another point of intersection after $p$.
\end{proof}

We are now ready to complete the analysis.

\begin{proof}[Theorem \ref{thm:mainresult}, II(a)]
We first prove existence and then uniqueness. When $b > (1-a)/\sqrt{n}$, we know from the proof of Theorem \ref{thm:mainresult}, I(a) (see \eqref{eq:derivsatoriginphis}) that $\phi_{1}\left(y\right)$ is above $\phi_{2}\left(y\right)$ near the origin since $\phi_{1}'\left(0\right)> \phi_{2}'\left(0\right)$. The existence of the non-trivial point of intersection follows from the Intermediate Value Theorem. We recall that $y=\phi_2(x)$ is defined in $\left[0, 1\right]$ for all $x \in \left[0, 1\right]$, and $x=\phi_1(y)$ is defined in $\left[0, 1\right]$ for all $y \in \left[0, 1\right]$. As $x\to 1$ we have $\phi_2(x)$ tends to a number strictly less than 1. Thus the curve $y=\phi_2(x)$ hits the line $x=1$ below $(1,1)$. Similarly the curve $x=\phi_1(y)$ hits the line $y=1$ to the left of $(1,1)$. Thus the two curves flip as to which is above the other, implying that there must be one point where the two curves are equal.

We now have two fixed points, the trivial fixed point and the non-trivial fixed point from the second intersection of the two curves. By Lemmas \ref{lem:phicurvechar} and \ref{lem:concaveconvexmax} there are no other fixed points, and thus there is a unique, non-trivial fixed point.
\end{proof}

%%%%%%%%%%%%%%%%%%%%%%%%%%%%%%%%%%%%%%%%%%%%%%%%%%%%%%%%%%%%%%%%%%%%%%%%%%%%%%%%%%%%%%%%%%%%%%%%%%%%%%%%%%%%%%%%%%%%%%%%%%%%%%%%%%%%%%%%
%%%%%%%%%%%%%%%%%%%%%%%%%%%%%%%%%%%%%%%%%%%%%%%%%%%%%%%%%%%%%%%%%%%%%%%%%%%%%%%%%%%%%%%%%%%%%%%%%%%%%%%%%%%%%%%%%%%%%%%%%%%%%%%%%%%%%%%%
%%%%%%%%%%%%%%%%%%%%%%%%%%%%%%%%%%%%%%%%%%%%%%%%%%%%%%%%%%%%%%%%%%%%%%%%%%%%%%%%%%%%%%%%%%%%%%%%%%%%%%%%%%%%%%%%%%%%%%%%%%%%%%%%%%%%%%%%
%%%%%%%%%%%%%%%%%%%%%%%%%%%%%%%%%%%%%%%%%%%%%%%%%%%%%%%%%%%%%%%%%%%%%%%%%%%%%%%%%%%%%%%%%%%%%%%%%%%%%%%%%%%%%%%%%%%%%%%%%%%%%%%%%%%%%%%%
%%%%%%%%%%%%%%%%%%%%%%%%%%%%%%%%%%%%%%%%%%%%%%%%%%%%%%%%%%%%%%%%%%%%%%%%%%%%%%%%%%%%%%%%%%%%%%%%%%%%%%%%%%%%%%%%%%%%%%%%%%%%%%%%%%%%%%%%
%%%%%%%%%%%%%%%%%%%%%%%%%%%%%%%%%%%%%%%%%%%%%%%%%%%%%%%%%%%%%%%%%%%%%%%%%%%%%%%%%%%%%%%%%%%%%%%%%%%%%%%%%%%%%%%%%%%%%%%%%%%%%%%%%%%%%%%%

\section{Dynamical Behavior: $b \le (1-a)/\sqrt{n}$}\label{sec:convbsmall}

In this section we show how an eigenvalue perspective can completely determine the dynamics if $b\le (1-a)/\sqrt{n}$,
proving Theorem \ref{thm:mainresult}, I(b). As these methods fail for larger $b$, we adopt a different perspective in \S\ref{sec:convblarge}.

\subsection{Technical Preliminaries}

Our analysis of the dynamical behavior relies on the following lemma.

\begin{lem}\label{lem:techevaluelem}
Let $a, b \in (0,1)$ with $b < (1-a)/\sqrt{n}$, and let $\lambda_1
\ge \lambda_2$ denote the eigenvalues of the matrix
$\mattwo{a\alpha}{nb\beta}{b\gamma}{a\delta}$, where $\alpha, \beta,
\gamma, \delta \in [0,1]$. Then $-1 < \lambda_1, \lambda_2 < 1$.
\end{lem}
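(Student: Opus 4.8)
The plan is to work directly with the characteristic polynomial of $M:=\mattwo{a\alpha}{nb\beta}{b\gamma}{a\delta}$. Set $T:=\operatorname{tr}M=a(\alpha+\delta)$ and $D:=\det M=a^{2}\alpha\delta-nb^{2}\beta\gamma$, so that the characteristic polynomial is $p(\lambda)=\lambda^{2}-T\lambda+D$. First I would observe that the eigenvalues are real: the discriminant is $T^{2}-4D=a^{2}(\alpha-\delta)^{2}+4nb^{2}\beta\gamma\ge 0$, so it is legitimate to order them $\lambda_{1}\ge\lambda_{2}$. Since $p$ is an upward parabola with real roots, the statement $-1<\lambda_{2}\le\lambda_{1}<1$ is equivalent to the three conditions $p(1)>0$, $p(-1)>0$, and $-1<T/2<1$ (the vertex abscissa lies strictly between $-1$ and $1$); this is the degree-two case of the Jury/Schur--Cohn test, and I would either cite it or give the one-line parabola argument.

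The crux --- and the only place the hypothesis $b<(1-a)/\sqrt{n}$ is used --- is $p(1)>0$. Here I would exploit the factorization
\[
p(1)=1-T+D=1-a\alpha-a\delta+a^{2}\alpha\delta-nb^{2}\beta\gamma=(1-a\alpha)(1-a\delta)-nb^{2}\beta\gamma .
\]
Because $\alpha,\delta\le 1$ we have $(1-a\alpha)(1-a\delta)\ge(1-a)^{2}$, while $\beta,\gamma\le 1$ together with $b^{2}<(1-a)^{2}/n$ give $nb^{2}\beta\gamma\le nb^{2}<(1-a)^{2}$; hence $p(1)\ge(1-a)^{2}-nb^{2}>0$.

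The remaining two conditions are slack. Indeed $p(-1)=1+T+D=(1+a\alpha)(1+a\delta)-nb^{2}\beta\gamma\ge 1-nb^{2}>1-(1-a)^{2}>0$, and $T/2=a(\alpha+\delta)/2$ lies in $[0,a]\subseteq[0,1)$, so $-1<T/2<1$. Combining the three inequalities yields $-1<\lambda_{2}\le\lambda_{1}<1$.

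I do not expect a genuine obstacle: the argument is elementary once one (i) reduces ``both roots in $(-1,1)$'' to the signs of $p(\pm1)$ plus the location of the vertex, and (ii) spots the factorization of $1-T+D$, after which $nb^{2}<(1-a)^{2}$ closes it immediately. An even shorter alternative bypasses the polynomial: conjugating by $B=\operatorname{diag}(1,\sqrt{n})$ gives $BMB^{-1}=\mattwo{a\alpha}{\sqrt{n}\,b\beta}{\sqrt{n}\,b\gamma}{a\delta}$, whose maximal row sum is at most $a+\sqrt{n}\,b<1$ by hypothesis, so $\rho(M)\le\|BMB^{-1}\|_{\infty}<1$; combined with the reality of the spectrum this again gives $-1<\lambda_{2}\le\lambda_{1}<1$.
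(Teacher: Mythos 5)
Your proof is correct, and it takes a genuinely different route from the paper's. The paper writes out $\lambda_1$ explicitly via the quadratic formula and bounds it directly: using $\sqrt{u+v}\le\sqrt{u}+\sqrt{v}$ together with $\beta\gamma\le 1$ and $nb^2<(1-a)^2$, it obtains $\lambda_1<1-\left(1-\max(\alpha,\delta)\right)a\le 1$, and then concludes from $|\lambda_2|\le\lambda_1$ (which holds because the trace $a(\alpha+\delta)\ge 0$). Your main argument instead reduces the claim to the sign conditions $p(1)>0$, $p(-1)>0$, $|T/2|<1$ on the characteristic polynomial; the key step is the factorization $p(\pm1)=(1\mp a\alpha)(1\mp a\delta)-nb^2\beta\gamma$, which makes the role of the hypothesis $nb^2<(1-a)^2$ transparent and keeps the computation radical-free. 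Your alternative via the diagonal similarity $B=\mathrm{diag}(1,\sqrt{n})$ and the bound $\rho(M)\le\|BMB^{-1}\|_\infty\le a+b\sqrt{n}<1$ is shorter still and exposes structurally why the threshold involves $\sqrt{n}$. Your approaches are cleaner and more systematic; the paper's direct bound, on the other hand, gives an explicit margin $1-\lambda_1\ge\left(1-\max(\alpha,\delta)\right)a$, which could in principle feed a rate-of-convergence estimate, though the paper does not pursue that.
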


\begin{proof} The sum of the eigenvalues is the trace of the matrix (which
is $a(\alpha + \delta)$), and the product of the eigenvalues is the
determinant (which is $a^2 \alpha\delta - nb^2 \beta\gamma$). Thus
the eigenvalues satisfy the characteristic equation \be \lambda^2 -
a(\alpha+\delta)\lambda + (a^2 \alpha\delta - nb^2 \beta\gamma). \ee
The eigenvalues are therefore \be \frac{a(\alpha + \delta) \pm
\sqrt{a^2 (\alpha+\delta)^2 - 4(a^2 \alpha\delta - nb^2
\beta\gamma)}}{2} \ = \ \frac{a(\alpha + \delta) \pm \sqrt{a^2
(\alpha-\delta)^2 + 4nb^2 \beta\gamma}}{2}. \ee As the discriminant
is positive, the eigenvalues are real. Since $a(\alpha + \delta) \ge
0$, we have $|\lambda_2| \le \lambda_1$, where \be 0 \ \le \
\lambda_1 \ = \ \frac{a(\alpha + \delta) +\sqrt{a^2
(\alpha-\delta)^2 + 4nb^2 \beta\gamma}}{2}. \ee As $\beta \gamma \le
1$, $nb^2 < (1-a)^2$ and $\sqrt{u+v} \le \sqrt{u} + \sqrt{v}$ for
$u, v \ge 0$ we find \bea \lambda_1 & \ < \ & \frac{a(\alpha +
\delta) + \sqrt{a^2 (\alpha-\delta)^2} +
\sqrt{4(1-a)^2}}{2}\nonumber\\ & = & \frac{a(\alpha + \delta) +
a|\alpha-\delta| + 2(1-a)}{2} \nonumber\\ & = & \frac{2a
\max(\alpha,\delta) + 2(1-a)}{2} \nonumber\\ & = & 1 - \left(1 -
\max(\alpha,\delta)\right)a \ \le \ 1, \eea where the last claim
follows from $a, \alpha, \delta \in [0,1]$.
\end{proof}

\subsection{Proofs}

Armed with the following, we now prove the first half of our main result, the dynamical behavior at or below the critical threshold.

We prove the claim by using the Mean Value Theorem and an eigenvalue analysis of the resulting matrix. From Theorem \ref{thm:mainresult}, I(a) we know $(0,0)$ is the unique fixed point. We have \be f\left(\vectwo{u}{v}\right) \ = \
\vectwo{1-(1-au)(1-bv)^n}{1-(1-av)(1-bu)}. \ee Let \be c(t) \ = \
(1-t)\vectwo{0}{0} + t \vectwo{x}{y}, \ \ \ c'(t) \ = \
\vectwo{x}{y}. \ee Thus $c(t)$ is the line connecting the trivial
fixed point to $\vectwo{x}{y}$, with $c(0) = \vectwo{0}{0}$ and
$c(1) = \vectwo{x}{y}$. Let \be \mathcal{F}(t) \ = \ f(c(t)) \ = \
\vectwo{1-(1-atx)(1-bty)^n}{1-(1-aty)(1-btx)}. \ee Then simple
algebra (or the chain rule) yields \be \mathcal{F}'(t) \ = \
\mattwo{a(1-bty)^n}{nb(1-atx)(1-bty)^{n-1}}{b(1-aty)}{a(1-btxu)}
\vectwo{x}{y}. \ee

We now apply the one-dimensional chain rule twice, once to the
$x$-coordinate function and once to the $y$-coordinate function. We
find there are values $t_1$ and $t_2$ such that \be
f\left(\vectwo{x}{y}\right) - f\left(\vectwo{0}{0}\right) \ = \
\mattwo{a(1-bt_1y)^n}{nb(1-at_1x)(1-bt_1y)^{n-1}}{b(1-at_2y)}{a(1-bt_2x)}
\vectwo{x}{y}. \ee To see this, look at the $x$-coordinate of
$\mathcal{F}(t)$: $h(t) = 1-(1-atx)(1-bty)^n$. We have $h(1) - h(0)$
$=$ $h(1)$ $=$ $h'(t_1)(1-0)$ for some $t_1$. As \bea h'(t_1) & \ =
\ & ax(1-bt_1y)^n + nby(1-at_1x)(1-bt_1y)^{n-1} \nonumber\\
& = & \left(a(1-bt_1y)^n, \ \ nb(1-at_1x)(1-bt_1y)^{n-1}\right)
 \vectwo{x}{y}, \eea the claim follows; a similar argument
yields the claim for the $y$-coordinate (though we might have to use
a different value of $t$, and thus denote the value arising from
applying the Mean Value Theorem here by $t_2$). We therefore have \bea f\left(\vectwo{x}{y}\right) & \ =  \ &
\mattwo{a(1-bt_1y)^n}{nb(1-at_1x)(1-bt_1y)^{n-1}}{b(1-at_2y)}{a(1-bt_2x)}
\vectwo{x}{y}\nonumber\\ & \ = \ & A(a,b,x,y,t_1,t_2) \vectwo{x}{y}.
\eea

To show that $f$ is a contraction mapping, it is enough to show
that, for all $a, b$ with $b < (1-a)/\sqrt{n}$ and all $x,y \in
[0,1]$ that the eigenvalues of $A(a,b,x,y,t_1,t_2)$ are less than 1
in absolute value; however, this is exactly what Lemma
\ref{lem:techevaluelem} gives (note our assumptions imply that
$\alpha = (1-bt_1y)^n$ through $\delta = (1-bt_2x)$ are all in
$(0,1)$). Let us denote $\lambda_{\rm max}(a,b)$ the maximum value
of $\lambda_1$ for fixed $a$ and $b$ as we vary $t_1, t_2, x, y \in
[0,1]$. As we have a continuous function on a compact set, it
attains its maximum and minimum. As $\lambda_1$ is always less than
1, so is the maximum. Here it is important that we allow ourselves
to have $t_1, t_2 \in [0,1]$, so that we have a closed and bounded
set; it is immaterial (from a compactness point of view) that $a, b
\in (0,1)$ as they are fixed. As $0 < a, b < 1$,
we have $\alpha, \beta, \gamma, \delta <1$ and thus the inequalities claimed in Lemma
\ref{lem:techevaluelem} hold. For any matrix $M$ we  have $||Mv|| \le
|\lambda_{\rm max}| ||v||$; thus \be
\left|\left|f\left(\vectwo{x}{y}\right)\right|\right| \ \le \
\lambda_{\rm max}(a,b) \left|\left| \vectwo{x}{y}\right|\right|; \ee
as $\lambda_{\rm max}(a,b) < 1$ we have a contraction map. Therefore
any non-zero $\vectwo{x}{y}$ iterates to the trivial fixed point if
$b < (1-a)/\sqrt{n}$ and $n \ge 2$. In particular, the trivial fixed
point is the only fixed point (if not, $A(a,b,x,y,t_1,t_2)v = v$ for
$v$ a fixed point, but we know $||A(a,b,x,y,t_1,t_2)v|| < ||v||$ if
$v$ is not the zero vector).

\begin{rek} Unfortunately this eigenvalue approach does not work in a simple, closed form manner for general $b > (1-a)/\sqrt{n}$. We include details of such an attempted analysis in Appendix $B$.
\end{rek}

%%%%%%%%%%%%%%%%%%%%%%%%%%%%%%%%%%%%%%%%%%%%%%%%%%%%%%%%%%%%%%%%%%%%%%%%%%%%%%%%%%%%%%%%%%%%%%%%%%%%%%%%%%%%%%%%%%%%%%%%%%%%%%%%%%%%%%%%
%%%%%%%%%%%%%%%%%%%%%%%%%%%%%%%%%%%%%%%%%%%%%%%%%%%%%%%%%%%%%%%%%%%%%%%%%%%%%%%%%%%%%%%%%%%%%%%%%%%%%%%%%%%%%%%%%%%%%%%%%%%%%%%%%%%%%%%%
%%%%%%%%%%%%%%%%%%%%%%%%%%%%%%%%%%%%%%%%%%%%%%%%%%%%%%%%%%%%%%%%%%%%%%%%%%%%%%%%%%%%%%%%%%%%%%%%%%%%%%%%%%%%%%%%%%%%%%%%%%%%%%%%%%%%%%%%
%%%%%%%%%%%%%%%%%%%%%%%%%%%%%%%%%%%%%%%%%%%%%%%%%%%%%%%%%%%%%%%%%%%%%%%%%%%%%%%%%%%%%%%%%%%%%%%%%%%%%%%%%%%%%%%%%%%%%%%%%%%%%%%%%%%%%%%%
%%%%%%%%%%%%%%%%%%%%%%%%%%%%%%%%%%%%%%%%%%%%%%%%%%%%%%%%%%%%%%%%%%%%%%%%%%%%%%%%%%%%%%%%%%%%%%%%%%%%%%%%%%%%%%%%%%%%%%%%%%%%%%%%%%%%%%%%
%%%%%%%%%%%%%%%%%%%%%%%%%%%%%%%%%%%%%%%%%%%%%%%%%%%%%%%%%%%%%%%%%%%%%%%%%%%%%%%%%%%%%%%%%%%%%%%%%%%%%%%%%%%%%%%%%%%%%%%%%%%%%%%%%%%%%%%%

\section{Dynamical Behavior: $b > (1-a)/\sqrt{n}$}\label{sec:convblarge}

In this section we prove Theorem \ref{thm:mainresult}, II(b), establishing convergence to the non-trivial fixed point.

\subsection{Properties of the Four Regions}

Unfortunately, the method of eigenvalues does not seem to naturally generalize to large $b$. While it is possible to compute the eigenvalues of the associated matrix, it does not appear feasible to obtain a workable expression that can be understood as the parameters vary; however, breaking the analysis of $F$ into regions induced from the maps $\phi_1$ and $\phi_2$ of \S\ref{sec:detfixedpointsF} turns out to be very fruitful. This is because these curves determine partial fixed points. See Figure \ref{fig:fourregions} for the four regions.

\begin{figure}
\begin{center}
\includegraphics[totalheight=6cm]{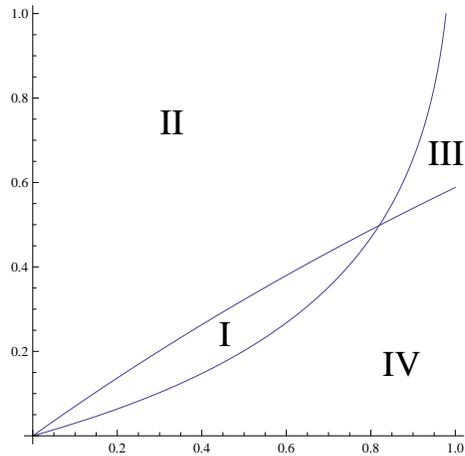}
\caption{\label{fig:fourregions} The four regions determined by $\phi_1$ and $\phi_2$ when $b>(1-a)/\sqrt{n}$.}
\end{center}\end{figure}

We first study the effect of $F$ in Regions I and III. Our first lemma provides some general information about the image of these regions under $F$, which we then use to show in the next lemma that $F$ maps each of these Regions I and III to themselves.

\begin{lem}\label{13directions} Let $b > (1-a)/\sqrt{n}$. Points in Region \emph{I} strictly increase in $x$ and $y$ on iteration by $F$, and points in Region \emph{III} strictly decrease in $x$ and $y$ on iteration.
\end{lem}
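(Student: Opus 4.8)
The plan is to read off the sign of the coordinate changes directly from the geometry of the curves $\phi_1$ and $\phi_2$, using the functions $g_1$ and $g_2$ from the proof of Lemma \ref{lem:phicurvechar}. Recall that $f_1(x,y) - x = g_1(x,y)$ and $f_2(x,y) - y = g_2(x,y)$, so ``$x$ strictly increases under $F$'' is exactly ``$g_1(x,y) > 0$'' and ``$y$ strictly increases'' is exactly ``$g_2(x,y) > 0$''; similarly for the decreases. So the lemma is the assertion that both $g_1$ and $g_2$ are positive throughout Region I and both are negative throughout Region III. First I would pin down which region is which in Figure \ref{fig:fourregions}: since $\phi_1(y)$ (the solution locus of $g_1=0$) is convex and starts above $\phi_2$ near the origin (as $\phi_1'(0) > \phi_2'(0)$ when $b > (1-a)/\sqrt n$) and they cross again at the non-trivial fixed point, Region I should be the region ``below both curves'' near the origin (bounded by the axes and the lower of the two curves) and Region III the region ``above both curves'' near $(1,1)$.

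The key monotonicity facts are: for fixed $y$, $g_1(x,y)$ is strictly decreasing in $x$ (from \eqref{eq:defng1xyfixedpoint}, $\partial g_1/\partial x = a(1-by)^n - 1 < 0$ since $a<1$), so $g_1(x,y) > 0$ precisely when $x < \phi_1(y)$ and $g_1(x,y) < 0$ precisely when $x > \phi_1(y)$; and for fixed $y$, $g_2(x,y)$ is strictly increasing in $x$ (from \eqref{eq:defng2xyfixedpoint}, $\partial g_2/\partial x = b(1-ay) > 0$), so $g_2(x,y) > 0$ precisely when $x > \phi_2(y)$ (writing the $\phi_2$ curve as $x = \phi_2(y)$) and $g_2(x,y) < 0$ when $x < \phi_2(y)$. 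Now for a point in Region I: it lies to the left of the curve $x = \phi_1(y)$, hence $g_1 > 0$ there (this gives the increase in $x$); and it lies to the right of the curve $x = \phi_2(y)$, hence $g_2 > 0$ there (this gives the increase in $y$). For a point in Region III the two inequalities reverse: it is to the right of $x=\phi_1(y)$ so $g_1 < 0$, and to the left of $x=\phi_2(y)$ so $g_2 < 0$, giving the strict decreases. Everything is strict because Region I and Region III are open regions not touching either curve except possibly at the two fixed points, which are excluded.

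The one point that needs genuine care — and I expect it to be the main obstacle — is making the phrase ``Region I lies left of $\phi_1$ and right of $\phi_2$'' (and the mirror statement for Region III) precise and rigorous, rather than just reading it off the picture. Concretely one must check that between the origin and the non-trivial fixed point the graph of $x=\phi_2(y)$ stays strictly to the left of the graph of $x=\phi_1(y)$ (so there really is a well-defined ``in-between'' region, namely Region II, and the region ``outside both on the origin side'' is a single region, Region I), and that past the fixed point $\phi_1$ passes below/right of $\phi_2$. This follows from Lemma \ref{lem:concaveconvexmax} together with the derivative comparison $\phi_1'(0) > \phi_2'(0)$: these force exactly two intersection points (the origin and $(x_f,y_f)$) and force $\phi_1 > \phi_2$ strictly on the open segment between them and $\phi_1 < \phi_2$ strictly beyond, which is exactly the ordering of the curves needed to define the four regions unambiguously. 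Once the regions are described by these strict inequalities in $x$ relative to $\phi_1(y)$ and $\phi_2(y)$, the sign computations for $g_1$ and $g_2$ above complete the proof.
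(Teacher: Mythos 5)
Your proposal is correct and is in substance the same argument as the paper's. The paper expands the Region I inequalities $x < \phi_1(y)$ and $y < \phi_2(x)$ and clears denominators directly to obtain $x < f_1(x,y)$ and $y < f_2(x,y)$ (and reverses for Region III); you arrive at the same conclusion by observing that $g_1$ is strictly decreasing in $x$ and $g_2$ strictly increasing in $x$ at fixed $y$, with $\phi_1,\phi_2$ their zero loci, so the signs of $g_1,g_2$ are pinned down by which side of each curve the point lies on. That repackaging via monotonicity of $g_1,g_2$ is arguably a touch cleaner than the paper's algebra, but it is the same idea; one small caution is that your opening description of Region I as ``below both curves'' is loose and potentially misleading --- the operative (and correct) characterization you actually use is $x < \phi_1(y)$ together with $y < \phi_2(x)$, matching the paper --- and the paragraph you spend worrying about the ordering of $\phi_1$ versus $\phi_2$ between the fixed points is not actually needed for this lemma, since the sign computations depend only on the defining inequalities of Regions I and III, not on whether Region II is nonempty.
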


\begin{proof}
A point $(x, y)$ in Region I satisfies the inequalities
\begin{equation}\label{1x}
x\ <\ \frac{1- ( 1-by)^n}{1- a(1-by)^n}
\end{equation}
and
\begin{equation}\label{1y}
y\ <\ \frac{bx}{1-a+abx}.
\end{equation}

By multiplying by the denominator on both sides for both inequalities, we find that
\bea
x - ax\left(1-by\right)^{n} & \ <\ & 1-\left(1-by\right)^{n}
\nonumber\\ y-ay+abxy & \ <\ & bx.
\eea
Rearranging these terms gives
\begin{equation}
x\ <\ 1-(1-by)^n + ax(1-by)^n\ =\ 1-(1-ax)(1-by)^n\ =\ f_{1}\left(x,y\right)
\end{equation}
and
\begin{equation}
y\ <\ ay+bx-abxy\ =\ 1- (1-ay)(1-bx)\ =\ f_{2}\left(x,y\right).
\end{equation}
Thus, the $x$ and $y$ coordinates of the iterate of a point in Region I are strictly greater than the $x$ and $y$ coordinates of the initial point.

The proof for points in Region III is exactly analogous except with the inequalities flipped. Thus
\begin{equation}\label{3x}
x\ >\ \frac{1- ( 1-by)^n}{1- a(1-by)^n}
\end{equation}
and
\begin{equation}\label{3y}
y\ >\ \frac{bx}{1-a+abx}
\end{equation}
imply that
\begin{equation}\label{3x'}
x\ >\ 1-(1-ax)(1-by)^n\ =\ f_{1}\left(x,y\right)
\end{equation}
and
\begin{equation}\label{3y'}
y\ >\  1- (1-ay)(1-bx)\ =\ f_{2}\left(x,y\right),
\end{equation}
i.e., the $x$ and $y$ coordinates of the iterate of a point in Region III are strictly less than the $x$ and $y$ coordinates of the initial point.
\end{proof}

\begin{lem}\label{13stay} Let $b > (1-a)/\sqrt{n}$. The image of Region \emph{I} under $F$ is contained in \emph{I}, and the image of Region \emph{III} under $F$ is contained in Region \emph{III}.
%Points in Region I iterate inside Region I under $F$, and points in Region III iterate inside Region III under $F$.
\end{lem}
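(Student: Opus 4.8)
The plan is to show that $F$ maps Region I into itself by combining the coordinate-wise monotonicity from Lemma \ref{13directions} with the ordering properties of the curves $\phi_1$ and $\phi_2$ established in Lemma \ref{lem:phicurvechar}. Recall that Region I is the set of $(x,y)$ lying below the curve $x=\phi_1(y)$ (equivalently $x < \phi_1(y)$, i.e. \eqref{1x}) and below $y=\phi_2(x)$ (i.e. \eqref{1y}); these are exactly the inequalities $g_1(x,y)>0$ and $g_2(x,y)>0$ in the notation of \eqref{eq:defng1xyfixedpoint}--\eqref{eq:defng2xyfixedpoint}. By Lemma \ref{13directions}, if $(x,y)\in$ Region I then $F(x,y)=(f_1(x,y),f_2(x,y))$ satisfies $f_1(x,y)>x$ and $f_2(x,y)>y$; I must show $(f_1(x,y),f_2(x,y))$ still satisfies both defining inequalities.

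First I would record the monotonicity of $f_1$ and $f_2$ in each argument: from \eqref{eq:defnofmapF}, $f_1(x,y)=1-(1-ax)(1-by)^n$ is strictly increasing in both $x$ and $y$ on $[0,1]^2$ (since $a,b\in(0,1)$), and likewise $f_2(x,y)=1-(1-ay)(1-bx)$ is strictly increasing in both. Next I would use that the defining inequalities can be rephrased in terms of the coordinate functions themselves: the condition $x<\phi_1(y)$ is equivalent to $x<f_1(x,y)$ (this is precisely the rearrangement carried out in the proof of Lemma \ref{13directions}), and $y<\phi_2(x)$ is equivalent to $y<f_2(x,y)$. So to check that the image point $(x',y'):=(f_1(x,y),f_2(x,y))$ lies in Region I, I need $x'<f_1(x',y')$ and $y'<f_2(x',y')$. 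Since $x'>x$ and $y'>y$ and $f_1$ is increasing in each argument, $f_1(x',y')>f_1(x,y)=x'$, giving the first inequality; similarly $f_2(x',y')>f_2(x,y)=y'$, giving the second. Hence $(x',y')\in$ Region I. The argument for Region III is verbatim the same with all inequalities reversed: points there satisfy $x>\phi_1(y)$ and $y>\phi_2(x)$, i.e. $x>f_1(x,y)$ and $y>f_2(x,y)$, Lemma \ref{13directions} gives $x'<x$, $y'<y$, and monotonicity of $f_1,f_2$ yields $f_1(x',y')<f_1(x,y)=x'$ and $f_2(x',y')<f_2(x,y)=y'$, so $(x',y')\in$ Region III.

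The one point needing a little care — and the step I expect to be the main obstacle — is making sure the image stays inside the square $[0,1]^2$ so that the curves $\phi_1,\phi_2$ and the region descriptions even make sense there; this follows because $F$ maps $[0,1]^2$ into itself (each $f_i$ is a probability of the form $1-(\text{product of factors in }[0,1])$), but I would state it explicitly. A secondary subtlety is the boundary behaviour: the inequalities defining the regions are strict, and I should note that the strictness is preserved precisely because $f_1,f_2$ are \emph{strictly} increasing in each argument and $x'>x$, $y'>y$ are strict by Lemma \ref{13directions}, so no image point can land on the curves $\phi_1$ or $\phi_2$. With these observations the containment is immediate; the whole proof is essentially a monotonicity bookkeeping argument once Lemma \ref{13directions} is in hand.
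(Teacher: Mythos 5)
Your proof is correct, and it takes a genuinely different (and shorter) route than the paper's. The paper establishes containment by verifying, for a point $(x,y)$ in Region I, that the iterated coordinates satisfy the original $\phi$-inequalities, and it does so via a chained argument for each coordinate: first $f_1(x,y) < \phi_1(y)$ (a fresh algebraic inequality), then $\phi_1(y) < \phi_1(f_2(x,y))$ (monotonicity of $\phi_1$ plus Lemma~\ref{13directions}), and similarly for the $y$-coordinate. Your argument short-circuits this by observing that the defining condition of Region I is \emph{already} expressible purely in terms of the coordinate functions, namely $x < f_1(x,y)$ and $y < f_2(x,y)$, and that this form of the condition is manifestly invariant under one step of $F$: since $f_1,f_2$ are strictly increasing in each argument on $[0,1]^2$ and $x'=f_1(x,y)>x$, $y'=f_2(x,y)>y$, one gets $f_1(x',y')>f_1(x,y)=x'$ and $f_2(x',y')>f_2(x,y)=y'$ immediately. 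You thereby avoid the two ``half'' inequalities the paper proves; the only ingredients are Lemma~\ref{13directions}, the strict monotonicity of $f_1,f_2$, and the (easy, but worth stating) equivalence $x<\phi_1(y)\Leftrightarrow x<f_1(x,y)$, which follows from $\partial g_1/\partial x = a(1-by)^n - 1 < 0$ (and analogously for $g_2$). The trade-off is that your proof leans on this equivalence being an honest ``iff,'' whereas the paper's Lemma~\ref{13directions} proof only exhibits the forward direction; you are right that the converse holds by the same rearrangement, but it deserves one explicit sentence. With that sentence added, your version is a clean simplification and I would prefer it.
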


\begin{proof}
We prove that for a point $(x, y)$ in Region I, its iterated x-coordinate satisfies \eqref{1x} and its iterated y-coordinate satisfies \eqref{1y}.\\

\textbf{$x$-Coordinate Iteration:}

We must show that
\begin{equation}
1-(1-ax)(1-by)^n\ <\ \frac{1-(1-b(1- (1-ay)(1-bx)))^n}{1-a(1-b(1- (1-ay)(1-bx)))^n}.
\end{equation}
We'll do this by first showing the left hand side is less than $\frac{1-(1-by)^n}{1-a(1-by)^n}\ >\ 1-(1-ax)(1-by)^n$, which we then show is less than the right hand side.

Since $(x, y)$ is in Region I, we know that
\begin{equation}
x\ <\ 1-(1-ax)(1-by)^n,
\end{equation}
which implies that
\begin{equation}
\frac{x}{1-(1-ax)(1-by)^n}\ <\ 1.
\end{equation}
Since $ 0 < a, b, y < 1$, we know that $a(1-by)^n > 0$. Thus,
\begin{equation}
1- \frac{ax(1-by)^n}{1-(1-ax)(1-by)^n}\ >\ 1-a(1-by)^n.
\end{equation}
We simplify the left side of the inequality:
\begin{eqnarray}
\frac{1-(1-ax)(1-by)^n}{1-(1-ax)(1-by)^n} - \frac{ax(1-by)^n}{1-(1-ax)(1-by)^n} & \ > \ & 1-a(1-by)^n \nonumber \\ \frac{1 - (1-by)^n + ax(1-by)^n}{1-(1-ax)(1-by)^n} - \frac{ax(1-by)^n}{1-(1-ax)(1-by)^n} & > &   1-a(1-by)^n\nonumber\\  \frac{1-(1-by)^n}{1-(1-ax)(1-by)^n} & > &  1-a(1-by)^n.
\end{eqnarray}
Finally, we rearrange the inequality, and obtain our intermediate step:
\begin{equation}\label{xcoordfirsthalf}
\frac{1-(1-by)^n}{1-a(1-by)^n}\ >\ 1-(1-ax)(1-by)^n.
\end{equation}

For the second part of the proof, recall that
\begin{equation}
y\ <\ 1- (1-ay)(1-bx),
\end{equation}
which implies
\begin{equation}
(1-b(1-(1-ay)(1-bx)))^n\ <\ (1-by)^n.
\end{equation}
Now we let $(1-b(1-(1-ay)(1-bx)))^n = c$ and $(1-by)^n = c + \delta$ where $0<c<1$ and $\delta>0$ such that $c< c+\delta <1$. Then we can write
\begin{eqnarray}
-\delta &<& -a\delta \nonumber \\
1 -c-\delta-ac+ac^2+ac\delta &\ < \ &1 -c-ac+ac^2 -a\delta+a\delta c \nonumber \\
(1-ac)(1-c-\delta) &<& (1-ac-a\delta)(1-c) \nonumber \\
\frac{1-(c + \delta)}{1-a(c +\delta)} &<& \frac{1-c}{1-ac}.
\end{eqnarray}
Thus
\begin{equation}\label{xcoordsecondhalf}
\frac{1-(1-b(1- (1-ay)(1-bx)))^n}{1-a(1-b(1- (1-ay)(1-bx)))^n}\ >\ \frac{1- ( 1-by)^n}{1- a(1-by)^n}.
\end{equation}
The desired result follows from \eqref{xcoordfirsthalf} and \eqref{xcoordsecondhalf}.\\

\bigskip

\textbf{$y$-Coordinate Iteration:}

We must show that
\begin{equation}
1- (1-ay)(1-bx)\ <\ \frac{b(1-(1-ax)(1-by)^n)}{1-a+ab(1-(1-ax)(1-by)^n)}.
\end{equation} We argue similarly as before, first showing the left hand side is less than $\frac{bx}{1- (1-ay)(1-bx)}$, which we then show is less than the right hand side. Since $(x, y)$ is in Region I, we know that
\begin{equation}
y\ <\  1- (1-ay)(1-bx),
\end{equation}
which implies that
\begin{equation}
\frac{y}{1- (1-ay)(1-bx)}\ <\ 1.
\end{equation}
Since $0 < a, b, x < 1$, we know that $abx - a < 0$. Thus,
\begin{equation}
1 + \frac{y(abx - a)}{1- (1-ay)(1-bx)}\ >\ 1 - a +abx.
\end{equation}
We simplify the left side of the inequality:
\begin{eqnarray}
\frac{1- (1-ay)(1-bx)}{1- (1-ay)(1-bx)}+\frac{y(abx - a)}{1- (1-ay)(1-bx)} &>&1-a+abx \nonumber \\  \frac{ay+bx-abxy}{1- (1-ay)(1-bx)}+\frac{abxy-ay}{1- (1-ay)(1-bx)}& \ > \ & 1-a+abx  \nonumber \\ \frac{bx}{1- (1-ay)(1-bx)}&>& 1-a+abx.
\end{eqnarray}
Rearranging the inequality yields our intermediate step:
\begin{equation}\label{ycoordfirsthalf}
\frac{bx}{1 -a +abx}\ >\ 1- (1-ay)(1-bx).
\end{equation}

For the second part of the proof, recall that for a point in Region I
\begin{equation}
x\ <\ 1-(1-ax)(1-by)^n.
\end{equation}
This allows us to write $1-(1-ax)(1-by)^n = x + c$ for some $c > 0$ such that $x<x+c<1$.
Since $c > 0$ and $a,b<1$ we see that\begin{eqnarray}
bc - abc &>& 0 \nonumber \\ bx + bc - abx -abc +ab^2x^2 + ab^2xc &>& bx -abx +ab^2x^2 + ab^2xc \nonumber \\ b(x+c)(1 -a +abx) &>& bx(1 - a +ab(x+c)).
\end{eqnarray}
Thus
\begin{equation}
\frac{b(x+c)}{1-a+ab(x+c)}\ >\ \frac{bx}{1-a+abx},
\end{equation}
that is,
\begin{equation}\label{ycoordsecondhalf}
\frac{b(1-(1-ax)(1-by)^n)}{1-a+ab(1-(1-ax)(1-by)^n)}\ >\ \frac{bx}{1-a+abx}.
\end{equation}
The desired result follows from \eqref{ycoordfirsthalf} and \eqref{ycoordsecondhalf}.

The proof showing that all points in Region III iterate inside Region III under $F$ is essentially the same, now taking \eqref{3x'} and \eqref{3y'} as the initial inequalities. Thus given a point in Region III, we find that its iterated x-coordinate satisfies \eqref{3x} and its iterated y-coordinate satisfies \eqref{3y}.
\end{proof}

\subsection{Limiting Behavior}

Before proving Theorem \ref{thm:mainresult}, II(b) in general, we concentrate on the special case when the initial state is in Region I or III.

\begin{lem}\label{13fixedpoint} Let $b > (1-a)/\sqrt{n}$. All non-trivial points in Regions \emph{I} and \emph{III} iterate to the non-trivial fixed point under $F$.
\end{lem}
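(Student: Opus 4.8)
The plan is to promote Lemmas~\ref{13directions} and \ref{13stay} to a convergence statement via a monotone-sequence argument, continuity of $F$, and the enumeration of fixed points from Section~\ref{sec:detfixedpointsF}. Fix a non-trivial point $(x_0,y_0)$ in Region~I and set $(x_t,y_t) := F^{\,t}(x_0,y_0)$. By Lemma~\ref{13stay} every iterate lies in Region~I, so Lemma~\ref{13directions} applies at each step and the sequences $(x_t)$ and $(y_t)$ are each strictly increasing; since $F$ maps $[0,1]^2$ into itself (because $f_1,f_2\in[0,1]$ when $x,y\in[0,1]$), both sequences are bounded above by $1$ and hence converge, say to $(x_*,y_*)$. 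As $F$ is continuous (indeed polynomial), letting $t\to\infty$ in $(x_{t+1},y_{t+1})=F(x_t,y_t)$ gives $(x_*,y_*)=F(x_*,y_*)$, so the limit is a fixed point. By Theorem~\ref{thm:mainresult}, I(a)/II(a), the only fixed points of $F$ in $[0,1]^2$ are $(0,0)$ and the non-trivial fixed point $(x_f,y_f)$. A point of Region~I satisfies $x<\phi_1(y)$ and $y<\phi_2(x)$, and since $\phi_1(0)=\phi_2(0)=0$ this forces $x_0>0$ and $y_0>0$; by monotonicity $x_*\ge x_0>0$, so $(x_*,y_*)\ne(0,0)$, hence $(x_*,y_*)=(x_f,y_f)$, which proves the claim for Region~I.

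For Region~III the argument is the mirror image: by Lemma~\ref{13stay} the iterates stay in Region~III, by Lemma~\ref{13directions} both coordinate sequences are strictly decreasing, they are bounded below by $0$, so they converge to some $(x_*,y_*)$, again a fixed point by continuity. The one genuinely new point is to exclude $(0,0)$ as the limit, and this is where the hypothesis $b>(1-a)/\sqrt n$ enters. A point of Region~III satisfies $x>\phi_1(y)$ and $y>\phi_2(x)$; using $\phi_1'(0)=bn/(1-a)$ from \eqref{eq:derivsatoriginphis} together with $\phi_2(x)=\tfrac{bx}{1-a+abx}$, near the origin the first inequality gives $y<\tfrac{1-a}{bn}x(1+o(1))$ while the second gives $y>\tfrac{b}{1-a}x(1+o(1))$, and these are incompatible for small $x>0$ since $b>(1-a)/\sqrt n$ implies $\tfrac{b}{1-a}>\tfrac1{\sqrt n}>\tfrac{1-a}{bn}$. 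Hence some neighborhood of $(0,0)$ meets Region~III only in the origin itself; the iterates, being non-trivial points of Region~III, never enter it, so $(x_*,y_*)\ne(0,0)$, and therefore $(x_*,y_*)=(x_f,y_f)$.

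The main obstacle is exactly this last verification. In Region~I the increasing sequence is automatically bounded away from the trivial fixed point, but in Region~III the decreasing sequence could a priori converge to $(0,0)$, and ruling this out requires knowing that Region~III does not touch the origin, i.e.\ that the curves $\phi_1$ and $\phi_2$ leave the origin in the order that makes the Region~III wedge empty there --- which is precisely the content of the threshold $b>(1-a)/\sqrt n$ (compare the proof of Theorem~\ref{thm:mainresult}, I(a)). Everything else is routine: monotone convergence, continuity of $F$, and the fixed-point count already established.
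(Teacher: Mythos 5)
Your proof is correct and follows essentially the same route as the paper: invariance of Regions I and III (Lemma~\ref{13stay}), monotonicity of the coordinate sequences (Lemma~\ref{13directions}), monotone convergence, continuity of $F$ to identify the limit as a fixed point, and the fixed-point enumeration from Theorem~\ref{thm:mainresult}. The only place you add substance is in ruling out $(0,0)$ as the limit for Region~III by showing that, when $b>(1-a)/\sqrt n$, the wedge of Region~III is bounded away from the origin; the paper's wording (``this fixed point must be in Region~III'') glosses over the fact that the limit lies only in the \emph{closure} of Region~III, so your explicit check of the slopes $\phi_1'(0)$ and $\phi_2'(0)$ near the origin is a welcome tightening of the same argument.
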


\begin{proof}
Consider any non-trivial point $z_{0} = (x_0, y_0)$ in Region I. Define a sequence by setting $z_{t+1} = F\left(z_{t}\right)$. By Lemma \ref{13directions}, we know that $z_{t}$ is monotonically increasing in each component, and is always in Region I. Furthermore, we know that $z_{t}$ is bounded by $(x_f, y_f)$ (the unique, non-trivial fixed point). Thus, $z_{t}$ must converge. Suppose it converges to $z'$, i.e., $\lim_{t \rightarrow \infty} z_{t} = z'$. We consider the iterate of $z'$. Since $F$ is continuous, we have
\be
F\left(z'\right)\ =\ F\left(\lim_{t\rightarrow\infty}z_{t}\right)\ =\ \lim_{t\rightarrow\infty}F\left(z_{t}\right)\ = \ \lim_{t\rightarrow\infty}z_{t+1}\ =\ \lim_{t\rightarrow\infty}z_{t}\ =\ z'.
\ee
Thus, $z'$ is a fixed point. Since $z_{0} > (0, 0)$ and $z_{t}$ is increasing, $z'$ cannot be the trivial fixed point. Thus $z'$ must be the unique non-trivial fixed point. For Region III, we have a monotonically decreasing and bounded sequence $z_{t}$ that must thus converge to a fixed point. By Lemma \ref{13stay}, this fixed point must be in Region III and thus can only be the unique non-trivial fixed point.
\end{proof}

\subsection{Proofs}

The essential idea is the following. Consider any rectangle in $[0,1]^2$ whose lower left vertex is not $(0,0)$ (the trivial fixed point introduces some complications, but we can bypass these by simply taking larger and larger rectangles). Assume the lower left and upper right vertices are in Regions I and III respectively. We show that the image of this rectangle under $F$ is strictly contained in the rectangle by showing that the image of the lower left (respectively, upper right) point has both coordinates smaller (respectively, larger) than any other iterate. As the lower left and upper right vertices iterate to the non-trivial fixed points (since they are in Regions I and III), so too do all the other points in the rectangle, as the diameters of the iterations of the rectangle tend to zero.

We make the above argument precise. Let the rectangle be all points $(x,y) \in [0,1]^2$ with $x_{\ell} \le x \le x_u$ and $y_{\ell} \le y \le y_u$. Recall $F(x,y) = (f_1(x,y), f_2(x,y))$. We choose a point $(x,y)$ in our rectangle and let $z_{0,1}(x,y) = x$ and $z_{0,2}(x,y) = y$. We define the sequence $z_t(x,y) = (z_{t,1}(x,y), z_{t,2}(x,y))$ ($t$ a positive integer) by $z_{t+1,1}(x,y)$ $=$ $f_1(z_{t,1}(x,y)$, $z_{t,2}(x,y))$ and $z_{t+1,2}(x,y)$ $=$ $f_2(z_{t,1}(x,y)$, $z_{t,2}(x,y))$. We show by induction that $z_{t,1}(x_{\ell},y_{\ell})$ $\leq$ $z_{t,1}(x,y)$ $\leq$ $z_{t,1}(x_u,y_u)$ and $z_{t,2}(x_{\ell},y_{\ell})$ $\leq$ $z_{t,2}(x,y)$ $\leq$ $z_{t,2}(x_u,y_u)$. In other words, the image of any of our rectangles is contained in the rectangle, and the lower left vertex iterates to the lower left vertex of the new region (and similarly for the top right vertex).

The base case is given by our choice of $\left(x_{\ell},y_{\ell}\right)$ and $\left(x_u,y_u\right)$, so we proceed to show the inductive step. Suppose that we have $z_{t,1}(x_{\ell},y_{\ell}) \leq z_{t,1}(x,y)$ and $z_{t,2}(x_{\ell},y_{\ell}) \le z_{t,2}(x,y)$. Then
\bea 1-az_{t,1}(x_{\ell},y_{\ell})&\ \geq\ &1-az_{t,1}(x,y) \nonumber\\
 1-bz_{t,2}(x_{\ell},y_{\ell})&\ \geq\ & z_{t,2}(x,y), \eea
which implies that
\be (1-az_{t,1}(x_{\ell},y_{\ell}))(1-bz_{t,2}(x_{\ell},y_{\ell}))^n\ \geq\ (1-az_{t,1}(x,y))(1-bz_{t,2}(x,y))^n \ee
for any $n \geq 1$. Then
\be 1-(1-az_{t,1}(x_{\ell},y_{\ell}))(1-bz_{t,2}(x_{\ell},y_{\ell}))^n\ \leq\ 1-(1-az_{t,1}(x,y))(1-bz_{t,2}(x,y))^n. \ee

That is, $z_{t+1,1}(x_{\ell},y_{\ell}) \leq z_{t+1,1}(x,y)$. Furthermore, we have that
\bea 1-az_{t,2}(x_{\ell},y_{\ell})&\ \geq\ & 1-az_{t,2}(x,y) \nonumber\\
1-bz_{t,1}(x_{\ell},y_{\ell}) &\geq &1-bz_{t,1}(x,y), \eea
which implies that
\be (1-az_{t,2}(x_{\ell},y_{\ell}))(1-bz_{t,1}(x_{\ell},y_{\ell}))\ \geq\ (1-az_{t,2}(x,y))(1-bz_{t,1}(x,y)). \ee
Then
\be 1-(1-az_{t,2}(x_{\ell},y_{\ell}))(1-bz_{t,1}(x_{\ell},y_{\ell}))\ \leq\ 1-(1-az_{t,2}(x,y))(1-bz_{t,1}(x,y)). \ee
That is, $z_{t+1,2}(x_{\ell},y_{\ell}) \leq z_{t+1,2}(x,y)$.

By a similar argument, we see that $z_{t,1}(x,y) \leq z_{t,1}(x_u,y_u)$ and $z_{t,2}(x,y) \leq z_{t,2}(x_u,y_u)$ implies that $z_{t+1,1}(x,y) \leq z_{t+1,1}(x_u,y_u)$ and $z_{t+1,2}(x,y) \leq z_{t+1,2}(x_u,y_u)$. \\

Thus $z_{t,1}(x_{\ell},y_{\ell}) \leq z_{t,1}(x,y) \leq z_{t,1}(x_u,y_u)$ and $z_{t,2}(x_{\ell},y_{\ell}) \leq z_{t,2}(x,y) \leq z_{t,2}(x_u,y_u)$ for all $t \in \N$. Taking the limit, we have
\be \lim_{t\to\infty} z_{t,1}(x_{\ell},y_{\ell}) \ \le \  \lim_{t\to\infty} z_{t,1}(x,y) \ \le \  \lim_{t\to\infty} z_{t,1}(x_u,y_u) \ee
and
\be \lim_{t\to\infty} z_{t,2}(x_{\ell},y_{\ell}) \ \le \  \lim_{t\to\infty} z_{t,2}(x,y) \ \le \  \lim_{t\to\infty} z_{t,2}(x_u,y_u) \ee

Since $\left(x_{\ell},y_{\ell}\right)$ is in Region I and $\left(x_u,y_u\right)$ is in Region III, the inequalities become
\be x_f \ \le \  \lim_{t\to\infty} z_{t,1}(x,y) \ \le \  x_f \ee
and
\be y_f \ \le \  \lim_{t\to\infty} z_{t,2}(x,y) \ \le \  y_f. \ee

Thus $\lim_{t\to\infty} z_{t,1}(x,y) = x_f$ and $\lim_{t\to\infty} z_{t,2}(x,y) = y_f$, that is,  $\left(x,y\right)$ iterates to  $\left(x_f,y_f\right)$.

We can isolate from the proof Theorem \ref{thm:mainresult}, II(b) information about the rapidity of convergence.

\begin{cor} Assume $b > (1-a)/\sqrt{n}$. Given a point $(x,y) \in (0,1)^2$, consider a rectangle with $(x,y)$ on the boundary and vertices $(x_{\rm I}, y_{\rm I})$ in Region \emph{I} and $(x_{\rm III}, y_{\rm III})$ in Region \emph{III}. Then the amount of time it takes for $(x,y)$ to converge to the unique, non-trivial fixed point is the maximum of the time it takes $(x_{\rm I}, y_{\rm I})$ and $(x_{\rm III}, y_{\rm III})$ to converge.
\end{cor}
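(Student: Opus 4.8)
The plan is to read off everything from the coordinatewise squeeze already proved in the course of Theorem \ref{thm:mainresult}, II(b). Write $(x_\ell,y_\ell)=(x_{\rm I},y_{\rm I})$ for the Region I corner, $(x_u,y_u)=(x_{\rm III},y_{\rm III})$ for the Region III corner, and let $z_t(\cdot)=(z_{t,1}(\cdot),z_{t,2}(\cdot))$ be the $t$-th iterate of $F$ as in that proof. The induction carried out there shows that for every $t\in\N$ and every point $(x,y)$ of the rectangle spanned by these two corners,
\[
z_{t,1}(x_\ell,y_\ell)\ \le\ z_{t,1}(x,y)\ \le\ z_{t,1}(x_u,y_u),\qquad z_{t,2}(x_\ell,y_\ell)\ \le\ z_{t,2}(x,y)\ \le\ z_{t,2}(x_u,y_u).
\]
By Lemma \ref{13directions} the orbit of $(x_\ell,y_\ell)$ is strictly increasing in each coordinate and, by Lemma \ref{13fixedpoint}, converges to $(x_f,y_f)$; hence $z_{t,i}(x_\ell,y_\ell)$ increases to the $i$-th coordinate of $(x_f,y_f)$ from below, and similarly $z_{t,i}(x_u,y_u)$ decreases to it from above. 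Thus $x_f$ and $y_f$ lie between the two corner iterates at every time $t$, and the squeeze gives
\[
\bigl|z_{t,1}(x,y)-x_f\bigr|\ \le\ \max\bigl(x_f-z_{t,1}(x_\ell,y_\ell),\ z_{t,1}(x_u,y_u)-x_f\bigr),
\]
with the analogous bound for the second coordinate.

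Next I would make "time to converge" precise: fix a tolerance $\varepsilon>0$ and let $T_\varepsilon(p)$ be the least $T$ with $\|z_t(p)-(x_f,y_f)\|<\varepsilon$ for all $t\ge T$. The displayed inequalities show that once $t$ is large enough that both corner orbits are within $\varepsilon$ of $(x_f,y_f)$ in each coordinate, the orbit of $(x,y)$ is too; hence
\[
T_\varepsilon(x,y)\ \le\ \max\bigl(T_\varepsilon(x_{\rm I},y_{\rm I}),\ T_\varepsilon(x_{\rm III},y_{\rm III})\bigr).
\]
For the reverse direction one notes that the two corners are themselves points of the rectangle, so the worst-case convergence time over the whole rectangle is exactly $\max(T_\varepsilon(x_{\rm I},y_{\rm I}),T_\varepsilon(x_{\rm III},y_{\rm III}))$; this maximum is therefore precisely the time after which $(x,y)$ — indeed every point of the rectangle — is guaranteed to have converged, which is the assertion of the corollary. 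Letting $\varepsilon\to 0$ if one wants a threshold-free statement is then harmless.

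The only real subtlety is definitional rather than analytic: pinning down what convergence time means so that the two monotone sequences bracketing $(x_f,y_f)$ can be invoked cleanly. There is no new estimate to prove — the bracketing is inherited verbatim from the inductive step in the proof of Theorem \ref{thm:mainresult}, II(b), together with Lemmas \ref{13directions} and \ref{13fixedpoint} — so the "hard part" is merely the bookkeeping of the upper and lower bracketing orbits, which is routine.
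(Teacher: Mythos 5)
Your proof is correct and follows exactly the route the paper intends: the corollary is stated as an immediate consequence of the coordinatewise bracketing established in the proof of Theorem~\ref{thm:mainresult}, II(b), together with the monotone convergence of the corner orbits from Lemmas~\ref{13directions} and~\ref{13fixedpoint}, and your write-up isolates precisely that squeeze. The only place you go beyond the paper is in pinning down what ``the amount of time to converge'' means (your $T_\varepsilon$ formalization) and in noting that the literal equality in the corollary should be read as a guaranteed upper bound attained by the corners; both are sensible clarifications of what is, in the paper, an unproved assertion.
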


%%%%%%%%%%%%%%%%%%%%%%%%%%%%%%%%%%%%%%%%%%%%%%%%%%%%%%%%%%%%%%%%%%%%%%%%%%%%%%%%%%%%%%%%%%%%%%%%%%%%%%%%%%%%%%%%%%%%%%%%%%%%%%%%%%%%%%%%
%%%%%%%%%%%%%%%%%%%%%%%%%%%%%%%%%%%%%%%%%%%%%%%%%%%%%%%%%%%%%%%%%%%%%%%%%%%%%%%%%%%%%%%%%%%%%%%%%%%%%%%%%%%%%%%%%%%%%%%%%%%%%%%%%%%%%%%%
%%%%%%%%%%%%%%%%%%%%%%%%%%%%%%%%%%%%%%%%%%%%%%%%%%%%%%%%%%%%%%%%%%%%%%%%%%%%%%%%%%%%%%%%%%%%%%%%%%%%%%%%%%%%%%%%%%%%%%%%%%%%%%%%%%%%%%%%

\section{Future Research}\label{sec:behavior}

While we are able to determine the limiting behavior of any configuration, a fascinating question is to understand the path iterates take when converging to the fixed point. Based on some numerical computations and some partial theoretical results, we make the following conjecture.

\begin{conj} Let $b > (1-a)/\sqrt{n}$. Points in Regions \emph{II} and \emph{IV} exhibit one of two behaviors, depending on $a, b, n$. Either:
\begin{enumerate}
\item All points in Region \emph{II} iterate outside Region \emph{II} and all points in Region \emph{IV} iterate outside Region \emph{IV} ("flipping behavior"), or
\item All points in Region \emph{II} iterate outside Region \emph{IV} and all points in Region \emph{IV} iterate outside Region \emph{II} ("non-flipping behavior").
\end{enumerate}
\end{conj}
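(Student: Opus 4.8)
The plan is to analyze the sign of the change in the ``partial fixed point defect'' functions $g_1$ and $g_2$ from \eqref{eq:defng1xyfixedpoint} and \eqref{eq:defng2xyfixedpoint} under one iteration of $F$, exactly as was done in Lemmas \ref{13directions} and \ref{13stay} for Regions I and III. Recall that Region II is cut out by $g_1(x,y) < 0$ and $g_2(x,y) > 0$ (the point lies to the right of the curve $x = \phi_1(y)$ but below the curve $y = \phi_2(x)$), while Region IV is cut out by $g_1(x,y) > 0$ and $g_2(x,y) < 0$. The two behaviors in the conjecture correspond to whether iteration preserves or swaps the pair of signs $(\operatorname{sgn} g_1, \operatorname{sgn} g_2)$: ``flipping'' means $(-,+) \mapsto (+,-)$ and $(+,-)\mapsto(-,+)$, while ``non-flipping'' means $(-,+)\mapsto(-,+)$ and $(+,-)\mapsto(+,-)$. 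So I would try to show that, for fixed $a,b,n$ with $b>(1-a)/\sqrt{n}$, the sign of $g_1(F(x,y))$ on Region II (equivalently $g_2(F(x,y))$, by a parallel argument) is constant across the whole region and across both of Regions II and IV.

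First I would write out $g_1(f_1(x,y),f_2(x,y))$ and $g_2(f_1(x,y),f_2(x,y))$ explicitly and, following the two-step trick in the proof of Lemma \ref{13stay}, factor each as a product of a ``horizontal'' comparison and a ``vertical'' comparison: e.g.\ compare $f_1(x,y)$ to $\phi_1(f_2(x,y))$ by first replacing $f_2(x,y)$ with $y$ (monotonicity of $\phi_1$, using the sign of $g_2(x,y)>0$ in Region II) and then comparing $f_1(x,y)$ with $\phi_1(y)$ (using the sign of $g_1(x,y)<0$). The point of this decomposition is that in Region II the two partial-order constraints push in \emph{opposite} directions, so the sign of the composite is not automatic the way it was in Regions I and III — it genuinely depends on which of the two competing effects dominates, and that is precisely where the dichotomy $a,b,n$-dependence enters. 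I would aim to reduce ``which effect dominates'' to a single inequality between rational expressions in $a,b,n$ (and the coordinates), then show that inequality either holds on all of Region II $\cup$ IV or fails on all of it, by checking it is monotone in $x$ and $y$ along the region or by evaluating it at the extreme corners (the non-trivial fixed point, the point $(\phi_1(1),1)$, the point $(1,\phi_2(1))$, etc.) — much as Theorem \ref{thm:mainresult}, II(a) was proved by tracking where the curves $\phi_1,\phi_2$ meet the boundary of $[0,1]^2$.

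I expect the main obstacle to be establishing that the sign is \emph{uniform over the region} rather than merely determined at a point: the composite defect $g_i \circ F$ is a high-degree rational function of $(x,y)$ (degree grows with $n$ through $(1-by)^n$), so a brute-force sign analysis is unattractive. The most promising route is to show the defect $g_i(F(x,y))$ vanishes on Region II only along a curve through the non-trivial fixed point that stays outside the open region (or exits only through the boundary shared with Regions I/III, where Lemmas \ref{13directions}–\ref{13stay} already control the sign), so that by connectedness of Region II the sign is constant there; and then to identify the single scalar condition on $(a,b,n)$ that decides that sign and to verify it is the \emph{same} condition for Region IV (this symmetry should follow from the structural similarity of $g_1$ and $g_2$ under the iteration, together with the fact that both regions abut the fixed point). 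Linearizing $F$ at the non-trivial fixed point and examining which eigendirection of the Jacobian is expanding would give a clean heuristic for which of the two behaviors occurs and would suggest the exact threshold in $(a,b,n)$; making the local picture global is the part I would expect to require the most work, and it is plausibly why the authors leave this as a conjecture rather than a theorem.
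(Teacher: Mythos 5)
This statement is not proved in the paper: it is explicitly a \emph{Conjecture}, offered with the remark that it rests on ``numerical computations and some partial theoretical results,'' and immediately followed by the open problem of finding ``simple conditions involving $a, b$ and $n$ for each of the two possibilities.'' There is therefore no authors' proof for your plan to be measured against; what you have produced is a research outline for an open problem, and you say as much in your closing sentences.

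As an outline it is sound. The sign-pair encoding is correct (Region II is $\{g_1<0,\,g_2>0\}$, Region IV is $\{g_1>0,\,g_2<0\}$, with I $=(+,+)$ and III $=(-,-)$), and you correctly diagnose why the Lemma~\ref{13stay} machinery does not transfer: there the two partial-fixed-point inequalities reinforce each other under iteration, while in II and IV they compete, which is almost certainly why the authors stop at a conjecture. Two cautions are worth recording. First, your restatement as a strict sign-pair swap $(-,+)\mapsto(+,-)$ versus preservation $(-,+)\mapsto(-,+)$ is \emph{stronger} than the conjecture as written: alternative (1) only asserts that points of II leave II (they may land in I or III rather than IV), and alternative (2) only that they avoid IV, so an image landing in I or III satisfies both clauses. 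The weakest claim you actually need is that the sign pairs $(-,+)$ and $(+,-)$ do not \emph{both} occur on Region II (and symmetrically on IV), i.e.\ that the zero curves of $g_1\circ F$ and $g_2\circ F$ do not cross transversally inside the open region; that is strictly weaker than constancy of either individual sign and may be a more tractable target. Second, $g_i\circ F$ is polynomial, not rational, in $(x,y)$; the rational expressions $\phi_1,\phi_2$ enter only through the intermediate comparisons in the two-step decomposition. Your instinct to read the dichotomy off the Jacobian at $(x_f,y_f)$ --- the matrix the paper calls $A_f$ in Appendix~\ref{sec:eigenvaluefxdet} --- is the right heuristic, since the local sign of $g_i\circ F$ near the fixed point is governed by $\nabla g_i(x_f,y_f)\,A_f$. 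The step you flag as the real obstacle, promoting that local picture to a uniform statement over all of II and IV, is indeed where the work lies; nothing you propose is wrong, but no step of it is yet a proof, consistent with the statement's status as an open conjecture.
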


It would be interesting to find simple conditions involving $a, b$ and $n$ for each of the two possibilities.

Another topic for future research is to apply the methods of this paper to more general models. We present some partial results to a system which quickly follow from our arguments. We may consider star graphs with more than two levels, i.e., graphs whose spokes are themselves surrounded by additional spokes, which might themselves be surrounded by additional spokes, et cetera. We recall that \eqref{eq:origmodel} and \eqref{eq:zeta} give us the following general system:
\bea\label{eq:generalpit}
p_{i, t} & \ = \ & \left(1-p_{i, t-1}\right)\prod_{j \sim i} \left(1-\beta p_{j, t-1}\right)+ \delta p_{i, t} \prod_{j \sim i} \left(1 - \beta p_{j, t-1}\right) \nonumber\\
& \ = \ & 1 - \left(1-a p _{i, t-1}\right)\prod_{j \sim i}\left(1 - b p_{j, t-1}\right).
\eea
\emph{We keep the simplifying assumption that at each level, the number of spokes is the same.} In the $3$-level case, this means that we consider a graph with $n_{1}$ spoke nodes around a hub node, and $n_{2}$ spoke nodes around each of the $n_{1}$ spokes. Generalizing our result in the 2-dimensional case that in the limit all spokes have the same behavior, we can argue by induction that all nodes on the same `level' approach a common, limiting value. Thus, in the $\ell$-dimensional case, we are reduced to a system in $\ell$ unknowns.

We first consider the $3$-dimensional case. If we let $x_{t}$ be the probability that the hub is infected (the level 1 node), $y_{t}$ be the probability that a spoke of the hub is infected (the level 2 nodes), and $z_{t}$ be the probability a spoke of a spoke is infected (the level 3 nodes), \eqref{eq:generalpit} gives us the following system:
\be\label{eq:defof3DF}
F\vectthree{x}{y}{z} \ = \ \vectthree{1-\left(1-ax\right)\left(1-by\right)^{n_{1}}}{1-\left(1-ay\right) \left(1-bx\right)\left(1-bz\right)^{n_{2}}}{1-\left(1-az\right)\left(1-by\right)}.
\ee

We again look for partial fixed points by solving
\bea
x & \ = \ & f_{1}\left(x, y, z\right) \nonumber\\
y & \ = \ & f_{2}\left(x, y, z\right) \nonumber\\
z & \ = \ & f_{3}\left(x, y, z\right),
\eea which gives the following surfaces:
\bea
\phi_{1}\left(y, z\right) & \ = \ & x \ = \ \frac{1-\left(1-by\right)^{n_{1}}}{1-a\left(1-by\right)^{n_{1}}} \nonumber\\
\phi_{2}\left(x, z\right) & \ = \ & y \ = \ \frac{1-\left(1-bx\right)\left(1-bz\right)^{n_{2}}}{1-ay\left(1-bx\right)\left(1-bz\right)^{n_{2}}} \nonumber\\
\phi_{3}\left(x, y\right) & \ = \ & z \ = \ \frac{by}{1-a+aby}.
\eea

If we take the intersection of $\phi_{1}$ with the plane defined by $\phi_{3}$ and $\phi_{2}$ with the plane defined by $\phi_{3}$, we get two curves that look a lot like our curves from the original (2-dimensional) case. We can express these curves in terms of $x$ and $y$. The first curve is already done. For the second, we can write
\be
x \ = \ \frac{y-1}{b\left(1-ay\right)\left(1-bz\right)^{n_{2}}} + \frac{1}{b}.
\ee
Since we know that $z = by / \left(1-a+aby\right) $ we can write this as
\be
x \ = \ \frac{y-1}{b\left(1-ay\right)\left(1-\frac{b^{2}y}{1-a+aby}\right)^{n_{2}}} + \frac{1}{b}.
\ee
We now have two curves, $\phi_{1}\left(y\right)$ and $\phi_{2}\left(y\right)$. If we take their derivatives at $0$, we obtain
\bea
\phi_{1}'\left(0\right) & \ = \ & \frac{bn_{1}}{1-a} \nonumber\\
\phi_{2}'\left(0\right) & \ = \ & \frac{\left(1-a\right)^{2}-b^{2}n_{2}}{b\left(1-a\right)}.
\eea
Doing some analysis on their second derivatives shows that $\phi_{1}''\left(y\right) < 0$ and $\phi_{2}''\left(y\right) > 0$ for all $y \in \left[0, 1\right]$. Thus $\phi_{1}\left(y\right)$ is convex and $\phi_{2}\left(y\right)$ is concave. All the pieces are now in place to argue as in the proof of Theorem \ref{thm:mainresult}, I(a) and II(a). We find that there exists a unique nontrivial fixed point if and only if
\be
\phi_{1}'\left(0\right)\ >\ \phi_{2}'\left(0\right),
\ee
i.e.,
\be
b\ >\ \frac{1-a}{\sqrt{n_{1}+n_{2}}}.
\ee.

This leads to the following conjecture (which is known for $\ell = 2$ or 3).

\begin{conj} Consider a generalized spoke and star graph with $\ell$ levels. Level one consists of one node (the hub), level two consists of $n_1$ spokes connected to the central hub, and for each node of level $k$ there are $n_k$ nodes connected to it (and these are the level $k+1$ nodes). There is a unique, non-trivial fixed point if and only if $b > (1-a)/\sqrt{n_1+\cdots+n_{\ell-1}}$. \end{conj}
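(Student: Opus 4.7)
The plan is to mirror the two-curve approach that succeeded for $\ell = 2, 3$, reducing the $\ell$-dimensional fixed-point system to the intersection of one convex and one concave curve in the $(x_2, x_1)$-plane and then invoking Lemmas \ref{lem:phicurvechar}--\ref{lem:concaveconvexmax} unchanged. The threshold will emerge from comparing the two tangent slopes of these curves at the origin.

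First, I will set up a recursive elimination from the leaves inward. Define $h_\ell(w) := bw/(1-a+abw)$ and, for $k$ descending from $\ell-1$ to $3$, define $h_k(w)$ implicitly by \[ x \;=\; 1 - (1-ax)(1-bw)\bigl(1-b\,h_{k+1}(x)\bigr)^{n_k}. \] The implicit function theorem produces a unique $C^2$ solution $x = h_k(w)$ with $h_k(0) = 0$. I will prove by downward induction on $k$ that each $h_k$ maps $[0,1]$ into $[0,1]$, is strictly increasing, and is concave. The base case $k = \ell$ is a direct calculation that mirrors the concavity of $\phi_2$ in Lemma \ref{lem:phicurvechar}, and the inductive step follows by implicit differentiation, since $1 - b\,h_{k+1}(x)$ is a strictly decreasing function of $x$ bounded in $(0,1]$.

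Next, in the $(x_2, x_1)$-plane, define the hub curve $\phi_1(x_2) := [1-(1-bx_2)^{n_1}]/[1-a(1-bx_2)^{n_1}]$, which is already known to be increasing convex by Lemma \ref{lem:phicurvechar}, and define the \emph{chained level-2 curve} $\phi_2(x_2)$ by substituting $x_3 = h_3(x_2)$ into the level-2 fixed point equation and solving for $x_1$. Concavity of $\phi_2$ will follow from concavity of $h_3$ via a second-derivative computation parallel to that in Lemma \ref{lem:phicurvechar}. Both curves pass through the origin, so the intermediate value and convex-concave intersection arguments from the proofs of Theorem \ref{thm:mainresult} I(a) and II(a) carry over verbatim: there is a unique nontrivial fixed point precisely when $\phi_1'(0) > \phi_2'(0)$, and only the trivial fixed point when $\phi_1'(0) \le \phi_2'(0)$.

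Finally, I need to verify that $\phi_1'(0) > \phi_2'(0)$ simplifies exactly to $b > (1-a)/\sqrt{n_1+\cdots+n_{\ell-1}}$; this is \textbf{the main obstacle}. Linearizing the recursion from the first step at the origin yields the continued-fraction slopes $\mu_\ell = b/(1-a)$ and $\mu_k = b/(1 - a - n_k b \mu_{k+1})$, so $\phi_2'(0) = (1-a - n_2 b \mu_3)/b$ while $\phi_1'(0) = n_1 b/(1-a)$. For $\ell = 2, 3$ this collapses immediately to the conjectured formula, as the body of the paper verifies. For larger $\ell$, the equality $\phi_1'(0) = \phi_2'(0)$ unfolds to a polynomial identity in $b^2$ whose leading coefficient is $(n_1+\cdots+n_{\ell-1})$; the hard part will be to prove that after clearing the nested denominators, every cross term (such as the $n_1 n_3 b^4$ contribution that appears at $\ell = 4$) cancels against matching contributions on the other side. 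I expect this to require a delicate combinatorial identity in the $n_i$'s, reminiscent of a tridiagonal determinant expansion, and it is precisely the step where the conjecture stands or falls: success establishes the closed-form threshold, while any residual polynomial term would indicate that the conjecture must be sharpened to a root of that polynomial rather than the symmetric expression $(1-a)/\sqrt{n_1+\cdots+n_{\ell-1}}$.
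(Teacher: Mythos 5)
The statement you are addressing is stated in the paper only as a conjecture: the authors verify it for $\ell=2$ (the main theorem) and sketch $\ell=3$ by intersecting a convex curve with a concave curve obtained after eliminating the leaf level, which is exactly the reduction you propose, so your framework is the natural continuation of theirs. The genuine problem is the step you yourself flag as the main obstacle: the hoped-for cancellation does not occur, and in fact your own linearization shows it. With $c=1-a$, your recursion $\mu_\ell=b/c$, $\mu_k=b/(c-n_k b\mu_{k+1})$ gives, already at $\ell=4$, the threshold equation
\be
\frac{n_1b^2}{c}\ =\ c-\frac{n_2b^2c}{c^2-n_3b^2}
\quad\Longleftrightarrow\quad
c^4-(n_1+n_2+n_3)\,b^2c^2+n_1n_3\,b^4\ =\ 0,
\ee
so the residual cross term $n_1n_3b^4$ survives and the critical value is $b=(1-a)/\lambda_1$ where $\lambda_1^2=\tfrac12\bigl(n_1+n_2+n_3+\sqrt{(n_1+n_2+n_3)^2-4n_1n_3}\bigr)$. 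This $\lambda_1$ is precisely the spectral radius of the adjacency matrix of the depth-$4$ tree (consistent with the paper's own remark that the $\ell=2$ threshold is $1/\lambda_{1,A}$), and it is strictly smaller than $\sqrt{n_1+n_2+n_3}$ whenever $n_1n_3>0$; for example $n_1=n_2=n_3=1$ gives the path on four vertices with $\lambda_1=(1+\sqrt5)/2\approx1.618<\sqrt3$. Hence the symmetric threshold $(1-a)/\sqrt{n_1+\cdots+n_{\ell-1}}$ cannot be correct for general $\ell\ge4$: the coincidence $\lambda_{1,A}=\sqrt{n_1+\cdots+n_{\ell-1}}$ holds only for $\ell\le3$. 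Your method, carried through honestly, refutes (or rather corrects) the conjecture as literally stated; a proof should target $b>(1-a)/\lambda_{1,A}$ with $\lambda_{1,A}$ the largest root of the tridiagonal characteristic polynomial you describe.

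Two smaller points. First, the convexity/concavity of the chained curves is asserted rather than proved: concavity is not preserved under the nested substitutions $x_{k+1}=h_{k+1}(x_k)$ in general, so the second-derivative computation ``parallel to Lemma \ref{lem:phicurvechar}'' is a real piece of work, not a formality, and without it the uniqueness argument via Lemmas \ref{lem:concaveconvexderivs} and \ref{lem:concaveconvexmax} does not yet apply. Second, even once existence and uniqueness of the nontrivial fixed point are settled at the corrected threshold, the reduction to $\ell$ level-variables presupposes the analogue of the paper's Proposition 1.1 (all nodes on a level synchronize), which you use implicitly and which for $\ell\ge3$ is only argued informally in the paper; it deserves its own induction.
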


%%%%%%%%%%%%%%%%%%%%%%%%%%%%%%%%%%%%%%%%%%%%%%%%%%%%%%%%%%%%%%%%%%%%%%%%%%%%%%%%%%%%%%%%%%%%%%%%%%%%%%%%%%%%%%%%%%%%%%%%%%%%%%%%%%%%%%%%
%%%%%%%%%%%%%%%%%%%%%%%%%%%%%%%%%%%%%%%%%%%%%%%%%%%%%%%%%%%%%%%%%%%%%%%%%%%%%%%%%%%%%%%%%%%%%%%%%%%%%%%%%%%%%%%%%%%%%%%%%%%%%%%%%%%%%%%%
%%%%%%%%%%%%%%%%%%%%%%%%%%%%%%%%%%%%%%%%%%%%%%%%%%%%%%%%%%%%%%%%%%%%%%%%%%%%%%%%%%%%%%%%%%%%%%%%%%%%%%%%%%%%%%%%%%%%%%%%%%%%%%%%%%%%%%%%

\appendix

%\ \\
%\large \textbf{ONLY INCLUDE APPENDICES IN ARXIV VERSION!} \normalsize

\newpage

\noindent {\texttt{The following appendices highlight some of the approaches we took to tackling the problem. The first appendix describes in detail a mostly trivial analysis of the $n=1$ case, while the second appendix gives an eigenvalue approach to the problem, and the final appendix discusses some topological approaches to the problem which unfortunately did not lead to a complete solution. We include these in the arxiv version in case they may be of use to others investigating similar problems.}

\ \\

%%%%%%%%%%%%%%%%%%%%%%%%%%%%%%%%%%%%%%%%%%%%%%%%%%%%%%%%%%%%%%%%%%%%%%%%%%%%%%%%%%%%%%%%%%%%%%%%%%%%%%%%%%%%%%%%%%%%%%%%%%%%%%%%%%%%%%%%
%%%%%%%%%%%%%%%%%%%%%%%%%%%%%%%%%%%%%%%%%%%%%%%%%%%%%%%%%%%%%%%%%%%%%%%%%%%%%%%%%%%%%%%%%%%%%%%%%%%%%%%%%%%%%%%%%%%%%%%%%%%%%%%%%%%%%%%%

\section{Special Case: $n=1$}

The dynamical behavior can be directly determined in the special case $n=1$. Unfortunately, this is a very degenerate case, and many of the ideas and approaches here cannot be generalized to higher $n$, though some can (and in fact the analysis here was helpful in guessing some of the general behavior). In this case, it suffices to consider a one-variable problem, namely $f(x) = 1 - (1-ax)(1-bx)$. This is because when $n=1$ we cannot distinguish a spoke from the central node.

\subsection{Fixed Points}

We know from our main result that there is a unique non-trivial fixed point, but we show the proof of that result again here for the special case.

\begin{lem} The fixed points of $f$ are $0$ and
$\frac{a+b-1}{ab}$. If $a+b \le 1$ there is only one fixed point in
$[0,1]$, namely $0$. If $a+b > 1$ then there is a second fixed point
in $(0,1)$. \end{lem}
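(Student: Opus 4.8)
The plan is to treat the $n=1$ case as an entirely elementary one-variable computation, since the claim concerns the fixed points of $f(x) = 1 - (1-ax)(1-bx)$ on $[0,1]$. First I would set up the fixed point equation $f(x) = x$, that is, $1 - (1-ax)(1-bx) = x$. Expanding the product gives $1 - (1 - (a+b)x + abx^2) = x$, i.e. $(a+b)x - abx^2 = x$, so $abx^2 - (a+b-1)x = 0$, which factors as $x\bigl(abx - (a+b-1)\bigr) = 0$. Hence the two roots are $x = 0$ and $x = (a+b-1)/(ab)$, as claimed. Since $a, b \in (0,1)$ we have $ab \neq 0$, so this factorization is legitimate and there are exactly these two roots in $\R$.

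Next I would determine when the nontrivial root lies in $[0,1]$. The sign of $(a+b-1)/(ab)$ is the sign of $a+b-1$ since $ab > 0$; thus if $a+b \le 1$ the second root is $\le 0$, and in fact is strictly negative when $a + b < 1$ and equals $0$ (coinciding with the trivial fixed point) when $a+b=1$, so in either subcase $0$ is the only fixed point in $[0,1]$. If $a+b>1$, then $(a+b-1)/(ab) > 0$, and it remains to check this root is strictly less than $1$: the inequality $(a+b-1)/(ab) < 1$ is equivalent to $a + b - 1 < ab$, i.e. $0 < ab - a - b + 1 = (1-a)(1-b)$, which holds since $a, b \in (0,1)$. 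Therefore the second fixed point lies in $(0,1)$ exactly when $a+b>1$.

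There is really no serious obstacle here — the only thing to be careful about is the boundary case $a+b=1$, where the "second" root degenerates to $0$ and so does not give a genuinely new fixed point; the statement handles this correctly by saying there is only one fixed point in $[0,1]$ when $a+b \le 1$. One could also note for orientation that the condition $a+b>1$ is exactly the $n=1$ specialization of the main threshold $b > (1-a)/\sqrt{n}$, which for $n=1$ reads $b > 1-a$, consistent with Theorem \ref{thm:mainresult}; this is a sanity check rather than part of the proof. The whole argument is a quadratic factorization followed by two elementary sign/comparison checks.
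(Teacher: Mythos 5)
Your proof is correct and follows essentially the same route as the paper: factor $f(x)-x = abx\left(x - \frac{a+b-1}{ab}\right)$, read off the two roots, and verify $\frac{a+b-1}{ab}<1$ via $ab-(a+b-1)=(1-a)(1-b)>0$. Your explicit note about the degenerate coincidence of the two roots at $a+b=1$ is a minor clarification the paper leaves implicit.
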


\begin{proof} We have \bea f(x) - x & \ = \ & 1-(1-ax)(1-bx) - x
\nonumber\\ & = & -abx^2 + (a+b)x - x \nonumber\\ & = & x\left(abx
- (a+b-1)\right) \nonumber\\ & = &  abx \left(x -
\frac{a+b-1}{ab}\right). \eea As the fixed points are when $f(x) -
x = 0$, the first half of the lemma is clear.

We must show $\frac{a+b-1}{ab} \in (0,1)$. Clearly we need $a+b >
1$; thus in this case $\frac{a+b-1}{ab} > 0$. To show it is at
most $1$ it suffices to show $a+b-1 < ab$ or $a+b-1 -ab < 0$. As
$a < 1$ we have \bea a + b - 1 - ab & \ = \ & a - ab + b - 1
\nonumber\\ & \ = \ & a(1-b) - (1-b) \nonumber\\ & \ = \ &
(a-1)(1-b) \ < \ 0. \eea
\end{proof}

\subsection{Derivative}

Recall $f(x) = 1 - (1-ax)(1-bx)$. Thus

\begin{lem} If $a+b \le 1$ then $|f'(x)| \le
1/2$ for all $x$; if $a+b > 1$ then $f'(x) > 0$ for all $x$.
\end{lem}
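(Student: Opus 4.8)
The plan is to pass to the expanded form of $f$. Multiplying out,
\be
f(x) \ = \ 1 - (1-ax)(1-bx) \ = \ (a+b)x - abx^{2},
\ee
so $f$ is a concave parabola, with $f'(x) = (a+b) - 2abx$ and $f''(x) = -2ab$. The structural point I would exploit is that $f'$ is \emph{affine} in $x$, and $ab>0$ since $a,b\in(0,1)$, so $f'$ is strictly decreasing; hence on the probability range $x\in[0,1]$ it attains its extremes at the endpoints, $f'(0)=a+b$ and $f'(1)=a+b-2ab=a(1-b)+b(1-a)$. Everything then reduces to estimating these two numbers.

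For the case $a+b>1$, I would simply note that $f'(1)=a(1-b)+b(1-a)$ is a sum of two strictly positive quantities (using $a,b\in(0,1)$), so $f'(1)>0$; monotonicity of $f'$ then gives $f'(x)\ge f'(1)>0$ for all $x\in[0,1]$. This disposes of the second half with essentially no computation.

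For the case $a+b\le1$, since $f'$ is affine and decreasing the bound $|f'(x)|\le\foh$ on $[0,1]$ is equivalent to the two endpoint inequalities $f'(1)=a+b-2ab\ge-\foh$ and $f'(0)=a+b\le\foh$. The first is immediate because $a+b-2ab=a(1-b)+b(1-a)>0$, so it is automatically $>-\foh$. The second is the real content of the lemma, and it is the one place where the hypothesis on $a+b$ has to be pushed rather than just using $a,b\in(0,1)$; I expect it to be the main obstacle. (A clean and relevant fact here is the curvature bound $|f''(x)|=2ab\le\foh$, which follows from $a+b\le1$ by AM--GM, since then $ab\le\fof$.) The remaining ingredients --- the expansion, the affineness of $f'$, and the reduction to the two endpoint values --- are routine.
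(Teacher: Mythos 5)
Your handling of the second half is complete and coincides with the paper's argument: $f'(x)=(a+b)-2abx$ is affine and decreasing, and $f'(1)=a+b-2ab=a(1-b)+b(1-a)>0$ for $a,b\in(0,1)$, so $f'(x)\ge f'(1)>0$ on $[0,1]$. The genuine gap is in the first half, and you have located it precisely but not closed it: after correctly reducing $|f'(x)|\le\frac{1}{2}$ on $[0,1]$ to the endpoint inequalities $f'(1)\ge-\frac{1}{2}$ and $f'(0)=a+b\le\frac{1}{2}$, you defer the latter as ``the main obstacle.'' It is not merely an obstacle; it is false under the stated hypothesis. The assumption is only $a+b\le1$, and for instance $a=b=0.45$ gives $a+b=0.9\le1$ yet $f'(0)=0.9>\frac{1}{2}$, so no argument can complete your reduction: the bound $\frac{1}{2}$ in the lemma fails near $x=0$ whenever $\frac{1}{2}<a+b\le1$. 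Your curvature remark $|f''(x)|=2ab\le\frac{1}{2}$ (from $ab\le\frac{1}{4}$) is correct but bounds $f''$, not $f'$, and does not rescue the claim.

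For comparison, the paper's own proof of this half rests on the chain $|a+b-2abx|<\left|\frac{1}{2}-(a+b)\right|\le\frac{1}{2}$, which fails on the same kind of example ($a=b=\frac{1}{2}$, $x=0$ would give $1<\frac{1}{2}$); so the step you could not supply reflects an error in the lemma as stated rather than a missing idea on your part. What your endpoint argument does prove, and what the surrounding contraction remark actually needs, is $0<f'(x)\le f'(0)=a+b\le1$ for all $x\in[0,1]$ when $a+b\le1$ (strictly below $1$ for $x>0$); to get a genuine $\frac{1}{2}$ bound one must strengthen the hypothesis to $a+b\le\frac{1}{2}$. It would be worth flagging this explicitly rather than leaving the inequality $a+b\le\frac{1}{2}$ as an open step.
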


\begin{proof} We have \bea\label{eq:casen=1firstderivform} f'(x) & \ = \ &
a(1-bx) + b(1-ax) \nonumber\\ & = & (a+b)-2abx \nonumber\\ & = &
ab\left(\frac{a+b}{ab} - 2x\right). \eea Note the first derivative
is decreasing with increasing $x$.

If $a+b \le 1$ then \be |f'(x)| \ = \ |a+b - 2abx| \ < \ |1/2-(a+b)|
\ \le \ 1/2 \ee (note $a+b \le 1$ implies $ab \le 1/4$).

Assume now $a+b > 1$. When $x=0$ we have $f'(0) = a+b>1$. When $x=1$
we have $f'(1) = a+b-2ab$. Note \be a+b-2ab \ = \ a - ab + b - ab \
= \ a(1-b) + b(1-a) \ > \ 0. \ee Thus the first derivative is always
positive.
\end{proof}

\begin{rek} A trivial argument could be used to show that if $a+b \le 1$
then we have a contraction map, and everything converges to the
trivial fixed point. Thus we shall \emph{always} assume below that
$a+b > 1$, i.e., that we have a non-trivial, valid fixed point.
\end{rek}

\begin{lem} If $a+b > 1$ then we have $f'(1) < 1$. \end{lem}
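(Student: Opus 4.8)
The plan is to compute $f'(1)$ from the formula already derived in \eqref{eq:casen=1firstderivform}, namely $f'(x) = (a+b) - 2abx$, so that $f'(1) = a+b-2ab$. We are operating under the standing assumption $a+b>1$ (so there is a non-trivial fixed point), and we have $a,b \in (0,1)$. The goal is then purely an algebraic inequality: show $a+b-2ab < 1$ whenever $a,b \in (0,1)$.

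First I would rearrange the desired inequality $a+b-2ab<1$ into a manifestly nonnegative form. Subtracting, $1 - (a+b-2ab) = 1 - a - b + 2ab$. The cleanest route is to observe $1-a-b+ab = (1-a)(1-b)$, hence $1-a-b+2ab = (1-a)(1-b) + ab$. Since $a,b \in (0,1)$, both $(1-a)(1-b)$ and $ab$ are strictly positive, so their sum is strictly positive, giving $f'(1) = a+b-2ab < 1$ as claimed. (Alternatively one can write $1-a-b+2ab = \tfrac12\big((1-a)(1-b) + (1-(1-a)(1-b))\big)$-type manipulations, but the factorization above is the shortest.)

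There is essentially no obstacle here: the statement is an immediate consequence of the derivative formula in the preceding lemma together with the elementary fact that $(1-a)(1-b)+ab>0$ for $a,b\in(0,1)$. If anything, the only point worth a sentence is to note that combined with the earlier lemma's conclusion $f'(x)>0$ for all $x$ when $a+b>1$, and the fact that $f'$ is decreasing in $x$, this shows $0 < f'(x) < 1$ in a neighborhood of the non-trivial fixed point, which is the real payoff — it sets up the local contraction argument establishing convergence to the non-trivial fixed point in the $n=1$ case.

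\begin{proof} By \eqref{eq:casen=1firstderivform} we have $f'(1) = (a+b) - 2ab$. Thus
\be
1 - f'(1) \ = \ 1 - a - b + 2ab \ = \ (1-a)(1-b) + ab.
\ee
Since $a, b \in (0,1)$, both terms on the right are positive, so $1 - f'(1) > 0$, i.e., $f'(1) < 1$.
\end{proof}
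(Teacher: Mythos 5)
Your proof is correct and takes essentially the same approach as the paper: both compute $f'(1)$ explicitly and bound it by elementary algebra. The paper writes $f'(1) = a(1-b) + b(1-a) < (1-b) + b = 1$, while you equivalently show $1 - f'(1) = (1-a)(1-b) + ab > 0$; these are the same observation packaged slightly differently.
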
 \begin{proof}
This follows immediately from \be\label{eq:n=1f'1} f'(1) \ = \
a(1-b) + b(1-a) \ < \ 1-b + b \ = \ 1. \ee \end{proof}

The reason it is important to note that $f'(1) < 1$ is that we
want to show that $f$ is a contraction map, at least for a subset
of $[0,1]$. Let $x_f$ denote the fixed point $\frac{a+b-1}{ab}$.
By the Mean Value Theorem we have \bea f(x) - f(x_f) \ = \
f'(\xi)(x-x_f), \ \ \xi \in [x_f,x]; \eea if $x < x_f$ then we
should write $[x,x_f]$ for the interval. As $f(x_f) = x_f$, we can
easily see what happens to a point $x$ under $f$: \be x \ \to \
f(x) \ = \ x_f + f'(\xi)(x-x_f). \ee Thus if $x$ starts
\emph{above} $x_f$ then $f(x)$ is above $x_f$ (because the
derivative is always positive and $x>x_f$); if $x$ starts
\emph{below} $x_f$ then $f(x)$ is below $x_f$ (because the
derivative is always positive and $x<x_f$).

This suggests that we should think of $f$ as a contraction map;
the problem is we need to show the existence of a $\delta \in
(0,1)$ such that $|f'(x)| \le 1 - \delta$. If this were true, then
by the Mean Value Theorem we would immediately have $f$ is a
contraction. Unfortunately, the derivative can be larger than $1$;
for example, when $x=0$ we have $f'(0) = a+b > 1$. Thus for a
small interval about $x=0$ we do not have a contraction.

We can determine where $f$ is a contraction. We must
find $x_c$ such that $f'(x_c) = 1$; as $f'$ is decreasing then the
interval $[x_c+\epsilon,1]$ will work for any $\epsilon > 0$. We
have \bea 1  \ = \  f'(x_c) \ = \  a+b -2abx_c \eea implies \be
x_c \ = \ \frac{a+b-1}{2ab} \ = \ \frac{x_f}2. \ee

\begin{lem}\label{lem:casen=1firstderivdec01} Let $a+b>1$.
The first derivative is decreasing on $[0,1]$; thus its maximum is
$f'(0) = a+b > 1$ and its minimum is $f'(1) < 1$. Further, $f'(x)
> 1$ for $x \in [0,x_c)$, $f'(x_c) = 1$ and $f'(x) < 1$ for $x \in
(x_c,1]$. Note $f'(x) > 0$.
\end{lem}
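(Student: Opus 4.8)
The statement to prove is Lemma~\ref{lem:casen=1firstderivdec01}, asserting that for $a+b>1$, the derivative $f'(x) = a+b-2abx$ is decreasing on $[0,1]$, has maximum $f'(0)=a+b>1$ and minimum $f'(1)<1$, satisfies $f'(x)>1$ on $[0,x_c)$, $f'(x_c)=1$, $f'(x)<1$ on $(x_c,1]$ where $x_c = \tfrac{a+b-1}{2ab}$, and remains positive throughout.

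The plan is to simply assemble facts already established in the preceding lemmas and remarks of this section, since each claim is a direct consequence of something proven above. First I would recall the explicit form $f'(x) = (a+b) - 2abx$ from \eqref{eq:casen=1firstderivform}; since $a,b \in (0,1)$ we have $ab > 0$, so $f'$ is a strictly decreasing affine function of $x$ on $[0,1]$. This immediately gives that its maximum on $[0,1]$ is attained at $x=0$ and its minimum at $x=1$. Next I would invoke $f'(0) = a+b > 1$, which is just the standing hypothesis $a+b>1$, and $f'(1) = a(1-b)+b(1-a) < 1$, which is exactly the content of the lemma immediately preceding (equation~\eqref{eq:n=1f'1}); positivity of $f'(1)$ was also noted there via $a+b-2ab = a(1-b)+b(1-a)>0$. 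Combined with monotonicity, positivity at the right endpoint forces $f'(x) \ge f'(1) > 0$ for all $x \in [0,1]$, giving the final assertion $f'(x)>0$.

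For the threshold behavior at $x_c$, I would solve $f'(x_c)=1$, i.e. $(a+b)-2abx_c = 1$, which yields $x_c = \tfrac{a+b-1}{2ab} = x_f/2$ as already derived in the displayed computation just before the lemma; one should remark that $x_c \in (0,1)$ since $0 < a+b-1$ and $a+b-1 < 2ab$ (the latter because $a+b-1-ab = (a-1)(1-b) < 0$ implies $a+b-1 < ab < 2ab$), so the threshold genuinely lies in the interior of $[0,1]$. Then, because $f'$ is strictly decreasing, $f'(x) > f'(x_c) = 1$ precisely for $x < x_c$ and $f'(x) < 1$ precisely for $x > x_c$, which is the claimed trichotomy on $[0,x_c)$, $\{x_c\}$, and $(x_c,1]$.

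There is essentially no obstacle here: every ingredient — the closed form of $f'$, its monotonicity, the endpoint values, and the location of $x_c$ — has been verified in the lines immediately preceding the statement, so the proof is a short bookkeeping exercise collecting these into one place. If anything requires a word of care it is only noting $x_c \in (0,1)$ so that the phrasing ``$f'(x)>1$ for $x \in [0,x_c)$'' etc.\ is meaningful, and this follows from the $n=1$ fixed-point lemma's computation that $\tfrac{a+b-1}{ab} \in (0,1)$.
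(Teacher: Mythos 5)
Your proposal is correct and follows essentially the same bookkeeping route as the paper: cite the explicit affine form of $f'$ from \eqref{eq:casen=1firstderivform} to get monotonicity, pull the endpoint values from the earlier lemmas, and read off the trichotomy from the definition of $x_c$. You are a bit more careful than the paper in explicitly verifying $x_c \in (0,1)$, which is a welcome addition but not a departure in method.
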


\begin{proof} That $f'(x)$ is decreasing follows from
\eqref{eq:casen=1firstderivform}; the claims on $f'(0)$ and
$f'(1)$ are immediate from the other lemmas. The rest follows from
our choice of $x_c$.
\end{proof}

\subsection{Dynamical Behavior}

Remember we define $x_c$ so that $f'(x_c) = 1$. Further $f'(x)$ is
monotonically decreasing.

\begin{thm}\label{thm:dynbehn=1case} Let $x_0 \in (0,1]$ and assume $a+b>1$.
Let $x_{m+1} = f(x_m)$. Then $\lim_{m\to\infty} x_m = x_f$, where
$x_f$ is the non-trivial, valid fixed point. \end{thm}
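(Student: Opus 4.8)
The plan is to exploit the single most important structural fact established above — that $f$ is \emph{strictly increasing} on $[0,1]$ (Lemma~\ref{lem:casen=1firstderivdec01} gives $f'>0$ there) — which forces every orbit to be monotone, hence non-oscillating, and then to close the argument by monotone convergence together with the known location of the fixed points. First I would record the elementary set-up: $f(0)=0$, $f$ is increasing, and $f(1)=1-(1-a)(1-b)\in(0,1)$, so $f$ maps $[0,1]$ into $[0,f(1)]\subseteq[0,1]$ and in fact sends $(0,1]$ into $(0,1)$; thus the orbit $x_0,x_1,\dots$ with $x_{m+1}=f(x_m)$ is well defined and lies in $(0,1)$ for $m\ge 1$. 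Second, I would recall from the earlier fixed-point computation that $f(x)-x = x\big((a+b-1)-abx\big)=ab\,x\,(x_f-x)$, where $x_f=\frac{a+b-1}{ab}\in(0,1)$; hence $f(x)>x$ on $(0,x_f)$, $f(x_f)=x_f$, and $f(x)<x$ on $(x_f,1]$.

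Next I would split into two cases according to the position of $x_0$ relative to $x_f$. If $x_0\in(0,x_f]$, an easy induction shows $(x_m)$ is nondecreasing and bounded above by $x_f$: whenever $0<x_m\le x_f$ we get $x_{m+1}=f(x_m)\ge x_m$ from the sign of $f(x)-x$, and $x_{m+1}=f(x_m)\le f(x_f)=x_f$ since $f$ is increasing. A bounded monotone sequence converges to some $L$, and continuity of $f$ yields $f(L)=L$, so $L\in\{0,x_f\}$; since $(x_m)$ is nondecreasing with $x_0>0$ we have $L\ge x_0>0$, forcing $L=x_f$. If instead $x_0\in(x_f,1]$, then $x_1=f(x_0)$ satisfies $x_f=f(x_f)<f(x_0)=x_1<x_0\le 1$, so $x_1\in(x_f,1)$, and inductively $(x_m)_{m\ge 1}$ is nonincreasing and bounded below by $x_f$, hence converges to a fixed point $L\ge x_f$, i.e.\ $L=x_f$. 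A convergence rate comes for free in this second case: since $f'$ is decreasing, on $[x_f,1]$ it is at most $f'(x_f)=2-(a+b)<1$, so the Mean Value Theorem gives $|x_{m+1}-x_f|\le (2-a-b)\,|x_m-x_f|$.

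The delicate point, and really the only one, is the sub-case $x_0\in(0,x_f)$: here $0$ is a \emph{repelling} fixed point ($f'(0)=a+b>1$), so $f$ is locally expanding near $0$ and no contraction estimate is available on any neighbourhood of $0$. This is exactly why the argument above deliberately avoids derivative bounds on $(0,x_f)$ and instead relies on monotonicity of the orbit together with the fact — read off from the sign of $f(x)-x$ — that $(0,x_f)$ contains no fixed point: the orbit is then trapped between its starting point and $x_f$ and squeezed up to $x_f$. Once these ingredients are assembled, the remainder is routine bookkeeping with inequalities already in hand.
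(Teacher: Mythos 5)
Your proof is correct, and it takes a genuinely different route from the paper's. The paper partitions $(0,1]$ at $x_c = x_f/2$, the point where $f'(x_c)=1$: on $[x_c+\epsilon,1]$ it argues that $|f'|<1$ makes $f$ a contraction, while on $(0,x_c]$ it shows the orbit is strictly increasing and (by monotone convergence and absence of a fixed point there) must escape into $(x_c,1]$. You instead split at the fixed point $x_f$ itself, and on both sides run the same monotone-convergence argument, reading everything off the sign of $f(x)-x = abx(x_f-x)$ together with the monotonicity of $f$. This is tidier for two reasons. First, it eliminates the contraction-mapping step, and with it the (unexamined) question of whether $[x_c+\epsilon,1]$ is actually $f$-invariant -- it is, but only for $\epsilon < x_f-x_c$, a point the paper glosses over by saying ``for any $\epsilon>0$.'' Second, it unifies what the paper treats as two mechanisms (contraction above $x_c$, escape below $x_c$) into a single one: bounded monotone orbit converges to a fixed point, uniqueness of the fixed-point set $\{0,x_f\}$ identifies the limit, and positivity rules out $0$. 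You correctly flag the only delicate case -- orbits starting near the repelling fixed point $0$ -- and correctly note that monotonicity, not any derivative bound, is what does the work there. One small aside: your formula $f(x)-x = abx(x_f-x)$ is the correct one; the paper's displayed computation carries a stray sign ($abx(x-x_f)$), though its stated conclusion about the fixed-point locations is of course right. The extra rate estimate $|x_{m+1}-x_f|\le (2-a-b)|x_m-x_f|$ on $[x_f,1]$ is a nice addition the paper does not make explicit.
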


\begin{proof} If $x=0$ then all iterates stay at $0$. For any $\epsilon > 0$,
if $x \in [x_c + \epsilon, 1]$ then $f$ is a contraction map, and
the iterates of $x$ converge to $x_f$, the unique non-zero fixed
point. As this holds for all $\epsilon > 0$, we see that the
iterates of any $x\in (x_c,1]$ converge to $x_f$.

We are left with $x\in (0,x_c]$. As $f'(x)$ is always greater than
$1$ on $(0,x_c)$, if $x\in (0,x_c]$ then $f(x) > x$. The proof is
straightforward. By the Mean Value Theorem we have \be f(x)\ =\
f(0) + f'(\xi)x, \ \ \ \xi \in (0,x_c). \ee It is very important
that $\xi \in (0,x_c)$ and not in $[0,x_c]$. The reason is that
$f'(x) > 1$ in $(0,x_c)$ but $f'(x_c) = 1$ (see Lemma
\ref{lem:casen=1firstderivdec01}). As $f(0) = 0$ we have for all
$x\in (0,x_c]$ that \be f(x) \ = \ 0 + f'(\xi)x \ > \ x. \ee If
for some $x\in (0,x_c]$ an iterate is in $(x_c,1]$ then by earlier
arguments the future iterates converge to $x_f$.

Thus we are reduced to the case of an $x\in (0,x_c]$ such that all
iterates stay in $(0,x_c]$. We claim this cannot happen. As this is
a monotonically increasing, bounded sequence, it must converge.
Specifically, fix an $x\in (0,x_c)$. Let $x_1 = f(x)$ and in general
$x_{m+1} = f(x_m)$. Assume all $x_m \in (0,x_c)$ (if ever an $x_m =
x_c$ then $x_{m+1} = f(x_c) > x_c = x_m$ and the claim is clear).
Thus $\{x_m\}$ is a monotonically increasing bounded sequence, and
hence (compactness or the Archimedean property) converges, say to
$\widetilde{x} \le x_c$. By continuity, $\lim_{m\to\infty} x_{m+1} = \lim_{m\to\infty} f(x_m) = f(\lim_{m\to\infty} x_m)$, or $\widetilde{x} = f(\widetilde{x})$. As $\widetilde{x} > 0$, it must equal the unique, non-trivial fixed point, which cannot happen as we are assuming that all iterates are at most $x_c$. Thus some iterate exceeds $x_c$, completing the proof. \end{proof}

%As $f$ is continuous, as $x_m$ converges to $\widetilde{x}$ we must have $f(x_m)$ converges to $f(\widetilde{x})$; in other words, $\lim_{m\to\infty} x_m = \widetilde{x}$ implies $\lim_{m\to\infty} f(x_m) = f(\widetilde{x})$. But $f(x_m) = x_{m+1}$; thus $x_m$ and $f(x_m)$ have the same limit. By regarding the sequence as $x_m$ we see the limit is $\widetilde{x}$; by regarding the sequence as $f(x_m)$ we see the limit is $f(\widetilde{x})$. However, $\widetilde{x} < x_c$, so by the Mean Value Theorem $f(\widetilde{x}) > \widetilde{x}$ (though all we need is that $f(\widetilde{x}) \neq \widetilde{x}$). As the sequence $\{x_m\}$ cannot converge to two distinct numbers, our assumption that $\{x_m\}$ converged to an $\widetilde{x} \in (0,x_c)$ must be false, proving the claim. Thus, if $x \in (0,x_c)$, eventually an iterate of $x$ is at least $x_c$. By our previous analysis, we know that future iterates converge to $x_f$.\end{proof}

\begin{rek} Note the above proof required us to be very careful.
Specifically, we used the fact that $f'(x) > 1$ for $x\in (0,x_c]$
to show that such $x$ are repelled from the fixed point $0$, and
then we used the fact that $f'(x) < 1$ for $x\in (x_c,1]$ to show
such points are attracted by the non-zero fixed point $x_f$.
Arguments of this nature can be generalized.
\end{rek}

%\begin{rek} We could also remark that the point $\widetilde{x}$ would have to be a fixed point, which is impossible. We chose the above proof as it works for one-dimensional generalizations of this problem without requiring knowledge of the locations of fixed points. \end{rek}

%!!!!!!!!!!!!!!!!!!!!!!!!!!!!!!!!!!!!!!!new section!!!!!!!!!!!!!!!!!!!!!!!!!!!!!!!!
%!!!!!!!!!!!!!!!!!!!!!!!!!!!!!!!!!!!!!!!new section!!!!!!!!!!!!!!!!!!!!!!!!!!!!!!!!
%!!!!!!!!!!!!!!!!!!!!!!!!!!!!!!!!!!!!!!!new section!!!!!!!!!!!!!!!!!!!!!!!!!!!!!!!!

\section{Eigenvalue Approach to Fixed Points and Dynamics}\label{sec:eigenvaluefxdet}

We continue the eigenvalue approach of \S\ref{sec:convbsmall} to determining the nature of the fixed points. The following lemma will be useful.

\begin{lem}\label{lem:techlembgtoneminusamatrix}
Let $a, b \in (0,1)$, and set \be A \ = \
\mattwo{a}{nb}{b}{a}. \ee Then the eigenvalues of $A$ are $a + b
\sqrt{n}$, with corresponding eigenvector $\vectwo{\sqrt{n}}{1}$,
and $a - b \sqrt{n}$, with corresponding eigenvector
$\vectwo{-\sqrt{n}}{1}$. We may write any vector $\vectwo{x}{y}$ as
\be \vectwo{x}{y} \ = \ \left(\frac{y}2 + \frac{x}{2\sqrt{n}}\right)
\vectwo{\sqrt{n}}{1} \ + \ \left(\frac{y}2 -
\frac{x}{2\sqrt{n}}\right) \vectwo{-\sqrt{n}}{1}. \ee If $b >
(1-a)/\sqrt{n}$ then $a+b\sqrt{n} > 1$.
\end{lem}

\begin{proof} The above claims follow by direct computation. It is
convenient to write $A$ as \be A \ = \ a I + b\sqrt{n}
\mattwo{0}{\sqrt{n}}{1/\sqrt{n}}{0} \ = \ a I + b \sqrt{n} B, \ee as
the eigenvalues and eigenvectors of $B$ are easily seen by
inspection.
\end{proof}

\begin{rek}
The two eigenvectors are linearly independent, and thus a basis.
Note that any vector $v = \vectwo{x}{y}$ with positive coordinates
will have a non-zero component in the $\vectwo{\sqrt{n}}{1}$
direction. While we were able to explicitly compute the eigenvalues
and eigenvectors here, we will not need the exact values of the
eigenvectors below. From the Perron-Frobenius theorem we know that
the largest (in absolute value) eigenvalue is positive and the
corresponding eigenvector has all positive entries (because all
entries in our matrix are positive).
\end{rek}

\begin{thm} Assume $n \ge 2$, $a,b \in (0,1)$ and $b >
(1-a)/\sqrt{n}$. Then there is a $\rho = \rho(a,b,n) > 0$  such that
if $v = \vectwo{x}{y} \neq \vectwo{0}{0}$ has $||v|| \le \rho$ then
eventually an iterate of $v$ by $f$ is more than $\rho$ units form
the trivial fixed point. In other words, the trivial fixed point is
repelling.
\end{thm}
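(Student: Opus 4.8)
The plan is to linearize the map $f$ at the trivial fixed point and exploit the fact that the relevant Jacobian has a spectral radius exceeding $1$ when $b > (1-a)/\sqrt{n}$. Concretely, Taylor expanding $f$ around $(0,0)$, we have
\be
f\vectwo{x}{y} \ = \ \mattwo{a}{nb}{b}{a}\vectwo{x}{y} \ + \ \vectwo{O(x^2+xy+y^2)}{O(x^2+xy)},
\ee
since $f_1(x,y) = 1-(1-ax)(1-by)^n = ax + nby + (\text{higher order})$ and $f_2(x,y) = 1-(1-ay)(1-bx) = ay+bx-abxy$. By Lemma \ref{lem:techlembgtoneminusamatrix}, the matrix $A = \mattwo{a}{nb}{b}{a}$ has largest eigenvalue $\mu := a + b\sqrt{n}$, which is strictly greater than $1$ precisely under the hypothesis $b > (1-a)/\sqrt{n}$, with positive eigenvector $w := \vectwo{\sqrt{n}}{1}$.

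First I would fix a small $\rho > 0$ and, using the Taylor estimate above, choose it small enough that on the ball $B_\rho = \{v : \|v\| \le \rho\} \cap [0,1]^2$ the remainder terms are dominated by a chosen fraction of the linear part; that is, $f(v) = Av + R(v)$ with $\|R(v)\| \le \varepsilon \|v\|$ for $v \in B_\rho$, where $\varepsilon$ is picked so that $\mu - \varepsilon\kappa > 1$ for an appropriate constant $\kappa$ coming from the change of basis to the eigenbasis of $A$. Next, I would measure the ``size'' of an iterate not by Euclidean norm but by its component $c(v)$ in the dominant eigendirection $w$, using the explicit decomposition from Lemma \ref{lem:techlembgtoneminusamatrix}: writing $v = c(v)\, w + d(v)\, \vectwo{-\sqrt n}{1}$, we have $c\left(\vectwo{x}{y}\right) = \tfrac{y}{2} + \tfrac{x}{2\sqrt n}$. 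The key point is that any nonzero $v$ with nonnegative coordinates has $c(v) > 0$ (indeed $c(v) > 0$ as soon as $x > 0$ or $y > 0$), and since $F$ maps $[0,1]^2$ into itself and, near the origin, keeps coordinates nonnegative, all iterates stay in the positive quadrant. Then $c(Av) = \mu\, c(v)$, so $c(f(v)) \ge \mu\, c(v) - \|R(v)\|/\text{(norm factor)} \ge (\mu - \varepsilon\kappa) c(v) =: \gamma\, c(v)$ with $\gamma > 1$, valid as long as $v$ has stayed in $B_\rho$.

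The conclusion then follows by a standard escape argument: starting from $v \ne 0$ with $\|v\| \le \rho$, the quantity $c(v_t)$ for the iterates $v_t = f^t(v)$ grows at least geometrically with ratio $\gamma > 1$ for as long as the iterates remain in $B_\rho$; since $c(v_0) > 0$, after finitely many steps we would have $c(v_t)$ larger than $\max_{B_\rho} c$, a contradiction, so some iterate must leave $B_\rho$, i.e., be more than $\rho$ units from the origin. The main obstacle I anticipate is the bookkeeping around the nonlinear remainder: one must ensure that the higher-order terms genuinely cannot cancel the geometric growth in the $w$-direction, which requires carefully relating the operator norm bound on $R$ to the coordinate functional $c(\cdot)$ (equivalently, controlling the constant $\kappa$ in terms of $n$), and one must confirm that the iterates do not leave the positive quadrant or the domain $[0,1]^2$ before escaping $B_\rho$ — both of which are handled by taking $\rho$ sufficiently small and using that $f$ preserves $[0,1]^2$ with nonnegative images near $0$.
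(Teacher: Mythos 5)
Your proposal follows essentially the same route as the paper: linearize $f$ at the origin, invoke Lemma \ref{lem:techlembgtoneminusamatrix} to identify $\mu = a + b\sqrt{n} > 1$ with positive eigenvector $w$, and show that as long as iterates remain in a small ball their projection onto the dominant eigendirection grows geometrically, forcing escape. The paper's version does the same thing but tracks the accumulated additive error $E\bigl(\tfrac{\lambda_1^m - 1}{\lambda_1 - 1}Cr^2\bigr)$ over $m$ steps and exhibits an explicit even $m$ where a coordinate of $v_m$ exceeds $4x$, whereas you track the linear functional $c(\cdot)$ directly with a per-step multiplicative bound; both versions hinge on the same fact (which you correctly flag) that $c(v)\asymp\|v\|$ on the positive quadrant, so the comparison constant $\kappa$ stays controlled.
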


\begin{proof} We must show that if
$||v||$ is sufficiently small then there is an $m$ such that
$||f^{m}(v)|| > ||v||$, where $f^{2}(v) = f(f(v))$ and so on.
%Without loss of generality we may assume $x$ and $y$ are both
%non-zero (if exactly one of them vanished, then the first iterate of
%$v$ has both coordinates non-zero).

We have \bea f\left(\vectwo{u}{v}\right)& \ = \ &
\vectwo{1-(1-au)(1-bv)^n}{1-(1-av)(1-bu)} \nonumber\\ & = &
\mattwo{a}{nb}{b}{a} \vectwo{u}{v} +
O_{a,b,n}\left(\vectwo{u^2+v^2}{u^2+v^2}\right). \eea In other
words, there is some constant $C$ (depending on $n, a$ and $b$) such
that the error in replacing $f$ acting on $\vectwo{u}{v}$ by the
linear map $A = \mattwo{a}{nb}{b}{a}$ acting on $\vectwo{u}{v}$ is
at most $C\left|\left|\vectwo{u}{v}\right|\right|^2$. Thus if
$\vectwo{u}{v}$ has small length, the error will be negligible.

To show that eventually an iterate of $v=\vectwo{x}{y}$ is further
from the trivial fixed point than $v$, we argue as follows: we
replace $f$ by $A$, and since one of the eigenvalues is greater than
one eventually an iterate will be further out. The argument is
complicated by the need to do a careful book-keeping, as we must
ensure that the error terms are negligible.

Let $\lambda_1 = a + b\sqrt{n} > 1$ and $\lambda_2 = a - b\sqrt{n}$
(note $|\lambda_2| < \lambda_1$ as we have assumed $a, b > 0$). We
may write $\lambda = 1 + \eta$, with $0 < \eta < \sqrt{n}$. Our goal
is to prove an equation of the form \be f^{(m)}(v) \ = \
\lambda_1^m\left(\frac{y}2 + \frac{x}{2\sqrt{n}}\right)
\vectwo{\sqrt{n}}{1} \ + \ \lambda_2^m \left(\frac{y}2 -
\frac{x}{2\sqrt{n}}\right) \vectwo{-\sqrt{n}}{1} \ + \ {\rm small}.
\ee We often take $m$ even, so that $\lambda_2^m$ is non-negative.
We may write $x = r \cos \theta$ and $y = r \sin \theta$, with $r
\le \rho$ (later we shall determine how large $\rho$ may be).

We introduce some notation. By $E(z)$ we mean a vector
$\vectwo{z_1}{z_2}$ such that $|z_1|, |z_2| \le z$. Let $v_0 = v$
and $v_{k+1} = f(v_k)$. Thus \bea v_1  \ = \ f(v_0) \ = \  A v_0 +
E(Cr^2), \eea as $||v_0||^2 = r^2$; here $E(Cr^2)$ denotes our error
vector, which has components at most $Cr^2$. If $||v_1||
> r$ then we have found an iterate which is further from the trivial
fixed point, and we are done. If not, $||v_1|| \le r$.

Assume $||v_1|| \le r$. Then \bea v_2 \ = \ f(v_1) \ = \ Av_1 +
E(Cr^2). \eea But $Av_1 = Av_0 + A E(Cr^2)$, with $E(Cr^2)$ denoting
a vector with components at most $Cr^2$. As the largest eigenvalue
of $A$ is $\lambda_1$, we have $A E(Cr^2) = E(\lambda_1Cr^2)$. Thus
\be v_2 \ = \ A^2 v_0 + E(\lambda_1Cr^2 + Cr^2). \ee

If $||v_2|| > r$ we are done, so we assume $||v_2|| \le r$. Then
\bea v_3 \ = \ f(v_2) \ = \ Av_2 + E(Cr^2). \eea But $Av_2 = A^3 v_0
+ A E(\lambda_1Cr^2 + Cr^2)$. As \be A E(\lambda_1Cr^2 + Cr^2) \ = \
E(\lambda_1^2 Cr^2 + \lambda_1 Cr^2), \ee we find \be v_3 \ = \ A^3
v_0 + E(\lambda_1^2 Cr^2 + \lambda_1 Cr^2 + Cr^2). \ee

If there is some $m$ such that $||v_m|| > r$ then we are done. If
not, then for all $m$ we have \be v_m \ = \ A^{m} v_0 +
E\left(\sum_{k=0}^{m-1} \lambda_1^k Cr^2\right) \ = \ A^{m} v_0 +
E\left(\frac{\lambda_1^m - 1}{\lambda_1 -1}\cdot Cr^2\right). \ee
Using Lemma \ref{lem:techlembgtoneminusamatrix} (writing $v=v_0$ as
a linear combination of the eigenvectors and applying $A$) yields
\bea v_m & \ = \ & \lambda_1^m\left(\frac{y}2 +
\frac{x}{2\sqrt{n}}\right) \vectwo{\sqrt{n}}{1} \ + \ \lambda_2^m
\left(\frac{y}2 - \frac{x}{2\sqrt{n}}\right) \vectwo{-\sqrt{n}}{1}
\nonumber\\ & & \ \ \ + \ E\left(\frac{\lambda_1^m - 1}{\lambda_1
-1}\cdot Cr^2\right). \eea

We shall consider the case $x \ge y$; the other case follows
similarly. Let $m$ be the smallest even integer such that
$\lambda_1^m \ge 10$; as $\lambda_1 < 1 + \sqrt{n} < 2\sqrt{n}$ we
have for such $m$ that $\lambda_1^m \le 40n$. We consider the
$x$-coordinate of $v_m$. As $m$ is even and $x\ge y$ the
contribution from \be \lambda_1^m\left(\frac{y}2 +
\frac{x}{2\sqrt{n}}\right) \vectwo{\sqrt{n}}{1} \ + \ \lambda_2^m
\left(\frac{y}2 - \frac{x}{2\sqrt{n}}\right)
\vectwo{-\sqrt{n}}{1}\ee is at least $\lambda_1^m \cdot \frac{x
\sqrt{n}}{2\sqrt{n}} \ge 5x$; the contribution from
$E\left(\frac{\lambda_1^m - 1}{\lambda_1 -1}\cdot Cr^2\right)$ is at
most $\frac{\lambda_1^m - 1}{\lambda_1 -1}\cdot Cr^2$ $\le$
$\frac{\lambda_1^m}{\eta} \cdot Cr^2$ $\le$ $\frac{40Crn}{\eta}\cdot
r$. By assumption, $r \le \rho$. Let $\rho < \frac{\eta}{4000Cn}$.
Then the $x$-coordinate of $v_m$ is at least $4x$ (since $x \ge y$,
$x \ge r/\sqrt{2}$). Thus $||v_m||^2 \ge 16x^2 \ge 8(x^2 + y^2) =
8||v||^2 = 8r^2$, which contradicts $||v_m|| \le r$ for all $m$.

If instead $y \ge x$ then the same choices work, the only difference
being that we now look at the $y$-coordinate.
\end{proof}

Numerical exploration suggested the following conjecture (which is Theorem \ref{thm:mainresult}).

\begin{conj} Let $n=2$ and assume $a,b \in (0,1)$ with $b >
(1-a)/\sqrt{n}$. The map $f$ is a contraction map in a sufficiently
small neighborhood of the unique non-trivial valid fixed point $v_f
=\vectwo{x_f}{y_f}$. Thus, if $v = \vectwo{x}{y}$ is sufficiently
close to $v_f$, then the iterates of $v$ converge to $v_f$.
\end{conj}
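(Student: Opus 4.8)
The plan is to prove the stronger local statement by linearization: show that the Jacobian of $f$ at the non-trivial fixed point $v_f=\vectwo{x_f}{y_f}$ has spectral radius strictly below $1$, and then pass to a genuine contraction on a neighbourhood in an adapted norm. Write $P=1-ax_f$, $Q=1-by_f$, $R=1-ay_f$, $S=1-bx_f$, all of which lie in $(0,1)$ since $0<x_f,y_f<1$. Differentiating \eqref{eq:defnofmapF} gives
\[
Df(v_f)\ =\ \mattwo{aQ^n}{nbPQ^{n-1}}{bR}{aS}.
\]
This matrix has nonnegative entries, strictly positive off-diagonal entries, trace $T=a(Q^n+S)\le 2a<2$, and strictly positive discriminant $a^2(Q^n-S)^2+4nb^2PRQ^{n-1}$; hence its eigenvalues $\lambda_1\ge\lambda_2$ are real, and $|\lambda_2|\le\lambda_1$ because $T\ge 0$. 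So it suffices to prove $\lambda_1<1$. Writing $p(\lambda)=\lambda^2-T\lambda+\det Df(v_f)$ for the characteristic polynomial and using $T<2$, the condition $\lambda_1<1$ is equivalent to $p(1)>0$.

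The heart of the argument is to evaluate $p(1)=(1-aQ^n)(1-aS)-nb^2PRQ^{n-1}$ using the fixed-point equations. From $PQ^n=1-x_f$ and $RS=1-y_f$ one extracts the clean identities $1-aQ^n=(1-a)/P$ and $1-aS=(1-a)/R$, so $p(1)>0$ is equivalent to $(1-a)^2>nb^2(PR)^2Q^{n-1}$. I would then feed in the relations coming from the partial-fixed-point curves of \S\ref{sec:detfixedpointsF} at $v_f$: the curve $\phi_2$ gives $bRx_f=(1-a)y_f$, and $\phi_1$ gives $(1-a)x_f=Pby_f\sum_{k=0}^{n-1}Q^k$ (using $1-Q=by_f$ and $1-aQ^n=(1-a)/P$). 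Eliminating $x_f$ between these two identities yields $PRb^2\sum_{k=0}^{n-1}Q^k=(1-a)^2$, after which $(1-a)^2>nb^2(PR)^2Q^{n-1}$ collapses to
\[
b^2\Big(\sum_{k=0}^{n-1}Q^k\Big)^2\ >\ n(1-a)^2Q^{n-1}.
\]
This is now immediate: by AM--GM, $\sum_{k=0}^{n-1}Q^k\ge nQ^{(n-1)/2}$, so the left side exceeds $\tfrac{(1-a)^2}{n}\cdot n^2Q^{n-1}=n(1-a)^2Q^{n-1}$, with strictness supplied by the hypothesis $b^2>(1-a)^2/n$ (and also visible from $Q<1$, i.e.\ $v_f$ non-trivial). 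For $n=2$ the AM--GM step is simply $(1+Q)^2\ge 4Q$; the same computation in fact works verbatim for every $n\ge 1$.

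With $\rho\big(Df(v_f)\big)=\lambda_1<1$ in hand, I would finish in the standard way: choose $\varepsilon>0$ with $\lambda_1+\varepsilon<1$ and a norm $\|\cdot\|_\ast$ on $\R^2$, equivalent to the Euclidean norm, for which $\|Df(v_f)\|_\ast\le\lambda_1+\varepsilon$. By continuity of $v\mapsto Df(v)$ there is a closed ball $U$ about $v_f$ on which $\|Df\|_\ast\le c$ for some $c<1$; integrating $Df$ along segments in the convex set $U$ gives $\|f(v)-f(w)\|_\ast\le c\|v-w\|_\ast$ for all $v,w\in U$, and since $f(v_f)=v_f$ this also forces $f(U)\subseteq U$. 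Hence $f$ is a contraction on $U$, and every $v\in U$ iterates to $v_f$ by the Banach fixed point theorem.

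The main obstacle is the middle step: doing the algebra carefully enough that $p(1)>0$ collapses to the transparent inequality $b^2\big(\sum_{k=0}^{n-1}Q^k\big)^2>n(1-a)^2Q^{n-1}$ — once in that form, the threshold hypothesis together with AM--GM make the conclusion instant (and it gives the result for all $n\ge1$, not just $n=2$). A secondary point to handle with care is that $\rho(Df(v_f))<1$ does not by itself yield contraction in the Euclidean norm, which is exactly why the adapted-norm detour in the last step is required. Note that this argument recovers the local part of Theorem~\ref{thm:mainresult}, II(b) via the eigenvalue method of \S\ref{sec:convbsmall} rather than the region analysis of \S\ref{sec:convblarge}.
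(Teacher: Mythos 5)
Your argument is correct, and it succeeds precisely where the paper's own Appendix B gave up: the authors tried the same linearize-at-$v_f$ strategy, but attempted to substitute the closed-form expression for $x_f$ (which they call ``a nightmare'' even for $n=2$) and consequently only proved contraction for $a,b$ near $1$, recording the general case as the conjecture you were handed. Your key move---never solving for $(x_f,y_f)$ explicitly, but instead feeding the implicit identities $1-aQ^n=(1-a)/P$ and $1-aS=(1-a)/R$, together with the two partial-fixed-point relations, into $p(1)$---is exactly what makes the algebra collapse. I verified the chain: $p(1)=(1-a)^2/(PR)-nb^2PRQ^{n-1}$; eliminating $x_f$ between $bRx_f=(1-a)y_f$ and $(1-a)x_f=Pby_f\sum_{k=0}^{n-1}Q^k$ gives $(1-a)^2=PRb^2\sum_{k=0}^{n-1}Q^k$; substituting reduces $p(1)>0$ to $b^2\bigl(\sum_{k=0}^{n-1}Q^k\bigr)^2>n(1-a)^2Q^{n-1}$, and AM--GM together with the threshold $nb^2>(1-a)^2$ finish it (strictness is supplied either by $Q<1$ or by the strict threshold). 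The $T<2$ reduction of $\lambda_1<1$ to $p(1)>0$ and the adapted-norm closing are both sound. The paper establishes the conjecture---indeed the stronger global statement---by a wholly different method, the monotone region/rectangle analysis of Section 4, and explicitly remarks that the eigenvalue route was abandoned. What each buys: the region argument yields global attraction from every non-trivial initial state, which a purely local linearization cannot reach on its own; your argument gives the clean quantitative fact $\rho\bigl(Df(v_f)\bigr)<1$, valid for all $n\ge1$, which the authors wanted but could not extract, and which is the natural starting point for a rate-of-convergence estimate. The two complement each other: the region argument pulls every non-trivial orbit into a small ball around $v_f$, where your linearization then yields geometric convergence at rate essentially $\rho\bigl(Df(v_f)\bigr)$.
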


While the eigenvalue approach is unable to prove the above, other techniques fared better (and in the main body of the paper we proved this by geometric arguments involving partial fixed points). Unfortunately the
linear approximation of $f$  near the non-trivial valid fixed point
$v_f$ is a horrible mess, involving numerous complicated expressions
of $a$ and $b$. While we can clean it up a bit,
it is not enough to get something which is algebraically transparent.

When $n=2$ we have \be y_f \ = \ \frac{bx_f}{1-a+abx_f}, \ \ \ x_f \
= \ \frac{(1-a)y_f}{b(1-ay_f)}. \ee Using
$f\left(\vectwo{x_f}{y_f}\right) = \vectwo{x_f}{y_f}$ yields \be
(1-bx_f) \ = \ \frac{1-y_f}{1-ay_f}, \ \ \ (1-by_f)^2 \ = \
\frac{1-x_f}{1-ax_f}. \ee These relations can help simplify some of
the formulas; the problem is the formula for $x_f$ in terms of $a$
and $b$ is a nightmare (and remember this is the `simple' case of $n=2$!): \bea x_f \ = \ \frac{2a^3 + b^3 - 2 a^2 (2 + b) + a (2 + 2 b - 2 b^2) - b\sqrt{b^4 + 4 a (1-b) (a-1-b)^2}}{2 a b
(a^2 + b^2 - a (1 + 2 b))}. \nonumber\\ \eea

The resulting fixed point matrix is \be A_f \ = \
\mattwo{a(1-by_f)^2}{2b(1-axy_f)(1-by_f)}{b(1-ay_f)}{a(1-bx_f)}. \ee
We want to show the largest eigenvalue is less than 1 in absolute
value when $b > (1-a)/\sqrt{2}$.

We know that the critical line is $b = (1-a)/\sqrt{2} = 1/\sqrt{2} -
a/\sqrt{2}$. A good way to numerically investigate the
eigenvalues of $A_f$ is study the eigenvalues along the line $b =
(m-a)/\sqrt{2}$, with $1 < m < 1 + \sqrt{2}$. This gives us a family
of parallel lines. For a given (valid) choice of $m$, we have
$\max(0, m - \sqrt{2}) < a < 1$. Below (Figures \ref{fig:plot1}
through \ref{fig:plot5}) is an illustrative set of plots of the
largest eigenvalue for 5 different choices of $m$.

\begin{figure}[ht]
\begin{center}
\scalebox{.75}{\includegraphics{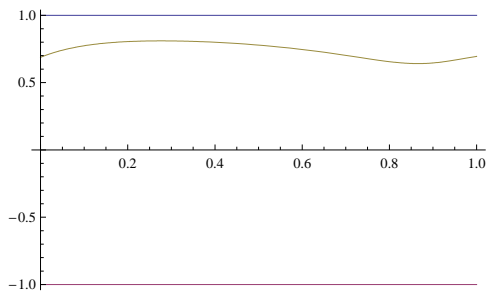}}
\caption{\label{fig:plot1} Distribution of the largest eigenvalue of
$A_f$ along the line $b = (m-a)/\sqrt{2}$, with $m = 1 + \sqrt{2}/6
\approx 1.2357$.}
\end{center}\end{figure}

\begin{figure}[ht]
\begin{center}
\scalebox{.75}{\includegraphics{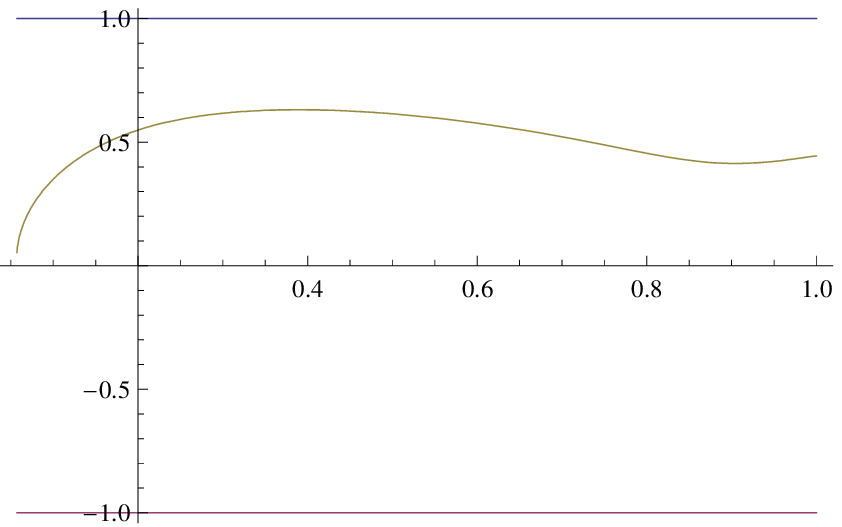}}
\caption{\label{fig:plot2} Distribution of the largest eigenvalue of
$A_f$ along the line $b = (m-a)/\sqrt{2}$, with $m = 1 + 2\sqrt{2}/6
\approx 1.4714$.}
\end{center}\end{figure}

\begin{figure}[ht]
\begin{center}
\scalebox{.75}{\includegraphics{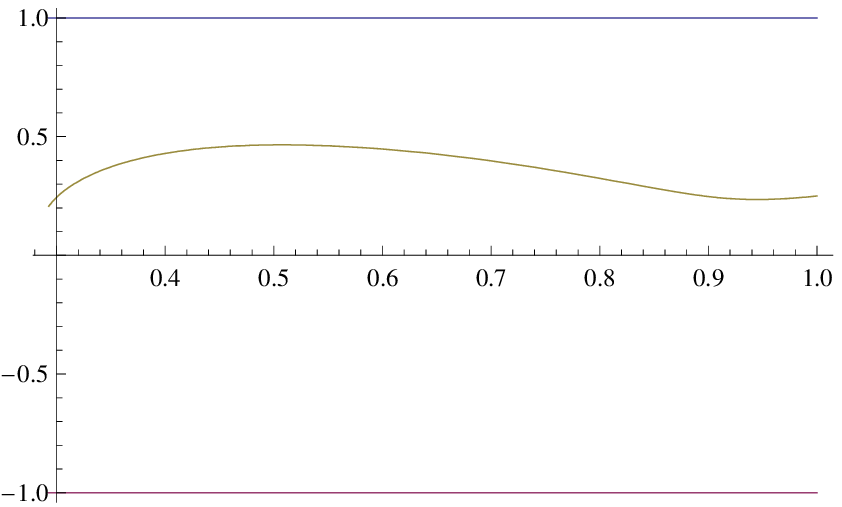}}
\caption{\label{fig:plot3} Distribution of the largest eigenvalue of
$A_f$ along the line $b = (m-a)/\sqrt{2}$, with $m = 1 + 3\sqrt{2}/6
\approx 1.7071$.}
\end{center}\end{figure}

\begin{figure}[ht]
\begin{center}
\scalebox{.75}{\includegraphics{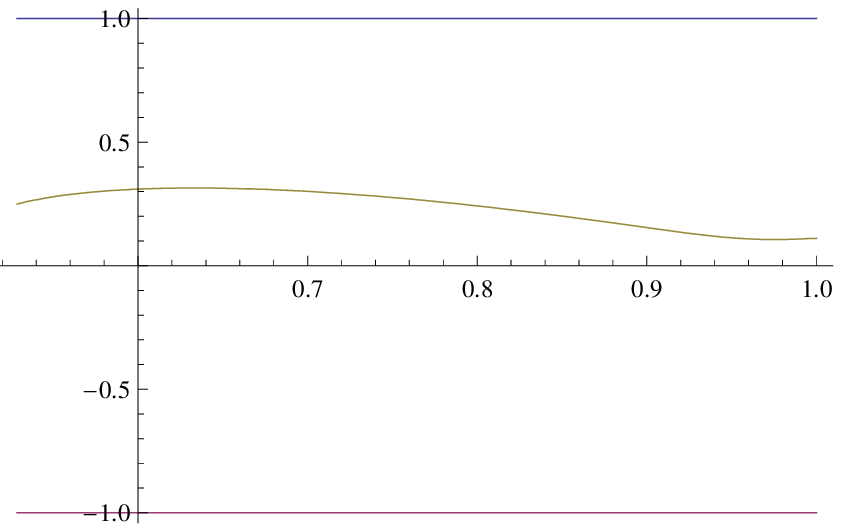}}
\caption{\label{fig:plot4} Distribution of the largest eigenvalue of
$A_f$ along the line $b = (m-a)/\sqrt{2}$, with $m = 1 + 4\sqrt{2}/6
\approx 1.9428$.}
\end{center}\end{figure}

\begin{figure}[ht]
\begin{center}
\scalebox{.75}{\includegraphics{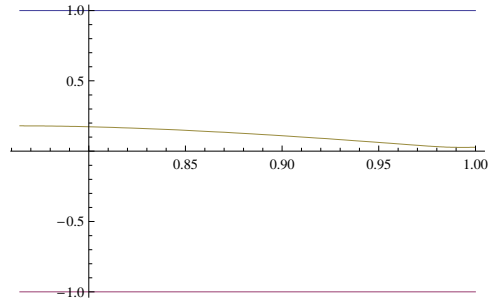}}
\caption{\label{fig:plot5} Distribution of the largest eigenvalue of
$A_f$ along the line $b = (m-a)/\sqrt{2}$, with $m = 1 + 5\sqrt{2}/6
\approx 2.1785$.}
\end{center}\end{figure}

It is crucial that $m > 1$, as $m=1$ leads to a coalescing of fixed
points (i.e., we have the trivial fixed point with multiplicity two,
and the third fixed point is not valid). In Figure \ref{fig:plot6}
we plot the behavior of $1 - \lambda_1(a,1 - \sqrt{2}/100)$, where
$\lambda_1(a,b)$ is the largest eigenvalue of $A_f$. Note that the
largest eigenvalue is very close to 1, but always less than 1, for
this value of $m$.

\begin{figure}[ht]
\begin{center}
\scalebox{.75}{\includegraphics{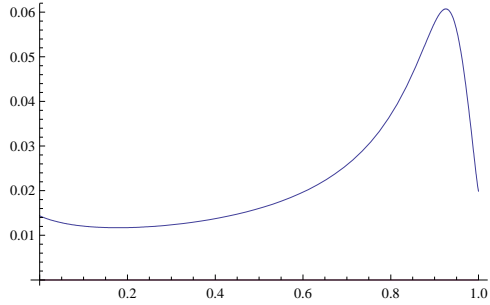}}
\caption{\label{fig:plot6} Distribution of 1 minus the largest
eigenvalue of $A_f$ along the line $b = (m-a)/\sqrt{2}$, with $m = 1
+ \sqrt{2}/100 \approx 1.0141$.}
\end{center}\end{figure}

Note in Figure \ref{fig:plot6} that $\lambda_1$ is small, especially
for large $a$. This indicates that perhaps when $a$ is close to 1
and $b = (m-a)/\sqrt{2}$ that there is a hope of proving the largest
eigenvalue is strictly less than $1$.

In fact, it is easy to show that if $a$ and $b$ are close to $1$,
then $x_f$ is close to 1 as well (which immediately implies that
$y_f$ is also close to $1$). This implies that the entries of $A_f$
are all positive numbers close to 0. A simple calculation shows \bea
\lambda_1(a,b)&\ =\ & \frac{ \left((1-by_f)^2+(1-ax_f)\right)a}{2}
\nonumber\\ & & \ \ \ +\
\frac{\sqrt{\left((1-by_f)^2-(1-ax_f)\right)a^2 +
8b^2(1-by_f)(1-ax_f)(1-ay_f)}}{2}.\nonumber\\ \eea If $a,b,x_f$ and
$y_f$ are all close to $1$, then $\lambda_1(a,b)$ will be small. We
have shown

\begin{lem} Let $n=2$, $a,b \in (0,1)$ and assume $b >
(1-a)/\sqrt{2}$. Then if $a$ and $b$ are sufficiently large, then
$f$ is a contraction map near the non-trivial valid fixed point
(i.e., the non-trivial valid fixed point is attracting). \end{lem}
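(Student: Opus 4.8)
The plan is to show that once $a$ and $b$ are close enough to $1$ the Jacobian $DF$ has small operator norm throughout a whole neighborhood of the non-trivial fixed point $(x_f,y_f)$, so that $F$ is genuinely a contraction there in the Euclidean norm; local attraction of $(x_f,y_f)$ then follows from the mean value inequality along segments, exactly as in \S\ref{sec:convbsmall}. The engine is the heuristic already flagged in the text: as $(a,b)\to(1,1)$ within the valid region $b>(1-a)/\sqrt{2}$ one has $x_f\to 1$ and $y_f\to 1$, and this drives every entry of
\[
A_f \ = \ DF(x_f,y_f) \ = \ \mattwo{a(1-by_f)^2}{2b(1-ax_f)(1-by_f)}{b(1-ay_f)}{a(1-bx_f)}
\]
to $0$.

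The first step — and the crux — is to prove $x_f\to 1$ (whence $y_f=bx_f/(1-a+abx_f)\to 1$ automatically). I would argue from the fixed-point identities recorded above,
\[
1-x_f \ = \ (1-ax_f)(1-by_f)^2, \qquad 1-y_f \ = \ (1-ay_f)(1-bx_f).
\]
The second gives $y_f\ge bx_f$, hence $1-by_f\le 1-b^2x_f$; substituting into the first yields $1-x_f\le(1-b^2x_f)^2$. Writing $u=1-x_f\in(0,1)$ and $\varepsilon=1-b^2$, this reads $u\le(u+\varepsilon(1-u))^2\le(u+\varepsilon)^2$, i.e. $u(1-u)\le 2\varepsilon+\varepsilon^2$. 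So for $b$ near $1$ the value $x_f$ is pinched either near $1$ or near $0$; the branch $x_f\approx 0$ is excluded because, by Theorem~\ref{thm:mainresult}, II(a), $(x_f,y_f)$ is the \emph{second} crossing of $\phi_1$ and $\phi_2$ (which both start at the origin with $\phi_1$ initially above $\phi_2$), so $x_f$ stays bounded away from $0$ uniformly for $(a,b)$ near $(1,1)$. Alternatively one may pass to the limit $(a,b)\to(1,1)$ in the closed form for $x_f$ quoted above: its numerator tends to $-2$ and its denominator $2ab(a^2+b^2-a(1+2b))$ tends to $-2\neq 0$, so $x_f\to 1$ by continuity.

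Given $x_f,y_f\to 1$ and $a,b\to 1$, each of $1-by_f$, $1-ax_f$, $1-ay_f$, $1-bx_f$ tends to $0$, so all four entries of $A_f$ do as well. Since $DF$ is continuous on $[0,1]^2$, there is a $\delta_0\in(0,1)$ such that, once $a,b>1-\delta_0$, we may pick a convex ball $B\subset[0,1]^2$ about $(x_f,y_f)$ on which every entry of $DF$ is at most $1/4$, whence $\|DF(p)\|_{\mathrm{op}}\le\|DF(p)\|_{F}\le 1/2$ for all $p\in B$. Integrating $DF$ along the segment from $p$ to $q$ (both in $B$) gives $\|F(p)-F(q)\|\le\tfrac12\|p-q\|$, so $F$ is a $\tfrac12$-contraction on $B$; as $F(x_f,y_f)=(x_f,y_f)$, every orbit starting in $B$ converges to $(x_f,y_f)$, which is the claim. (Note one does not even need the eigenvalue formula for $\lambda_1(a,b)$ here — smallness of the entries of $A_f$ already forces a genuine Euclidean-norm contraction.)

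The hardest part is the first step, specifically ruling out the spurious branch $x_f\to 0$. Both routes above handle it, but each needs a little care: the soft route requires making "the second crossing stays away from the origin" quantitative and uniform in $(a,b)$ near $(1,1)$, while the closed-form route requires checking the denominator of the formula for $x_f$ does not vanish near $(a,b)=(1,1)$ (it does not, tending to $-2$). Everything downstream of $x_f,y_f\to 1$ is a routine continuity-plus-mean-value argument.
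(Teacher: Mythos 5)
Your proposal is correct and follows the same outline the paper sketches: send $(a,b)\to(1,1)$, show that $(x_f,y_f)\to(1,1)$ so that every entry of $A_f=DF(x_f,y_f)$ tends to zero, and conclude local contraction. Where you differ --- for the better --- is in two places. First, the paper simply asserts that it is easy to show $x_f$ is close to $1$ when $a,b$ are close to $1$, without giving a proof; you actually establish it, either via the elementary pinching $u(1-u)\le 2\varepsilon+\varepsilon^2$ (with $u=1-x_f$, $\varepsilon=1-b^2$) extracted from the two fixed-point identities together with an exclusion of the spurious branch $x_f\to 0$, or, more decisively, by passing to the limit in the displayed closed form for $x_f$ and checking that both numerator and denominator tend to $-2$. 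Second, the paper's final step infers contraction from smallness of $\lambda_1(a,b)$; this implicitly leans on the bound $\|Mv\|\le|\lambda_{\max}|\,\|v\|$ invoked earlier in the text, which is false for general non-normal matrices and would need separate justification for the non-symmetric $A_f$. Your substitution --- bound every entry of $DF$ by $1/4$ on a convex neighborhood, deduce $\|DF\|_{\mathrm{op}}\le\|DF\|_{F}\le 1/2$, and integrate along segments --- sidesteps that gap entirely and delivers a genuine Euclidean-norm contraction without ever invoking the eigenvalue formula, so your version is not only correct but more rigorous than the one in the appendix.
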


With some work, using this method we can determine how `close' $a$ and $b$ need to be
to $1$. With computer assistance, we can partition $a$ and $b$ space, numerically compute the fixed points and eigenvalues, and by doing a sensitivity of parameters analysis prove the theorem.

%%%%%%%%%%%%%%%%%%%%%%%%%%%%%%%%%%%%%%%%%%%%%%%%%%%%%%%%%%%%%%%%%%%%%%%%%%%%%%%%%%%%%%%%%%%%%%%%%%%%%%%%%%%%%%%%%%%%%%%%%%%%%%%%%%%%%%%%
%%%%%%%%%%%%%%%%%%%%%%%%%%%%%%%%%%%%%%%%%%%%%%%%%%%%%%%%%%%%%%%%%%%%%%%%%%%%%%%%%%%%%%%%%%%%%%%%%%%%%%%%%%%%%%%%%%%%%%%%%%%%%%%%%%%%%%%%

\section{Injectivity and Topological Arguments}

One approach to this problem is to use topological arguments as a way of showing a contraction mapping and thus convergence to a unique non-trivial fixed point. Many of these arguments are facilitated by the map being injective; unfortunately, our map is only injective for some values of $a$, $b$ and $n$. In the injective cases, we can use results from topology to obtain many useful results. While these are not used in the proof of our main theorem, we include them as they may assist future researchers in studying related questions. As these cannot lead to a complete proof in general, our goal is more an exposition of these ideas then including full details.

Given that we have injectivity in certain special cases, we can analyze the dynamical behavior by using results from topology. In the special case with injectivity we can study our map on simple closed curves. This gives us the crucial property that our function maps the interior points of a simple closed curve to the interior of the image of the curve, and exterior points to the exterior. We constantly use the fact that there is a unique, non-trivial fixed point.

The presence of the trivial fixed point at $(0,0)$ causes some complications. To simplify the analysis, instead of letting $C_0$ denote the boundary of the unit square we replace the corner near $(0,0)$ with a semicircular arc from $(0,\epsilon)$ to $(\epsilon,0)$. We let $C_{n+1} = f(C_n)$, and note that $C_{n+1}$ is entirely contained in the interior of $C_n$. For $C_1$, this follows from direct computation; for $n > 2$ it follows from our injectivity assumption. As the fixed point is contained inside $C_0$, the fixed point is inside $C_n$ for all $n$ (it is the only fixed part of the interior and does not move on iteration, and thus always remains interior to every curve). This allows us to reducing the proof that all points iterate to the non-trivial fixed point to showing the sequence of boundary curves $C_n$ iterate to the fixed point.

We want to prove that the limit of $C_n$ is just a point. Unfortunately, the analysis is complicated by the fact that it is possible for the boundaries to always contract but not converge to a point. We discuss several of the potential obstructions; many of these can be eliminated by using more detailed properties of our map.

If we first assume that the image of \emph{every} curve is strictly contained in the curve, then standard arguments prove that $C_n$ converges to the non-trivial fixed point. Consider, instead, the following map. It is easier to record what happens to the radius and the angle then the point. For simplicity, we assume the non-trivial fixed point is at $(0,0)$. Given a point $(x,y)$, we write it as $(r,\theta)$. Let \be \twocase{(r,\theta)\  \longrightarrow \ }{\left(1 + \frac{r-1}{2},\theta + r\sqrt{2}\right)}{if $r \ge 1$}{\left(r + r\frac{1-r}2, \theta+r\sqrt{2}\right)}{if $r \le 1$.} \ee This is an interesting map; the origin is fixed, but all other points eventually iterate to the boundary of the unit circle. The origin is the only fixed point (the $r\sqrt{2}$ essentially gives us a rotating circle). Of course, this map violates many of the properties of our map, in particular it is not a polynomial map; however, it does have the property that all boundary curves of the region contract but do not converge to the fixed point.

The example above thus tells us that the analysis of the dynamical behavior must crucially use properties of our map, and cannot follow from general topological facts about continuous maps. Remember that the functions $\phi_1$ and $\phi_2$ divide the outer boundary of our square into four sub-regions, which are our Regions I-IV. We know that Regions I and III (save for the trivial fixed point) converge to the fixed point after successive iteration and always remain inside themselves. If any part of Region IV iterates into Regions I or III, it will converge to the fixed point, so we are not concerned with that aspect of its behavior. The difficulty is when part of Region IV iterates to Region II. Since interior and exterior cannot occupy the same space because these are all simple closed curves, all of Region I, III, and IV, would flip to outside Region IV (to see this, we separate Regions I, III and IV into one closed curve and Region II into another). Unfortunately, this leads to a complicated analysis where we start asking how many times we can have iterates of a point in IV in IV before entering II; because of these technicalities, we turned to other approaches. The interested reader can contact the authors for additional maps and examples.

%%%%%%%%%%%%%%%%%%%%%%%%%%%%%%%%%%%%%%%%%%%%%%%%%%%%%%%%%%%%%%%%%%%%%%%%%%%%%%%%%%%%%%%%%%%%%%%%%%%%%%%%%%%%%%%%%%%%%%%%%%%%%%%%%%%%%%%%
%%%%%%%%%%%%%%%%%%%%%%%%%%%%%%%%%%%%%%%%%%%%%%%%%%%%%%%%%%%%%%%%%%%%%%%%%%%%%%%%%%%%%%%%%%%%%%%%%%%%%%%%%%%%%%%%%%%%%%%%%%%%%%%%%%%%%%%%

\ \\

\end{document}